\documentclass[reqno]{amsart}
\usepackage{hyperref}

\AtBeginDocument{{\noindent\small
\emph{}}
\vspace{8mm}}
\begin{document}
\title[\hfil fractional Schr\"{o}dinger-Poisson system]
{Concentration of bound states for fractional Schr\"{o}dinger-Poisson system via penalization methods}

\author[K. M. Teng ]
{Kaimin Teng}  
\address{Kaimin Teng (Corresponding Author)\newline
Department of Mathematics, Taiyuan
University of Technology, Taiyuan, Shanxi 030024, P. R. China}
\email{tengkaimin2013@163.com}

\subjclass[2010]{35B38, 35R11}
\keywords{Fractional Schr\"{o}dinger-Poisson system; concentration; bound state; penalization method.}

\begin{abstract}
In this paper, we study the following fractional Schr\"{o}dinger-Poisson system
\begin{equation*}
\left\{
  \begin{array}{ll}
    \varepsilon^{2s}(-\Delta)^su+V(x)u+\phi u=g(u) & \hbox{in $\mathbb{R}^3$,} \\
     \varepsilon^{2t}(-\Delta)^t\phi=u^2,\,\, u>0& \hbox{in $\mathbb{R}^3$,}
  \end{array}
\right.
\end{equation*}
where $s,t\in(0,1)$, $\varepsilon>0$ is a small parameter. Under some local assumptions on $V(x)$ and suitable assumptions on the nonlinearity $g$, we construct a family of positive solutions $u_{\varepsilon}\in H_{\varepsilon}$ which concentrates around the global minima of $V(x)$ as $\varepsilon\rightarrow0$.
\end{abstract}

\maketitle
\numberwithin{equation}{section}
\newtheorem{theorem}{Theorem}[section]
\newtheorem{lemma}[theorem]{Lemma}
\newtheorem{definition}[theorem]{Definition}
\newtheorem{remark}[theorem]{Remark}
\newtheorem{proposition}[theorem]{Proposition}
\newtheorem{corollary}[theorem]{Corollary}
\allowdisplaybreaks

\section{Introduction}
In this paper, we study the following fractional Schr\"{o}dinger-Poisson system
\begin{equation}\label{main}
\left\{
  \begin{array}{ll}
    \varepsilon^{2s}(-\Delta)^su+V(x)u+\phi u=g(u) & \hbox{in $\mathbb{R}^3$,} \\
     \varepsilon^{2t}(-\Delta)^t\phi=u^2,\,\, u>0& \hbox{in $\mathbb{R}^3$,}
  \end{array}
\right.
\end{equation}
where $s,t\in(0,1)$, $\varepsilon>0$ is a small parameter. The potential $V:\mathbb{R}^3\rightarrow\mathbb{R}$ is a bounded continuous function satisfying\\
$(V_0)$ $\inf\limits_{x\in\mathbb{R}^3}V(x)=V_0>0$;\\
$(V_1)$ There is a bounded domain $\Lambda\subset\mathbb{R}^3$ such that
\begin{equation*}
V_0<\min_{\partial\Lambda}V(x),\quad \mathcal{M}=\{x\in\Lambda\,\,|\,\, V(x)=V_0\}\neq{\O}.
\end{equation*}
Without of loss of generality, we may assume that $0\in\mathcal{M}$. The nonlinearity $g:\mathbb{R}\rightarrow\mathbb{R}$ is of $C^1$-class function satisfying\\
$(g_0)$ $\lim\limits_{\tau\rightarrow0^{+}}\frac{g(\tau)}{\tau}=0$;\\
$(g_1)$ $\lim\limits_{\tau\rightarrow+\infty}\frac{g'(\tau)}{\tau^{2_s^{\ast}-2}}=0$, where $2_s^{\ast}=\frac{6}{3-2s}$;\\
$(g_2)$ there exists $\lambda>0$ such that $g(\tau)\geq\lambda \tau^{q-1}$ for some $\frac{4s+2t}{s+t}<q<2_s^{\ast}$ and all $\tau\geq0$;\\
$(g_3)$ $\frac{g(\tau)}{\tau^{q-1}}$ is non-decreasing in $\tau\in (0,+\infty)$.\\
Since we are looking for positive solutions, we may assume that $g(\tau)=0$ for $\tau<0$.
The non-local operator $(-\Delta)^s$ ($s\in(0,1)$), which is called fractional Laplacian operator, can be defined by
\begin{equation*}
(-\Delta)^su(x)=C_s\,P.V.\int_{\mathbb{R}^3}\frac{u(x)-u(y)}{|x-y|^{3+2s}}\,{\rm d}y=C_s\lim_{\varepsilon\rightarrow0}\int_{\mathbb{R}^3\backslash B_{\varepsilon}(x)}\frac{u(x)-u(y)}{|x-y|^{3+2s}}\,{\rm d}y
\end{equation*}
for $u\in\mathcal{S}(\mathbb{R}^3)$, where $\mathcal{S}(\mathbb{R}^3)$ is the Schwartz space of rapidly decaying $C^{\infty}$ function, $B_{\varepsilon}(x)$ denote an open ball of radius $r$ centered at $x$ and the normalization constant $C_s=\Big(\int_{\mathbb{R}^3}\frac{1-\cos(\zeta_1)}{|\zeta|^{3+2s}}\,{\rm d}\zeta\Big)^{-1}$. For $u\in\mathcal{S}(\mathbb{R}^3)$, the fractional Laplace operator $(-\Delta)^s$ can be expressed as an inverse Fourier transform
\begin{equation*}
(-\Delta)^su=\mathcal{F}^{-1}\Big((2\pi|\xi|)^{2s}\mathcal{F}u(\xi)\Big),
\end{equation*}
where $\mathcal{F}$ and $\mathcal{F}^{-1}$ denote the Fourier transform and inverse transform, respectively. If $u$ is sufficiently smooth, it is known that (see \cite{NPV}) it is equivalent to
\begin{equation*}
(-\Delta)^su(x)=-\frac{C_s}{2}\int_{\mathbb{R}^3}\frac{u(x+y)+u(x-y)-2u(x)}{|x-y|^{3+2s}}\,{\rm d}y.
\end{equation*}
By a classical solution of \eqref{main}, we mean two continuous functions $u$ and $\phi$ that $(-\Delta)^su$ and $(-\Delta)^t\phi$ are well defined for all $x\in\mathbb{R}^3$ and satisfy \eqref{main} in a pointwise sense.

In recent years, much attention has been given to nonlocal problems driven by the fractional Laplace operator. This operator naturally arises in many physical phenomena, such as: fractional quantum mechanics \cite{La,La1}, anomalous diffusion \cite{MK}, financial \cite{CT}, obstacle problems \cite{S}, conformal geometry and minimal surfaces \cite{CM}. It also provides a simple model to describe certain jump L\'{e}vy processes in probability theory \cite{BV}. One powerful approach is to use the harmonic extension method developed by Caffarelli and Silvestre \cite{CS}, and this extension method can transform a given nonlocal equation into a degenerate elliptic problem in the half-space with a nonlinear Neumann boundary condition, we refer to interesting readers to see the related works \cite{AM,BCPS,BV,CW,DDW,FLS,Teng-2} and so on. Another approach is that directly investigating the problems in the space $H^s(\mathbb{R}^3)$, the related works can be referred to see \cite{BV,CW,DPDV,Sec,SZ,Teng-1,Teng,Teng1,Teng2,Teng3} and so on. If one chooses the second line, since the fractional Laplacian $(-\Delta)^s$ is a nonlocal operator, more accurate estimates are needed usually.

Formally, system \eqref{main} is regarded as the associated fractional version of the following classical Schr\"{o}dinger-Poisson system
\begin{equation}\label{main-1}
\left\{
  \begin{array}{ll}
    -\varepsilon^2\Delta u+V(x)u+\phi u=g(x,u) & \hbox{in $\mathbb{R}^3$,} \\
    -\varepsilon^2\Delta \phi=u^2& \hbox{in $\mathbb{R}^3$.}
  \end{array}
\right.
\end{equation}
It is well known that system \eqref{main-1} has a strong physical meaning because it appears in semiconductor theory \cite{MRS}. In particular, systems like (1.2) have been introduced in \cite{BF} as a model to describe solitary waves. In \eqref{main-1}, the first equation is a nonlinear stationary equation (where the nonlinear term simulates the interaction between many particles) that is coupled with a Poisson equation, to be satisfied by $\phi$, meaning
that the potential is determined by the charge of the wave function. For this reason, \eqref{main-1} is referred to as a nonlinear Schr\"{o}dinger-Poisson system.

In recent years, there has been increasing attention to systems like \eqref{main-1} when $0<\varepsilon\leq1$ on the existence of positive solutions, ground state solutions, multiple solutions and semiclassical states; see for examples \cite{AR,BF,Ruiz,Ruiz1,WTXZ} and the references therein. Regarding the concentration phenomenon of solutions for Schr\"{o}dinger-Poisson systems like \eqref{main-1}, there has been the object of interest for many authors. Ruiz and Vaira \cite{RV} proved the existence of multi-bump solutions of system
\begin{equation}\label{main-1-1}
\left\{
  \begin{array}{ll}
    -\varepsilon^2\Delta u+V(x)u+\phi u=u^p & \hbox{in $\mathbb{R}^3$,} \\
    -\Delta \phi=u^2& \hbox{in $\mathbb{R}^3$}
  \end{array}
\right.
\end{equation}
for $p\in(1,5)$ and these bumps concentrate around a local minimum of the potential $V$, through using the singular perturbed methods based on a Lyapunov-Schmidt reduction. We refer the interesting readers to see \cite{IV,IV1,S} and the references therein. When $V(x)$ satisfies
\begin{equation}\label{equ1-1}
0<V_0=\inf_{\mathbb{R}^N}V(x)<V_{\infty}:=\liminf_{|x|\rightarrow\infty}V\in(0,+\infty]
\end{equation}
and $g(x,u)=f(u)\in C^1(\mathbb{R}^3)$ verifying
\begin{equation}\label{equ1-2}
\left\{
  \begin{array}{ll}
     & \hbox{$(i)$ $f(t)=o(t^3)$, $f'(s)s^2-3f(s)s\geq Cs^{\sigma}$, $C>0$,} \\
     & \hbox{$(ii)$ $\frac{f(t)}{t^3}$ is increasing on $(0,+\infty)$,}\\
     & \hbox{$(iii)$ $0<\mu F(t)=\mu\int_0^tf(s)\,{\rm d}s\leq f(t)t$ for $\mu>4$,}
  \end{array}
\right.
\end{equation}
He \cite{He} considered the existence and concentration behavior of ground state solutions for a class of Schr\"{o}dinger-Poisson system \eqref{main-1} and proved its solutions concentrating around the global minimum of $V$ as $\varepsilon\rightarrow0$. Wang et al. \cite{WTXZ} studied the following system
\begin{equation}\label{main-2}
\left\{
  \begin{array}{ll}
    -\varepsilon^2\Delta u+V(x)u+\lambda\phi u=b(x)f(u) & \hbox{in $\mathbb{R}^3$,} \\
    -\varepsilon^2\Delta \phi=u^2& \hbox{in $\mathbb{R}^3$,}
  \end{array}
\right.
\end{equation}
where $\lambda\neq0$ is a real parameter, $V(x)$ and $b(x)$ satisfy some global assumptions, $f\in C(\mathbb{R}^3)$ is such that
\begin{equation}\label{equ1-3}
\left\{
  \begin{array}{ll}
     & \hbox{$(i)$ $f(t)=o(t^3)$,} \\
     & \hbox{$(ii)$ $\frac{f(t)}{t^3}$ is increasing on $(0,+\infty)$,}\\
     & \hbox{$(iii)$ $|f(t)|\leq c(1+|t|^{p-1})$ with $p\in(4,6)$, $\lim\limits_{|t|\rightarrow\infty}\frac{F(t)}{t^4}=+\infty$,}
  \end{array}
\right.
\end{equation}
the authors proved that problem \eqref{main-2} exists the least energy solution $u_{\varepsilon}\in H^1(\mathbb{R}^3)$ for $\varepsilon>0$ sufficiently small, and $u_{\varepsilon}$ converges to the least energy solution of the associated limit problem and concentrates to some sets.

In the very recent years, there are much attention to be paid on a similar system like \eqref{main}. For example, when $\varepsilon=1$ in \eqref{main}, in \cite{Teng}, we established the existence of positive ground state solution for the system \eqref{main} with $g(u)=\mu|u|^{p-1}u+|u|^{2_s^{\ast}-2}u$ for some $p\in(1,2_s^{\ast}-1)$ by using the Nehari-Pohozaev manifold combing monotone trick with global compactness Lemma. Using the similar methods, in \cite{Teng1}, positive ground state solutions for the system \eqref{main} with $g(u)=|u|^{p-1}u$ with $p\in(2,2_s^{\ast}-1)$, were established when $s=t$. In \cite{ZDS}, the authors studied the existence of radial solutions for the system \eqref{main} with the nonlinearity $g(u)$ satisfying the subcritical or critical assumptions of Berestycki-Lions type. Regarding the semiclassical state of problem \eqref{main}, there are some results on the existence and multiplicity of solutions. Such as, in \cite{MS}, the authors studied the semiclassical state of the following system
\begin{equation*}
\left\{
  \begin{array}{ll}
    \varepsilon^{2s}(-\Delta)^su+V(x)u+\phi u=f(u) & \hbox{in $\mathbb{R}^N$,} \\
    \varepsilon^{\theta}(-\Delta)^{\frac{\alpha}{2}}\phi=\gamma_{\alpha}u^2& \hbox{in $\mathbb{R}^N$,}
  \end{array}
\right.
\end{equation*}
where $s\in(0,1)$, $\alpha\in(0,N)$, $\theta\in(0,\alpha)$, $N\in(2s,2s+\alpha)$, $\gamma_{\alpha}$ is a positive constant, $V(x)$ satisfies \eqref{equ1-1} and $f(u)$ satisfies
the assumptions like \eqref{equ1-2}. By using the Ljusternick-Schnirelmann theory of critical point theory, the authors obtained the multiplicity of positive solutions which concentrate on the minima of $V(x)$ as $\varepsilon\rightarrow0$. In \cite{LZ}, by using the methods mentioned before, Liu and Zhang proved the existence and concentration of positive ground state solution for problem \eqref{main}. In \cite{Teng2}, we studied the system \eqref{main} with competing potential, i.e., $g(u)=K(x)f(u)+Q(x)|u|^{2_s^{\ast}-2}u$, where $f$ is a function of $C^1$ class, superlinear and subcritical nonlinearity, $V(x)$, $K(x)$ and $Q(x)$ are positive continuous functions. Under some suitable assumptions on $V$, $K$ and $Q$, we prove that there is a family of positive ground state solutions which concentrate on the set of minimal points of $V(x)$ and the sets of maximal points of $K(x)$ and $Q(x)$.

In the above mentioned works, the assumptions made on potential $V(x)$ are all global, but for the local assumption like $(V_1)$, there are few works to deal with the fractional Schr\"{o}dinger-Poisson system \eqref{main}, even for the Schr\"{o}dinger-Poisson system \eqref{main-1}. It is well known that the penalization methods developed by del Pino and Felmer \cite{DF} is a powerful trick to solve this class of problems, but it requires the arguments of Nehari manifold. Recently this powerful tools have been applied to fractional Schr\"{o}dinger equations, see \cite{AM, Ambrosi, HZ}. When using the Nehari manifold for the system \eqref{main-1}, the nonlinearity $g(x,u)$ has to be suplinear-4 growth, i.e., $\lim\limits_{t\rightarrow+\infty}\frac{G(x,t)}{t^4}=+\infty$. The purpose of this paper is to extand the threshold of superliear-4 growth. Another penalization which was developed by Byeon and Jeanjean \cite{BJ} is another effective methods, but this method is not available for the nonlinear problems involving fractional Laplacian since the fractional operator $(-\Delta)^s$ is nonlocal, this makes the function $u$ with $u=0$ on $\mathbb{R}^3\backslash B_R(0)$, satisfies the equation $(-\Delta)^s u=f(u)$ in $B_R(0)$, it will not hold on $\mathbb{R}^3\backslash B_R(0)$ if $f(0)=0$. But for the local Laplace operator $-\Delta$, it possesses this properties which $u$ satisfies the equation $-\Delta u=f(u)$ with $f(0)=0$ in the whole $\mathbb{R}^3$. This property is vital to use the penalization method of Byeon and Jeanjean \cite{BJ}. The penalization used by Byeon and Jeanjean \cite{BJ} is defined by
\begin{equation*}
\chi_{\varepsilon}(x)=\left\{
                        \begin{array}{ll}
                          0 & \hbox{$x\in\Lambda/\varepsilon$,} \\
                          1/\varepsilon & \hbox{$x\not\in\Lambda/\varepsilon$,}
                        \end{array}
                      \right.\quad Q_{\varepsilon}(u)=\Big(\int_{\mathbb{R}^3}\chi_{\varepsilon}u^2\,{\rm d}x-1\Big)_{+}^2.
\end{equation*}
To obtain the $L^{\infty}$-estimates and uniformly decay estimate at infinity, this penalization can not applicable directly because there is no local estimates like Theorem 8.17 in \cite{GT}. For tackling these difficulties, we combine the two penalizations mentioned above which has been introduced in Byeon and Wang \cite{BW}, but a change of second penalization is of the following form
\begin{equation*}
Q_{\varepsilon}(u)=\Big(\int_{\mathbb{R}^3\backslash\Lambda/\varepsilon}u^2\,{\rm d}x-\varepsilon\Big)_{+}^2.
\end{equation*}

In this way, we can achieve the main result as follows.
\begin{theorem}\label{thm1-1}
Let $2s+2t>3$, $s,t\in(0,1)$. Suppose that $V$ satisfies $(V_0)$, $(V_1)$ and $g\in C^1(\mathbb{R}^{+},\mathbb{R})$ satisfies $(g_0)$--$(g_3)$. Then there exists an $\varepsilon_0>0$ such that system \eqref{main} possesses a positive solution $(u_{\varepsilon},\phi_{\varepsilon})\in H_{\varepsilon}\times\mathcal{D}^{t,2}(\mathbb{R}^3)$ for all $\varepsilon\in(0,\varepsilon_0)$. Moreover, there exists a maximum point $x_{\varepsilon}$ of $u_{\varepsilon}$ such that $\lim\limits_{\varepsilon\rightarrow0}{\rm dist}(x_{\varepsilon},\mathcal{M})=0$ and
\begin{equation*}
u_{\varepsilon}(x)\leq\frac{C\varepsilon^{3+2s}}{C_0\varepsilon^{3+2s}+|x-x_{\varepsilon}|^{3+2s}}\quad x\in\mathbb{R}^3,\,\,\text{and}\,\, \varepsilon\in(0,\varepsilon_0)
\end{equation*}
for some constants $C>0$ and $C_0\in\mathbb{R}$.
\end{theorem}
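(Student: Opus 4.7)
The plan is to work in rescaled variables $y = x/\varepsilon$, which turns \eqref{main} into the $\varepsilon$-free system
\[
(-\Delta)^s u + V(\varepsilon y)u + \phi u = g(u), \qquad (-\Delta)^t \phi = u^2,
\]
solve the Poisson equation explicitly to write $\phi = \phi_u$ (the Riesz-type convolution, well-defined in $\mathcal{D}^{t,2}$ thanks to $2s+2t>3$ via the Hardy--Littlewood--Sobolev inequality), and reduce the problem to a single nonlocal equation for $u \in H_\varepsilon$ with associated functional $J_\varepsilon(u)=\frac12\|u\|_\varepsilon^2 + \frac14\int \phi_u u^2 - \int G(u)$. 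Following the combined penalization indicated in the introduction, I will modify $g$ outside $\Lambda/\varepsilon$ by the del Pino--Felmer truncation $\tilde g$ (so that $\tilde g(\tau)\le \frac{V_0}{k}\tau$ for some large $k$, preserving $(g_0)$--$(g_3)$ inside $\Lambda/\varepsilon$) and add the outer penalization $Q_\varepsilon(u)=(\int_{\mathbb{R}^3\setminus\Lambda/\varepsilon}u^2-\varepsilon)_+^2$; the resulting modified functional $\widetilde J_\varepsilon$ has mountain--pass geometry uniformly in $\varepsilon$ because of $(g_2)$ with exponent $q>\frac{4s+2t}{s+t}$, which is the exact threshold making $\frac14\int\phi_u u^2 - \frac1q\int u^q$ negative along large scalings.

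Next I would produce a critical point $u_\varepsilon$ of $\widetilde J_\varepsilon$ at its mountain--pass level $c_\varepsilon$, and compare $c_\varepsilon$ with the mountain--pass level $c_{V_0}$ of the limiting ($\varepsilon=0$) problem with constant potential $V_0$; assumptions $(V_0)$--$(V_1)$ together with $(g_3)$ (via an appropriate Jeanjean-type monotonicity) give $\limsup_{\varepsilon\to0}c_\varepsilon\le c_{V_0}$. Boundedness of Palais--Smale sequences for $\widetilde J_\varepsilon$ follows by testing with $u$ and with a cut-off of $u$, using $(g_2)$--$(g_3)$ and the superlinearity of the Poisson term together with the outer penalization (which is coercive in the offending direction). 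A concentration--compactness argument applied to the rescaled maximum points $y_\varepsilon$ of $u_\varepsilon$ then produces a nontrivial limit $w$ solving the constant-potential limit equation at $V(x_0)$ for some $x_0\in\overline{\Lambda}$; the level comparison forces $V(x_0)=V_0$, hence $x_0\in\mathcal{M}$, and in particular $\varepsilon y_\varepsilon\to \mathcal M$.

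To close the argument one has to show that for $\varepsilon$ small the penalizations are inactive, so that $u_\varepsilon$ actually solves \eqref{main}. For this I will first establish a uniform $L^\infty$ bound via a Moser iteration adapted to $(-\Delta)^s$ (using $(g_1)$), and then an $L^\infty$ decay of the form $u_\varepsilon(y)\le C/(1+|y-y_\varepsilon|^{3+2s})$; the natural way is to construct a barrier from the explicit fundamental solution / Bessel-type kernel of $(-\Delta)^s+V_0$ and invoke the comparison principle on $\mathbb{R}^3\setminus B_R(y_\varepsilon)$ once $u_\varepsilon$ is small there (which holds by the concentration just proved). Once the decay is in hand, $\int_{\mathbb{R}^3\setminus\Lambda/\varepsilon}u_\varepsilon^2\le C\varepsilon^{3+2s}\cdot(\text{dist}(y_\varepsilon,\partial\Lambda/\varepsilon))^{-(3+2s)+3}=o(\varepsilon)$, so $Q_\varepsilon(u_\varepsilon)=0$; similarly $\tilde g(u_\varepsilon)=g(u_\varepsilon)$ outside $\Lambda/\varepsilon$ because $u_\varepsilon$ is pointwise small there. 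Undoing the rescaling gives the decay estimate stated in the theorem.

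The main obstacle I anticipate is the decay estimate, and relatedly the fact that removing the penalization requires quantitative pointwise control far from $y_\varepsilon$. Unlike the local case (Byeon--Jeanjean), one cannot simply localize $u_\varepsilon$ to $\Lambda/\varepsilon$, precisely the difficulty flagged in the introduction; the soft penalization $Q_\varepsilon$ together with the barrier/comparison argument using the fractional kernel's polynomial decay $|y|^{-(3+2s)}$ replaces that missing localization, and the delicate point will be verifying the comparison hypotheses uniformly in $\varepsilon$ while handling the coupling through $\phi_{u_\varepsilon}$, which itself must be shown to be small at infinity using the Riesz representation and $2s+2t>3$.
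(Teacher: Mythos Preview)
Your overall architecture matches the paper's: the same rescaling, the same combined del Pino--Felmer/Byeon--Wang penalization, level comparison with the limiting constant-potential problem at $V_0$, concentration of maxima in $\mathcal M$, uniform $L^\infty$ control, polynomial decay via a comparison with the kernel of $(-\Delta)^s+{\rm const}$, and removal of both penalizations. The decay part you flag as the main obstacle is handled in the paper essentially as you describe, by the Bessel-potential representation and the barrier of Felmer--Quaas--Tan type; the Poisson coupling is harmless there because $\phi_{u_\varepsilon}u_\varepsilon\ge 0$ only helps the supersolution inequality.

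The genuine gap is the step ``boundedness of Palais--Smale sequences for $\widetilde J_\varepsilon$ follows by testing with $u$ and with a cut-off of $u$''. Under $(g_2)$--$(g_3)$ the exponent $q$ may lie in $(\tfrac{4s+2t}{s+t},4)$, so the nonlinearity is not super-quartic and no Ambrosetti--Rabinowitz-type inequality with $\mu>4$ is available; meanwhile the Poisson term $\tfrac14\int\phi_u u^2$ scales like a quartic. In this regime direct testing does not yield Palais--Smale boundedness for Schr\"odinger--Poisson functionals, and the outer penalty $Q_\varepsilon$ controls only the $L^2$-mass outside $\Lambda/\varepsilon$, not the interior growth. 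For the \emph{limiting} problem the paper gets around this with a Jeanjean-type Pohozaev constraint (the Nehari--Pohozaev manifold $\mathcal M_\mu$ and Lemma~3.3), but that trick does not transfer to the $\varepsilon$-dependent penalized functional. Instead, the paper builds the set $\mathcal N_\varepsilon$ of truncated translates of limit ground states, shows it is uniformly bounded, proves a uniform lower bound on $\|\mathcal J_\varepsilon'\|$ on the ``annular'' region $\mathcal N_\varepsilon^{d_0}\setminus \mathcal N_\varepsilon^{d}$ at levels $\le c_{V_0}+\rho$ (Lemma~4.2), and then runs a localized deformation argument (Lemmas~4.3--4.4) to produce a Palais--Smale sequence already confined to the bounded set $\mathcal N_\varepsilon^{d_0}$. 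This localization near approximate solutions is the missing idea in your plan; without it you do not obtain a bounded Palais--Smale sequence for $\widetilde J_\varepsilon$, and the subsequent concentration--compactness and decay steps never get off the ground.
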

We give some remarks on the above Theorem.
\begin{remark}
Observe that if $s=t=1$, $\frac{4s+2t}{s+t}=3$, so from $(g_2)$ and $(g_3)$, we see that our assumptions are very weaker than \eqref{equ1-2} and \eqref{equ1-3} in \cite{He} and \cite{WTXZ}, respectively. On the other hand, we consider the local assumption $(V_1)$ comparing the present works appearing in the literature.
\end{remark}

\begin{remark}
If a local $L^{\infty}$-estimate like Theorem 8.17 in \cite{GT} will be established, the assumption $(V_1)$ can be improved as follows
\begin{equation*}
\inf_{\Lambda}V(x)<\inf_{\partial\Lambda}V(x).
\end{equation*}
\end{remark}
The paper is organized as follows, in Section 2, we give some preliminary results. In Section 3, we prove the existence of positive ground state solutions for "limit problem". In Section 4, we prove the main result Theorem \ref{thm1-1}.

\section{Variational Setting}

In this section, we outline the variational framework for studying problem \eqref{main} and list some preliminary Lemma which used later. In the sequel, we denote by $\|\cdot\|_{p}$ the usual norm of the space $L^p(\mathbb{R}^3)$, the letter $c_i$ ($i=1,2,\ldots$) or $C$ denote by some positive constants.

\subsection{Work space stuff}
We define the homogeneous fractional Sobolev space $\mathcal{D}^{\alpha,2}(\mathbb{R}^3)$ as follows
\begin{equation*}
\mathcal{D}^{s,2}(\mathbb{R}^3)=\Big\{u\in L^{2_{s}^{\ast}}(\mathbb{R}^3)\,\,\Big|\,\,|\xi|^{s}(\mathcal{F}u)(\xi)\in L^2(\mathbb{R}^3)\Big\}
\end{equation*}
which is the completion of $C_0^{\infty}(\mathbb{R}^3)$ under the norm
\begin{equation*}
\|u\|_{\mathcal{D}^{s,2}}=\Big(\int_{\mathbb{R}^3}|(-\Delta)^{\frac{s}{2}}u|^2\,{\rm d}x\Big)^{\frac{1}{2}}=\Big(\int_{\mathbb{R}^3}|\xi|^{2s}|(\mathcal{F}u)(\xi)|^2\,{\rm d}\xi\Big)^{\frac{1}{2}}
\end{equation*}
The fractional Sobolev space $H^{s}(\mathbb{R}^3)$ can be described by means of the Fourier transform, i.e.
\begin{equation*}
H^{s}(\mathbb{R}^3)=\Big\{u\in L^2(\mathbb{R}^3)\,\,\Big|\,\,\int_{\mathbb{R}^3}(|\xi|^{2s}|(\mathcal{F}u)(\xi)|^2+|(\mathcal{F}u)(\xi)|^2)\,{\rm d}\xi<+\infty\Big\}.
\end{equation*}
In this case, the inner product and the norm are defined as
\begin{equation*}
(u,v)=\int_{\mathbb{R}^3}(|\xi|^{2s}(\mathcal{F}u)(\xi)\overline{(\mathcal{F}v)(\xi)}+(\mathcal{F}u)(\xi)\overline{(\mathcal{F}v)(\xi)})\,{\rm d}\xi
\end{equation*}
and
\begin{equation*}
 \|u\|_{H^{s}}=\bigg(\int_{\mathbb{R}^3}(|\xi|^{2s}|(\mathcal{F}u)(\xi)|^2+|(\mathcal{F}u)(\xi)|^2)\,{\rm d}\xi\bigg)^{\frac{1}{2}}.
\end{equation*}
From Plancherel's theorem we have $\|u\|_2=\|\mathcal{F}u\|_2$ and $\||\xi|^{s}\mathcal{F}u\|_2=\|(-\Delta)^{\frac{s}{2}}u\|_2$. Hence, the norm of $H^s(\mathbb{R}^3)$ is equivalent to the following
\begin{equation*}
\|u\|_{H^{s}}=\bigg(\int_{\mathbb{R}^3}(|(-\Delta)^{\frac{s}{2}}u(x)|^2+|u(x)|^2)\,{\rm d}x\bigg)^{\frac{1}{2}},\quad \forall u\in H^{s}(\mathbb{R}^3).
\end{equation*}
We denote $\|\cdot\|$ by $\|\cdot\|_{H^{s}}$ in the sequel for convenience.

In terms of finite differences, the fractional Sobolev space $H^{s}(\mathbb{R}^3)$ also can be defined as follows
\begin{equation*}
H^{s}(\mathbb{R}^3)=\Big\{u\in L^2(\mathbb{R}^3)\,\,\Big|\,\,D_{s}u\in L^2(\mathbb{R}^3)\Big\},\quad |D_{s}u|^2=\int_{\mathbb{R}^3}\frac{|u(x)-u(y)|^2}{|x-y|^{3+2s}}\,{\rm d}y
\end{equation*}
endowed with the natural norm
\begin{equation*}
\|u\|_{H^{s}}=\bigg(\int_{\mathbb{R}^3}|u|^2\,{\rm d}x+\int_{\mathbb{R}^3}|D_{s}u|^2\,{\rm d}x\bigg)^{\frac{1}{2}}.
\end{equation*}

Also, in view of Proposition 3.4 and Proposition 3.6 in \cite{NPV}, we have
\begin{equation}\label{equ2-1}
\|(-\Delta)^{\frac{s}{2}}u\|_2^2=\int_{\mathbb{R}^3}|\xi|^{2s}|(\mathcal{F}u)(\xi)|^2\,{\rm d}\xi=\frac{C_{s}}{2}\int_{\mathbb{R}^3}|D_{\alpha}u|^2\,{\rm d}x.
\end{equation}

We define the Sobolev space $H_{\varepsilon}=\{u\in H^s(\mathbb{R}^3)\,\,|\,\, \int_{\mathbb{R}^3}V(\varepsilon x)u^2\,{\rm d}x<\infty\}$ endowed with the norm
\begin{equation*}
\|u\|_{H_{\varepsilon}}=\Big(\int_{\mathbb{R}^3}(|D_su|^2+V(\varepsilon x)u^2)\,{\rm d}x\Big)^{\frac{1}{2}},
\end{equation*}
where we have omitted the constant $\frac{C_s}{2}$ in the front of $|D_su|^2$.

It is well known that $H^s(\mathbb{R}^3)$ is continuously embedded into $L^r(\mathbb{R}^3)$ for $2\leq r\leq 2_{s}^{\ast}$ ($2_{s}^{\ast}=\frac{6}{3-2s}$).
Obviously, this conclusion also holds true for $H_{\varepsilon}$.

\subsection{Formulation of Problem \eqref{main}}
It is easily seen that, just performing the change of variables $u(x)\rightarrow u(x/\varepsilon)$ and $\phi(x)\rightarrow \phi(x/\varepsilon)$, and taking $z=x/\varepsilon$, problem \eqref{main} can be rewritten as the following equivalent form
\begin{equation}\label{main-2-1}
\left\{
  \begin{array}{ll}
   (-\Delta)^su+V(\varepsilon z)u+\phi u=g(u) & \hbox{in $\mathbb{R}^3$,} \\
    (-\Delta)^t\phi=u^2,\,\, u>0& \hbox{in $\mathbb{R}^3$.}
  \end{array}
\right.
\end{equation}
Observe that if $4s+2t\geq3$, there holds $2\leq\frac{12}{3+2t}\leq\frac{6}{3-2s}$ and thus $H_{\varepsilon}\hookrightarrow L^{\frac{12}{3+2t}}(\mathbb{R}^3)$. Considering $u\in H_{\varepsilon}$, the linear functional $\widetilde{\mathcal{L}}_u:\mathcal{D}^{t,2}(\mathbb{R}^3)\rightarrow\mathbb{R}$ is defined by $\widetilde{\mathcal{L}}_u(v)=\int_{\mathbb{R}^3}u^2v\,{\rm d}x$. Similarly, using the Lax-Milgram theorem, there exists a unique $\phi_u^t\in\mathcal{D}^{t,2}(\mathbb{R}^3)$ such that
\begin{align*}
\frac{C_t}{2}\int_{\mathbb{R}^3\times\mathbb{R}^3}\frac{(\phi_u^t(z)-\phi_u^t(y))(v(z)-v(y))}{|z-y|^{3+2s}}\,{\rm d}y\,{\rm d}z&=\int_{\mathbb{R}^3}(-\Delta)^{\frac{t}{2}}\phi_u^t(-\Delta)^{\frac{t}{2}}v\,{\rm d}z\\
&=\int_{\mathbb{R}^3}u^2v\,{\rm d}z,\quad \forall v\in\mathcal{D}^{t,2}(\mathbb{R}^3),
\end{align*}
that is $\phi_u^t$ is a weak solution of $(-\Delta)^t\phi_u^t=u^2$ and so the representation formula holds
\begin{equation*}
\phi_u^t(x)=c_t\int_{\mathbb{R}^3}\frac{u^2(y)}{|x-y|^{3-2t}}\,{\rm d}y,\quad x\in\mathbb{R}^3,\quad c_t=\pi^{-\frac{3}{2}}2^{-2t}\frac{\Gamma(\frac{3-2t}{2})}{\Gamma(t)}.
\end{equation*}
Substituting $\phi_u^t$ in \eqref{main-2-1}, it reduces to a single fractional Schr\"{o}dinger equation
\begin{equation}\label{R-1}
(-\Delta)^su+V(\varepsilon z)u+\phi_u^tu=g(u)\quad z\in\mathbb{R}^3.
\end{equation}
The solvation of \eqref{R-1} can be found by the critical points of the associated energy functional $J_{\varepsilon}: H_{\varepsilon}\rightarrow\mathbb{R}$ defined by
\begin{align*}
J_{\varepsilon}(u)&=\frac{1}{2}\int_{\mathbb{R}^3}|D_su|^2\,{\rm d}z+\frac{1}{2}\int_{\mathbb{R}^3}V(\varepsilon z)u^2\,{\rm d}z+\frac{1}{4}\int_{\mathbb{R}^3}\phi_u^tu^2\,{\rm d}z-\int_{\mathbb{R}^3}G(u)\,{\rm d}z.
\end{align*}

Let us summarize some properties of the function $\phi_u^t$.
\begin{lemma}\label{lem2-1}(\cite{Teng, Teng3})
For every $u\in H_{\varepsilon}$ with $4s+2t\geq3$, define $\Phi(u)=\phi_u^t\in \mathcal{D}^{t,2}(\mathbb{R}^3)$, where $\phi_u^t$ is the unique solution of equation $(-\Delta)^t\phi=u^2$. Then there hold:\\
$(i)$ If $u_n\rightharpoonup u$ in $H_{\varepsilon}$, then $\Phi(u_n)\rightharpoonup\Phi(u)$ in $\mathcal{D}^{t,2}(\mathbb{R}^3)$;\\
$(ii)$ $\Phi(tu)=t^2\Phi(u)$ for any $t\in\mathbb{R}$;\\
$(iii)$ For $u\in H_{\varepsilon}$, one has
\begin{equation*}
\|\Phi(u)\|_{\mathcal{D}^{t,2}}\leq C\|u\|_{\frac{12}{3+2t}}^2\leq C\|u\|_{H_\varepsilon}^2,\quad \int_{\mathbb{R}^3}\Phi(u)u^2\,{\rm d}x\leq C\|u\|_{\frac{12}{3+2t}}^4\leq C\|u\|_{H_\varepsilon}^4,
\end{equation*}
where constant $C$ is independent of $u$;\\
$(iv)$ Let $2s+2t>3$, if $u_n\rightharpoonup u$ in $H_{\varepsilon}$ and $u_n\rightarrow u$ a.e. in $\mathbb{R}^3$, then for any $v\in H_{\varepsilon}$,
\begin{equation*}
\int_{\mathbb{R}^3}\phi_{u_n}^tu_nv\,{\rm d}z\rightarrow\int_{\mathbb{R}^3}\phi_{u}^tuv\,{\rm d}z\quad\text{and}\quad\int_{\mathbb{R}^3}g(u_n)v\,{\rm d}z\rightarrow\int_{\mathbb{R}^3}g(u)v\,{\rm d}z
\end{equation*}
and thus $u$ is a solution for problem \eqref{R-1}.
\end{lemma}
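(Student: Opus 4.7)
The plan is to verify the four claims in the order (ii), (iii), (i), (iv), since each one builds on its predecessors. Claim (ii) is purely algebraic: linearity of $(-\Delta)^t$ gives $(-\Delta)^t(r^2\Phi(u))=r^2u^2=(ru)^2$, so uniqueness of the Lax--Milgram solution defining $\Phi$ forces $\Phi(ru)=r^2\Phi(u)$.

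For (iii), I would test the weak formulation of $(-\Delta)^t\phi_u^t=u^2$ against $\phi_u^t$ itself to obtain the energy identity $\|\phi_u^t\|_{\mathcal{D}^{t,2}}^2=\int_{\mathbb{R}^3}\phi_u^t u^2\,{\rm d}x$. H\"older with the conjugate exponents $2_t^{\ast}=6/(3-2t)$ and $6/(3+2t)$, followed by the Sobolev embedding $\mathcal{D}^{t,2}\hookrightarrow L^{2_t^{\ast}}$, yields
\[
\|\phi_u^t\|_{\mathcal{D}^{t,2}}^2\leq\|\phi_u^t\|_{L^{2_t^{\ast}}}\|u\|_{L^{12/(3+2t)}}^2\leq C\|\phi_u^t\|_{\mathcal{D}^{t,2}}\|u\|_{L^{12/(3+2t)}}^2,
\]
and dividing gives the first bound. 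The assumption $4s+2t\geq 3$ forces $12/(3+2t)\in[2,2_s^{\ast}]$, so $H_\varepsilon\hookrightarrow L^{12/(3+2t)}$ delivers the second. Substitution back into the energy identity yields the bounds on $\int\phi_u^t u^2\,{\rm d}x$.

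For (i), the estimate in (iii) shows $\{\Phi(u_n)\}$ is bounded in $\mathcal{D}^{t,2}$, so along a subsequence $\Phi(u_n)\rightharpoonup\psi$. To identify $\psi=\Phi(u)$, I would pass to the limit in the weak equation: for each $v\in C_0^{\infty}(\mathbb{R}^3)$,
\[
\int_{\mathbb{R}^3}(-\Delta)^{t/2}\Phi(u_n)(-\Delta)^{t/2}v\,{\rm d}x=\int_{\mathbb{R}^3}u_n^2 v\,{\rm d}x.
\]
The left-hand side converges to the analogous integral with $\psi$ by definition of weak convergence. For the right-hand side, $u_n\rightharpoonup u$ in $H_\varepsilon$ and the local compact embedding $H^s\hookrightarrow L^2_{\mathrm{loc}}$ give $u_n\to u$ strongly in $L^2$ on $\mathrm{supp}(v)$, hence $\int u_n^2 v\to\int u^2 v$. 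Lax--Milgram uniqueness together with density of $C_0^{\infty}$ in $\mathcal{D}^{t,2}$ then forces $\psi=\Phi(u)$, and a subsequence-of-subsequence argument extends this to the full sequence.

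Claim (iv) is the substantive step. The convergence $\int g(u_n)v\,{\rm d}z\to\int g(u)v\,{\rm d}z$ follows from the growth bound $|g(\tau)|\leq\eta|\tau|+C_\eta|\tau|^{2_s^{\ast}-1}$ coming from $(g_0)$--$(g_1)$, a.e.\ convergence $u_n\to u$, boundedness in $L^p$ for $p\in[2,2_s^{\ast}]$, and a Vitali convergence argument after a H\"older-based equi-integrability check. For $\int\phi_{u_n}^t u_nv\,{\rm d}z\to\int\phi_u^t uv\,{\rm d}z$ I would split
\[
\int\phi_{u_n}^t u_n v\,{\rm d}z-\int\phi_u^t uv\,{\rm d}z=\int(\phi_{u_n}^t-\phi_u^t)uv\,{\rm d}z+\int\phi_{u_n}^t(u_n-u)v\,{\rm d}z.
\]
Part (i), together with the compact embedding $\mathcal{D}^{t,2}\hookrightarrow L^p_{\mathrm{loc}}$ for $p<2_t^{\ast}$, gives $\phi_{u_n}^t\to\phi_u^t$ a.e.; combined with uniform $L^{2_t^{\ast}}$-boundedness this yields weak convergence in $L^{2_t^{\ast}}$, so pairing with $uv\in L^{6/(3+2t)}$ (via H\"older and $H_\varepsilon\hookrightarrow L^{12/(3+2t)}$) sends the first integral to zero. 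For the second, uniform boundedness of $\phi_{u_n}^t$ in $L^{2_t^{\ast}}$ reduces the problem to $(u_n-u)v\to 0$ in $L^{6/(3+2t)}$, which I would establish by Vitali from a.e.\ convergence, uniform $L^{12/(3+2t)}$-boundedness of $u_n$, and the absolute continuity of the Lebesgue integral for $v\in L^{12/(3+2t)}$. The strict hypothesis $2s+2t>3$ is used precisely to keep the relevant H\"older exponents strictly subcritical so these equi-integrability arguments close, and the tail control in this last step is the main obstacle of the lemma. Once both convergences are in hand, passing to the limit in the weak form of \eqref{R-1} tested against arbitrary $v\in H_\varepsilon$ (with the linear terms handled by $u_n\rightharpoonup u$ in $H_\varepsilon$) shows $u$ is a weak solution of \eqref{R-1}.
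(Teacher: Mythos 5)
Your argument is correct, but note that the paper itself never proves Lemma \ref{lem2-1}: it is quoted from \cite{Teng,Teng3}, so there is no in-paper proof to match. What you wrote is essentially the standard argument used in those references: uniqueness in Lax--Milgram for the scaling property, testing the weak equation against $\phi_u^t$ plus H\"older with exponents $\frac{6}{3-2t}$, $\frac{6}{3+2t}$ and the Sobolev embeddings $\mathcal{D}^{t,2}\hookrightarrow L^{2_t^{\ast}}$, $H_{\varepsilon}\hookrightarrow L^{12/(3+2t)}$ for (iii), identification of the weak limit through compactly supported test functions and $L^2_{\rm loc}$ compactness for (i), and the splitting $\int(\phi_{u_n}^t-\phi_u^t)uv+\int\phi_{u_n}^t(u_n-u)v$ with weak $L^{2_t^{\ast}}$ convergence and a Vitali/equi-integrability argument for (iv). One small inaccuracy in your commentary: the exponent bookkeeping in your proof of (iv) only requires $4s+2t\geq3$ (so that $\frac{12}{3+2t}\in[2,2_s^{\ast}]$ and $uv\in L^{6/(3+2t)}$), and the tail control comes from the fixed $L^{12/(3+2t)}$-integrability of $v$ rather than from $2s+2t>3$; the strict condition $2s+2t>3$ is of course available and harmless here, but in the paper it is really needed elsewhere (e.g.\ to make the coefficient $\frac{2s+2t-3}{2}$ of the $L^2$-term in $\mathcal{G}_{\mu}$ positive). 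Also, in (iv) the final assertion that $u$ solves \eqref{R-1} implicitly presupposes that $u_n$ is a sequence of approximate critical points ($\mathcal{J}'$ or $J_\varepsilon'(u_n)\to0$), which is the intended reading and which you use correctly when passing to the limit in the weak formulation.
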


In the end, we recall some regularity results which will be used in the sequel.

\begin{lemma}\label{lem2-1-0}(\cite{Teng3})
Assume that $u_n$ are nonnegative weak solution of
\begin{equation*}
\left\{
  \begin{array}{ll}
   (-\Delta)^su+V_n(x) u+ \phi u=f_n(x,u) & \hbox{in $\mathbb{R}^3$,} \\
    (-\Delta)^t\phi=u^2& \hbox{in $\mathbb{R}^3$,}
  \end{array}
\right.
\end{equation*}
where $\{V_n\}$ satisfies $V_n(x)\geq \alpha_0>0$ for all $x\in\mathbb{R}^3$ and $f_n(x,\tau)$ is a Carathedory function satisfying that for any $\delta>0$, there exists $C_{\delta}>0$ such that
\begin{equation*}
|f_n(x,\tau)|\leq\delta|\tau|+C_{\delta}|\tau|^{2_s^{\ast}-1},\quad \forall (x,\tau)\in\mathbb{R}^3\times\mathbb{R}.
\end{equation*}
Suppose that $u_n$ convergence strongly in $H^s(\mathbb{R}^3)$. Then there exists $C>0$ such that
\begin{equation*}
\|u_n\|_{L^{\infty}}\leq C\quad \text{for all}\,\, n.
\end{equation*}
\end{lemma}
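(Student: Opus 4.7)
The plan is to run a truncated Moser iteration on the single fractional equation
$(-\Delta)^s u_n = -V_n u_n - \phi_{u_n}^t u_n + f_n(x, u_n) =: F_n(x),$
using the strong $H^s$-convergence $u_n \to u$, which in particular gives strong $L^{2_s^*}$-convergence, to control the critical-growth term uniformly in $n$. For $L \geq 1$ and $\beta \geq 0$, set $u_{n,L} := \min(u_n, L)$ and test the equation against $u_n u_{n,L}^{2\beta} \in H^s$. A Stroock--Varopoulos/convexity inequality of the form
$\int u_n u_{n,L}^{2\beta}\, (-\Delta)^s u_n\, dx \geq \frac{C}{(\beta+1)^2} \bigl\|(-\Delta)^{s/2}(u_n u_{n,L}^\beta)\bigr\|_2^2,$
valid for $u_n \geq 0$ and standard in the fractional Brezis--Kato literature, combined with the Sobolev embedding $\dot H^s(\mathbb{R}^3) \hookrightarrow L^{2_s^*}(\mathbb{R}^3)$, yields
$\|u_n u_{n,L}^\beta\|_{2_s^*}^2 \leq C(\beta+1)^2 \int F_n(x)\, u_n u_{n,L}^{2\beta}\, dx.$

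On the right-hand side, the terms containing $V_n u_n^2$, $\phi_{u_n}^t u_n^2$, and the subcritical contribution $\delta u_n^2$ are controlled via Hölder using the uniform $L^{2_t^*}$-bound on $\phi_{u_n}^t$ from Lemma \ref{lem2-1} and the uniform $H^s$-bound on $u_n$; they reduce to lower-order $L^q$-norms of $u_n u_{n,L}^\beta$ coming from the inductive stage. The decisive contribution is the critical term $\int u_n^{2_s^*} u_{n,L}^{2\beta}\, dx$, which I split over $\{u_n \leq M\}$ and $\{u_n > M\}$. On the first set the truncation gives $u_{n,L}^{2\beta} \leq M^{2\beta}$, so the integral is bounded by $M^{2\beta}\|u_n\|_{2_s^*}^{2_s^*}$. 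On the second set, Hölder with conjugate exponents $\frac{2_s^*}{2_s^*-2}$ and $\frac{2_s^*}{2}$ delivers
$\int_{\{u_n > M\}} u_n^{2_s^*}\, u_{n,L}^{2\beta}\, dx \leq \Bigl(\int_{\{u_n > M\}} u_n^{2_s^*}\, dx\Bigr)^{(2_s^*-2)/2_s^*} \|u_n u_{n,L}^\beta\|_{2_s^*}^2.$
By strong $L^{2_s^*}$-convergence the family $\{u_n^{2_s^*}\}$ is equi-integrable, so for any $\eta > 0$ there is $M = M(\eta)$ \emph{independent of $n$} making the first factor less than $\eta$. Choosing $\eta$ so that $C(\beta+1)^2 C_\delta \eta^{(2_s^*-2)/2_s^*} \leq \tfrac{1}{2}$, one absorbs this term into the left-hand side.

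Sending $L \to \infty$ by monotone convergence gives $\|u_n\|_{(\beta+1)2_s^*} \leq C(\beta)$ uniformly in $n$, provided $u_n$ is already uniformly bounded in the $L^p$-space that controls the lower-order terms. The base case is $\beta = 0$, where the uniform $L^{2_s^*}$-bound comes from the strong $H^s$-convergence together with Sobolev. Iterating with $(\beta_{k+1}+1)\cdot 2 = (\beta_k+1)\cdot 2_s^*$, the exponents $(\beta_k+1)2_s^*$ grow geometrically with ratio $\chi := 2_s^*/2 > 1$; a classical Moser telescoping, tracking the constants $C(\beta_k)$ so that $\prod_k C(\beta_k)^{1/(\beta_k+1)}$ converges, yields the desired uniform $L^\infty$-estimate.

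The main obstacle is the critical exponent: without a compactness hypothesis, the constant $C_\delta \|u_n\|_{2_s^*}^{2_s^*-2}$ produced by the naive Hölder estimate need not be small, and the Moser iteration fails to close uniformly in $n$. The assumption that $u_n \to u$ strongly in $H^s$ is used precisely to replace this defect by the equi-integrability smallness above, uniformly in $n$. Additional technical care is needed to justify the nonlocal convexity inequality for the truncated test function and the passage $L \to \infty$, but these are by-now standard in the fractional Moser/Brezis--Kato framework.
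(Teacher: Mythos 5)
The paper does not actually prove this lemma; it is quoted from \cite{Teng3}, so there is no in-paper argument to compare line by line. Your strategy (truncated test functions $u_nu_{n,L}^{2\beta}$, a Stroock--Varopoulos type inequality, and using strong $H^s$- hence $L^{2_s^\ast}$-convergence to split the critical term at a level $M$ chosen by equi-integrability uniformly in $n$) is exactly the standard Brezis--Kato/Moser scheme used for such statements, and all the individual estimates you write are correct. Two remarks on the lower-order terms: since $u_nu_{n,L}^{2\beta}\geq0$, $V_n\geq\alpha_0>0$ and $\phi_{u_n}^t\geq0$, the terms $\int V_nu_n^2u_{n,L}^{2\beta}$ and $\int\phi_{u_n}^tu_n^2u_{n,L}^{2\beta}$ have a favorable sign and should simply be dropped; trying to control them by H\"older is not only unnecessary but risky, because the lemma assumes no upper bound on $V_n$.

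The genuine gap is in the final telescoping. At the stage $\beta_k$ the absorption forces you to choose $M=M(\beta_k)$ so that $\sup_n\bigl(\int_{\{u_n>M\}}u_n^{2_s^\ast}\bigr)^{(2_s^\ast-2)/2_s^\ast}\lesssim(\beta_k+1)^{-2}$, and the surviving constant in the iteration inequality then contains the factor $M(\beta_k)^{2_s^\ast-2}$. The function $M\mapsto\sup_n\int_{\{u_n>M\}}u_n^{2_s^\ast}$ tends to $0$, but at no quantitative rate (it is governed by the tails of the limit $u$ and of finitely many $u_n$), so $M(\beta_k)$ may grow arbitrarily fast compared with the geometric growth of $\beta_k+1$, and the convergence of $\prod_k C(\beta_k)^{1/(\beta_k+1)}$ that you assert by ``tracking the constants'' cannot be guaranteed. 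What your splitting argument does give, for each \emph{fixed} $\beta$, is a uniform-in-$n$ bound of $\|u_n\|_{(\beta+1)2_s^\ast}$, i.e.\ uniform bounds in every finite $L^p$; to reach $L^\infty$ you must switch mechanisms after finitely many steps. The standard repair: apply your splitting once to get $\sup_n\|u_n\|_{p_1}<\infty$ for some $p_1>2_s^\ast$; then $a_n:=\delta+C_\delta u_n^{2_s^\ast-2}$ is uniformly bounded in $L^{r}$ with $r=p_1/(2_s^\ast-2)>\tfrac{3}{2s}$, and either (i) rerun the Moser iteration with $\int a_n(u_nu_{n,L}^\beta)^2\leq\|a_n\|_r\|u_nu_{n,L}^\beta\|_{2r'}^2$ and the interpolation--Young inequality $\|w\|_{2r'}^2\leq\epsilon\|w\|_{2_s^\ast}^2+C\epsilon^{-\theta/(1-\theta)}\|w\|_2^2$ with $\epsilon\sim(\beta+1)^{-2}$, which yields constants growing only polynomially in $\beta$ so that the telescoping product converges, or (ii) use the comparison $(-\Delta)^su_n+\alpha_0u_n\leq a_nu_n$ together with the Bessel-kernel representation and Young's inequality (as the paper itself does in Section 5), since $a_nu_n$ is uniformly bounded in some $L^p$ with $p>\tfrac{3}{2s}$. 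With this modification your proof closes; as written, the last step would fail.
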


\begin{lemma}\label{lem2-2}(\cite{S})
Let $w=(-\Delta)^su$. Assume $w\in L^{\infty}(\mathbb{R}^n)$ and $u\in L^{\infty}(\mathbb{R}^n)$ for $s>0$.\\
If $2s\leq1$, then $u\in C^{0,\alpha}(\mathbb{R}^n)$ for any $\alpha\leq2s$. Moreover
\begin{equation*}
\|u\|_{C^{0,\alpha}(\mathbb{R}^n)}\leq C\Big(\|u\|_{L^{\infty}(\mathbb{R}^n)}+\|w\|_{L^{\infty}(\mathbb{R}^n)}\Big)
\end{equation*}
for some constant $C$ depending only on $n$, $\alpha$ and $s$.\\
If $2s>1$, then $u\in C^{1,\alpha}(\mathbb{R}^n)$ for any $\alpha<2s-1$. Moreover
\begin{equation*}
\|u\|_{C^{1,\alpha}(\mathbb{R}^n)}\leq C\Big(\|u\|_{L^{\infty}(\mathbb{R}^n)}+\|w\|_{L^{\infty}(\mathbb{R}^n)}\Big)
\end{equation*}
for some constant $C$ depending only on $n$, $\alpha$ and $s$.
\end{lemma}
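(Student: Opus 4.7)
\medskip

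\textbf{Strategy via double penalization.} After the rescaling $z=x/\varepsilon$, I work with the equivalent system \eqref{main-2-1} in $H_\varepsilon$. Following the combination of del Pino--Felmer with a modified Byeon--Wang scheme suggested in the introduction, I build a penalized nonlinearity $\widetilde g_\varepsilon(z,\tau)$ which coincides with $g(\tau)$ for $z\in\Lambda/\varepsilon$ but is truncated outside $\Lambda/\varepsilon$ so that $\widetilde g_\varepsilon(z,\tau)/\tau\le V_0/\ell$ for a fixed $\ell>2$, and I attach the quadratic penalty $Q_\varepsilon(u)=\bigl(\int_{\mathbb{R}^3\setminus\Lambda/\varepsilon}u^2\,dz-\varepsilon\bigr)_+^2$. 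The penalized functional is
\[
\Gamma_\varepsilon(u)=\tfrac12\|u\|_{H_\varepsilon}^2+\tfrac14\int_{\mathbb{R}^3}\phi_u^t u^2\,dz-\int_{\mathbb{R}^3}\widetilde G_\varepsilon(z,u)\,dz+\tfrac14 Q_\varepsilon(u).
\]
The goal is to find a critical point $u_\varepsilon$ of $\Gamma_\varepsilon$ for which both penalizations are inactive, so that $u_\varepsilon$ solves \eqref{main-2-1}, and then to undo the scaling.

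\textbf{Mountain pass level, comparison with the limit problem.} I first verify that $\Gamma_\varepsilon$ has a mountain pass geometry uniformly in $\varepsilon$: the hypothesis $q>(4s+2t)/(s+t)$ in $(g_2)$ means $q(s+t)>4s+2t$, which guarantees that along rays $tu$ the subcritical power $g(\tau)\ge\lambda\tau^{q-1}$ eventually dominates the Coulomb quartic term $t^4\int\phi_u^t u^2$ (controlled by Lemma \ref{lem2-1}(iii)), so $\Gamma_\varepsilon(tu)\to-\infty$. Let $c_\varepsilon$ denote the resulting min--max level. Plugging a suitably scaled cut-off of a positive ground state $U_0$ of the autonomous limit problem $(-\Delta)^s U+V_0U+\phi_U^t U=g(U)$ (whose existence at level $c_{V_0}$ is established in Section 3) centered near $0\in\mathcal{M}$, I get $\limsup_{\varepsilon\to0}c_\varepsilon\le c_{V_0}$. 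Condition $(g_3)$ plus the monotonicity of $t\mapsto\Gamma_\varepsilon(tu)/t^q$ gives a Nehari-type structure forcing Palais--Smale sequences at level $c_\varepsilon$ to be bounded; together with Lemma \ref{lem2-1}(iv) this produces a nonnegative critical point $u_\varepsilon\in H_\varepsilon$ with $\Gamma_\varepsilon(u_\varepsilon)=c_\varepsilon$.

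\textbf{Concentration and deactivation of the penalizations.} Take $\varepsilon_n\to0$. Using vanishing/dichotomy on the sequence $u_{\varepsilon_n}$, the level control $c_{\varepsilon_n}\le c_{V_0}+o(1)$, and a translated profile argument, I extract points $y_n\in\mathbb{R}^3$ such that $v_n(z)=u_{\varepsilon_n}(z+y_n)$ converges strongly in $H^s(\mathbb{R}^3)$ to a ground state $U$ of the limit problem with potential $V_\infty=\lim V(\varepsilon_n y_n)$. The mountain-pass comparison plus the fact that $c_{V}$ is strictly increasing in constant potential $V$ forces $V_\infty=V_0$, hence $\mathrm{dist}(\varepsilon_n y_n,\mathcal{M})\to0$, and also $y_n\in\Lambda/\varepsilon_n$ for $n$ large. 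Applying Lemma \ref{lem2-1-0} to the translated sequence $\{v_n\}$ gives a uniform $L^\infty$ bound, and a Moser iteration on $u_{\varepsilon_n}$ (using the kernel representation of $(-\Delta)^s$ and the bound on $\phi_{u_{\varepsilon_n}}^t$ from Lemma \ref{lem2-1}(iii)) yields $u_{\varepsilon_n}(z)\to0$ uniformly on $\mathbb{R}^3\setminus B_R(y_n)$ for any $R$. Consequently $\int_{\mathbb{R}^3\setminus\Lambda/\varepsilon_n}u_{\varepsilon_n}^2\,dz=o(\varepsilon_n)$, so $Q_{\varepsilon_n}(u_{\varepsilon_n})=0$, and $u_{\varepsilon_n}$ is below the del Pino--Felmer threshold outside $\Lambda/\varepsilon_n$; thus $u_{\varepsilon_n}$ actually solves \eqref{main-2-1}.

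\textbf{Polynomial decay and conclusion.} Since $(-\Delta)^s u_{\varepsilon_n}$ is bounded in $L^\infty$, Lemma \ref{lem2-2} delivers H\"older (or $C^{1,\alpha}$) regularity, so $u_{\varepsilon_n}$ attains a maximum at some $x_{\varepsilon_n}$; from the translated profile analysis $x_{\varepsilon_n}-y_n$ is bounded, hence $\varepsilon_n x_{\varepsilon_n}\to x_0\in\mathcal{M}$. To obtain the sharp decay, I note that outside a large ball $B_R(x_{\varepsilon_n})$ one has $g(u_{\varepsilon_n})\le\tfrac{V_0}{4}u_{\varepsilon_n}$ by $(g_0)$ and $\phi_{u_{\varepsilon_n}}^t(z)\to0$ uniformly in $n$ for $|z-x_{\varepsilon_n}|\to\infty$ (from the representation formula and the concentration of $u_{\varepsilon_n}^2$), so $(-\Delta)^s u_{\varepsilon_n}+\tfrac{V_0}{2}u_{\varepsilon_n}\le0$ there. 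Comparing with the explicit fractional barrier $W(z)=\kappa\bigl(1+|z-x_{\varepsilon_n}|^2\bigr)^{-(3+2s)/2}$, which satisfies $(-\Delta)^s W\ge -C\,W$ with $W$ decaying like $|z|^{-(3+2s)}$ (a classical property of such kernels), one deduces $u_{\varepsilon_n}(z)\le C/(C_0+|z-x_{\varepsilon_n}|^{3+2s})$. Unscaling via $x=\varepsilon z$ produces the stated bound. The hardest step is deactivating $Q_\varepsilon$ together with the del Pino--Felmer truncation: one must show simultaneously that the rescaled profile concentrates strongly inside $\Lambda/\varepsilon$ and that the nonlocal tails of $(-\Delta)^s u_\varepsilon$ and of $\phi_{u_\varepsilon}^t u_\varepsilon$ do not spoil the Moser iteration outside $\Lambda/\varepsilon$; this is precisely where the hypothesis $2s+2t>3$ and the finer part of Lemma \ref{lem2-1} are crucial.
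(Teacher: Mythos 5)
Your proposal does not prove the statement at hand. The statement is Lemma \ref{lem2-2}, a pure regularity estimate for the fractional Laplacian quoted from Silvestre \cite{S}: if $u\in L^{\infty}(\mathbb{R}^n)$ and $w=(-\Delta)^su\in L^{\infty}(\mathbb{R}^n)$, then $u\in C^{0,\alpha}$ (for $2s\leq1$, $\alpha\leq 2s$) or $u\in C^{1,\alpha}$ (for $2s>1$, $\alpha<2s-1$), with the corresponding norm bound $C(\|u\|_{L^{\infty}}+\|w\|_{L^{\infty}})$. In the paper this is a cited preliminary result, not something established by the variational machinery of Sections 3--5. What you wrote is instead a sketch of the proof of the main existence and concentration result (Theorem \ref{thm1-1}): penalized functional, mountain pass comparison with $c_{V_0}$, deactivation of the penalizations, and polynomial decay. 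None of that addresses the interior/global Schauder-type estimate asserted in the lemma, which concerns an arbitrary bounded $u$ with bounded $(-\Delta)^su$ and has nothing to do with the Schr\"odinger--Poisson structure, the potential $V$, or the nonlinearity $g$.

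Worse, your argument is circular with respect to the statement you were asked to prove: in the step ``Polynomial decay and conclusion'' you explicitly invoke Lemma \ref{lem2-2} to get H\"older (or $C^{1,\alpha}$) regularity of $u_{\varepsilon_n}$, so the lemma is being used, not proved. A genuine proof of the lemma would proceed along entirely different lines, e.g.\ Silvestre's original argument, or the representation of $u$ through Riesz/Bessel potentials of $w$ combined with translation-difference estimates of the kernel, or the Caffarelli--Silvestre extension together with interior estimates for the associated degenerate elliptic equation and a scaling argument to pass from local to global bounds; in each case the key point is converting the $L^{\infty}$ control of $(-\Delta)^su$ into modulus-of-continuity control of $u$ (and of $\nabla u$ when $2s>1$), with constants depending only on $n$, $s$, $\alpha$. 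As it stands, your proposal neither contains nor approximates such an argument.
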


\begin{lemma}\label{lem2-3}(\cite{Sec})
Assume that $\{u_n\}$ is bounded in $H^s(\mathbb{R}^N)$ and it satisfies
\begin{equation*}
\lim_{n\rightarrow+\infty}\sup_{y\in\mathbb{R}^N}\int_{B_R(y)}|u_n(x)|^2\,{\rm d}x=0
\end{equation*}
where $R>0$. Then $u_n\rightarrow0$ in $L^r(\mathbb{R}^N)$ for every $2<r<2_s^{\ast}$.
\end{lemma}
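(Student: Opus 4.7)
The plan is a standard covering argument, the fractional analogue of Lions' original vanishing lemma. I would fix a countable cover $\{B_{R}(y_{i})\}_{i \in \mathbb{N}}$ of $\mathbb{R}^{N}$ with bounded overlap $N_{0}$, apply a local H\"older interpolation between $L^{2}$ (which vanishes locally by hypothesis) and $L^{2_{s}^{\ast}}$ (in which $\{u_{n}\}$ is bounded via $H^{s}(\mathbb{R}^{N}) \hookrightarrow L^{2_{s}^{\ast}}(\mathbb{R}^{N})$), and sum using the bounded overlap. The strategy is to first establish the conclusion at a single distinguished exponent $r_{0} := 4 - 4/2_{s}^{\ast} \in (2, 2_{s}^{\ast})$ and then recover the full range $r \in (2, 2_{s}^{\ast})$ by global interpolation against the uniformly bounded $L^{2}$ and $L^{2_{s}^{\ast}}$ norms.

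The heart of the argument is a local H\"older bound obtained by writing $|u_{n}|^{r_{0}} = |u_{n}|^{r_{0} - 2} \cdot |u_{n}|^{2}$ and applying H\"older's inequality with the conjugate exponents $2_{s}^{\ast}/(2_{s}^{\ast} - 2)$ and $2_{s}^{\ast}/2$, which are conjugate exactly when $r_{0} = 4 - 4/2_{s}^{\ast}$. This yields
\begin{equation*}
\int_{B_{R}(y_{i})} |u_{n}|^{r_{0}}\,\mathrm{d}x \leq \|u_{n}\|_{L^{2}(B_{R}(y_{i}))}^{r_{0} - 2}\, \|u_{n}\|_{L^{2_{s}^{\ast}}(B_{R}(y_{i}))}^{2}.
\end{equation*}
Writing $M_{n} := \sup_{y \in \mathbb{R}^{N}} \|u_{n}\|_{L^{2}(B_{R}(y))}$, so that $M_{n} \to 0$ by hypothesis, and summing over $i$ produces
\begin{equation*}
\|u_{n}\|_{L^{r_{0}}(\mathbb{R}^{N})}^{r_{0}} \leq M_{n}^{r_{0}-2} \sum_{i} \|u_{n}\|_{L^{2_{s}^{\ast}}(B_{R}(y_{i}))}^{2}.
\end{equation*}

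I expect the main obstacle here to be controlling the residual sum uniformly in $n$, since the naive pointwise bound $\|u_{n}\|_{L^{2_{s}^{\ast}}(B_{R}(y_{i}))} \leq \|u_{n}\|_{L^{2_{s}^{\ast}}(\mathbb{R}^{N})}$ produces an infinite series. To handle this, I would invoke a local fractional Sobolev inequality of the form
\begin{equation*}
\|u\|_{L^{2_{s}^{\ast}}(B_{R}(y))}^{2} \leq C_{R}\bigl(\|u\|_{L^{2}(B_{R}(y))}^{2} + [u]_{H^{s}(B_{R}(y))}^{2}\bigr),
\end{equation*}
obtained by extending $u|_{B_{R}(y)}$ to $\mathbb{R}^{N}$ and applying the global Sobolev embedding. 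Summing and exploiting the bounded-overlap property of the Gagliardo double integral, $\sum_{i} [u]_{H^{s}(B_{R}(y_{i}))}^{2} \leq N_{0}\,[u]_{H^{s}(\mathbb{R}^{N})}^{2}$ (each pair $(x,z)$ belongs to at most $N_{0}$ product balls), gives the uniform bound $\sum_{i} \|u_{n}\|_{L^{2_{s}^{\ast}}(B_{R}(y_{i}))}^{2} \leq C\,\|u_{n}\|_{H^{s}(\mathbb{R}^{N})}^{2} \leq C$. Combining, $\|u_{n}\|_{L^{r_{0}}(\mathbb{R}^{N})}^{r_{0}} \leq C\,M_{n}^{r_{0}-2} \to 0$.

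To close the argument over the entire range, for $r \in (r_{0}, 2_{s}^{\ast})$ I would interpolate $\|u_{n}\|_{L^{r}} \leq \|u_{n}\|_{L^{r_{0}}}^{\theta}\,\|u_{n}\|_{L^{2_{s}^{\ast}}}^{1-\theta}$ with a suitable $\theta \in (0,1)$ and use the uniform boundedness of $\|u_{n}\|_{L^{2_{s}^{\ast}}}$, while for $r \in (2, r_{0})$ I would interpolate $\|u_{n}\|_{L^{r}} \leq \|u_{n}\|_{L^{2}}^{1-\theta}\,\|u_{n}\|_{L^{r_{0}}}^{\theta}$ and use the uniform boundedness of $\|u_{n}\|_{L^{2}}$. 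In both cases $u_{n} \to 0$ in $L^{r}(\mathbb{R}^{N})$, completing the proof.
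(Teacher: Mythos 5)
Your argument is correct. The paper itself states this lemma without proof, simply citing Secchi's article, and what you have written is precisely the standard fractional adaptation of Lions' vanishing lemma that one finds there: the Hölder split at the distinguished exponent $r_{0}=4-4/2_{s}^{\ast}$, followed by interpolation to cover all of $(2,2_{s}^{\ast})$. You also correctly identify and resolve the one genuinely delicate point in the fractional setting, namely that the local $L^{2_{s}^{\ast}}$ norms must be controlled through a local extension/Sobolev inequality together with the bounded-overlap subadditivity of the Gagliardo seminorm, rather than by the (divergent) global bound.
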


\section{Limiting problem}

In this section, we consider the "limiting problem" associated with problem \eqref{main-2-1}
\begin{equation}\label{equ3-1}
\left\{
  \begin{array}{ll}
    (-\Delta)^su+\mu u+\phi u=g(u) & \hbox{in $\mathbb{R}^3$,} \\
    (-\Delta)^t\phi=u^2,\,\, u>0& \hbox{in $\mathbb{R}^3$}
  \end{array}
\right.
\end{equation}
for $\mu>0$. We define the energy functional for the limiting problem \eqref{equ3-1} by
\begin{align*}
\mathcal{I}_{\mu}(u)&=\frac{1}{2}\int_{\mathbb{R}^3}|D_su|^2\,{\rm d}x+\frac{\mu}{2}\int_{\mathbb{R}^3}|u|^2\,{\rm d}x+\frac{1}{4}\int_{\mathbb{R}^3}\phi_u^tu^2\,{\rm d}x-\int_{\mathbb{R}^3}G(u)\,{\rm d}x\quad u\in H^s(\mathbb{R}^3).
\end{align*}
Let
\begin{align*}
\mathcal{P}_{\mu}(u)=&\frac{3-2s}{2}\int_{\mathbb{R}^3}|D_su|^2\,{\rm d}x+\frac{3}{2}\int_{\mathbb{R}^3}\mu|u|^2\,{\rm d}x+\frac{3+2t}{4}\int_{\mathbb{R}^3}\phi_{u}^tu^2\,{\rm d}x-3\int_{\mathbb{R}^3}G(u)\,{\rm d}x
\end{align*}
and
\begin{align*}
\mathcal{G}_{\mu}(u)&=(s+t)\langle \mathcal{I}'_{\mu}(u),u\rangle-\mathcal{P}_{\mu}(u)=\frac{4s+2t-3}{2}\int_{\mathbb{R}^3}|D_su|^2\,{\rm d}x+\frac{2s+2t-3}{2}\mu\int_{\mathbb{R}^3}|u|^2\,{\rm d}x\\
&+\frac{4s+2t-3}{4}\int_{\mathbb{R}^3}\phi_u^tu^2\,{\rm d}x+\int_{\mathbb{R}^3}\Big(3G(u)-(s+t)g(u)u\Big)\,{\rm d}x.
\end{align*}
We define the Nehari-Pohozaev manifold
\begin{equation*}
\mathcal{M}_{\mu}=\{u\in H^s(\mathbb{R}^3)\backslash\{0\}\,\,\Big|\,\, \mathcal{G}_{\mu}(u)=0\}
\end{equation*}
and set $b_{\mu}=\inf\limits_{u\in\mathcal{M}_{\mu}}\mathcal{I}_{\mu}(u)$. We list some properties of the manifold $\mathcal{M}_{\mu}$.
\begin{proposition}\label{pro3-2}
The set $\mathcal{M}_{\mu}$ possesses the following properties:\\
$(i)$ $0\not\in\partial\mathcal{M}_{\mu}$;\\
$(ii)$ for any $u\in H^s(\mathbb{R}^3)\backslash\{0\}$, there exists a unique $\tau_0:=\tau(u)>0$ such that $u_{\tau_0}\in\mathcal{M}_{\mu}$, where $u_{\tau}=\tau^{s+t}u(\tau x)$. Moreover,
\begin{equation*}
\mathcal{I}_{\mu}(u_{\tau_0})=\max_{\tau\geq0}\mathcal{I}_{\mu}(u_{\tau});
\end{equation*}
\end{proposition}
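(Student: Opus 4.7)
\textbf{Plan for Proposition \ref{pro3-2}.}

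\emph{Part (i).} On $\mathcal{M}_{\mu}$ the identity $\mathcal{G}_{\mu}(u)=0$ rewrites as
\begin{equation*}
\frac{4s+2t-3}{2}\|D_s u\|_2^2 + \frac{2s+2t-3}{2}\mu\|u\|_2^2 + \frac{4s+2t-3}{4}\int_{\mathbb{R}^3}\phi_u^t u^2\,dx = \int_{\mathbb{R}^3}\bigl((s+t)g(u)u-3G(u)\bigr)\,dx.
\end{equation*}
The assumption $2s+2t>3$ makes both coefficients on the left strictly positive, so the left side dominates $c\|u\|_{H^s}^2$ for some $c>0$ (the Poisson term is non-negative and may be discarded). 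Hypotheses $(g_0)$ and $(g_1)$ give, for every $\varepsilon>0$, a constant $C_{\varepsilon}$ with $(s+t)|g(u)u|+3|G(u)|\leq \varepsilon u^2+C_{\varepsilon}|u|^{2_s^{\ast}}$, and the Sobolev embedding $H^s\hookrightarrow L^{2_s^{\ast}}$ bounds the right side by $C\varepsilon\|u\|_{H^s}^2 + C\|u\|_{H^s}^{2_s^{\ast}}$. Choosing $\varepsilon$ small and absorbing into the left yields $\|u\|_{H^s}\geq c_0>0$ uniformly on $\mathcal{M}_{\mu}$, whence $0\notin\partial\mathcal{M}_{\mu}$.

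\emph{Part (ii).} A direct computation of the scaling $u_{\tau}(x)=\tau^{s+t}u(\tau x)$ gives
\begin{equation*}
\|D_s u_{\tau}\|_2^2=\tau^{4s+2t-3}\|D_s u\|_2^2,\quad \|u_{\tau}\|_2^2=\tau^{2s+2t-3}\|u\|_2^2,\quad \int\phi_{u_{\tau}}^t u_{\tau}^2\,dx=\tau^{4s+2t-3}\int\phi_u^t u^2\,dx,
\end{equation*}
and $\int G(u_{\tau})\,dx=\tau^{-3}\int G(\tau^{s+t}u(\xi))\,d\xi$. Writing $A_1:=\tfrac12\|D_su\|_2^2+\tfrac14\int\phi_u^tu^2\,dx>0$ and $A_2:=\tfrac{\mu}{2}\|u\|_2^2>0$, the fibering map
\begin{equation*}
h(\tau):=\mathcal{I}_{\mu}(u_{\tau})=A_1\tau^{4s+2t-3}+A_2\tau^{2s+2t-3}-\tau^{-3}\int_{\mathbb{R}^3}G(\tau^{s+t}u)\,d\xi
\end{equation*}
satisfies, by direct differentiation, $\tau h'(\tau)=\mathcal{G}_{\mu}(u_{\tau})$. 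Therefore finding $\tau_0$ with $u_{\tau_0}\in\mathcal{M}_{\mu}$ is equivalent to finding a critical point of $h$ on $(0,\infty)$. Hypotheses $(g_0)$--$(g_1)$ yield $\tau^{-3}\int G(\tau^{s+t}u)\,d\xi=o(\tau^{2s+2t-3})$ as $\tau\to 0^{+}$, so that the dominant term of $h(\tau)$ near zero is the positive $A_2\tau^{2s+2t-3}$; while $(g_2)$ gives $\tau^{-3}\int G(\tau^{s+t}u)\,d\xi\geq(\lambda/q)\tau^{q(s+t)-3}\|u\|_q^q$, which (since $q(s+t)>4s+2t$) overwhelms the positive terms as $\tau\to+\infty$, forcing $h(\tau)\to-\infty$. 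Hence $h$ attains a global maximum at some $\tau_0>0$ with $\mathcal{G}_{\mu}(u_{\tau_0})=0$.

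For uniqueness, I would exploit $(g_3)$ in two forms: first, the pointwise inequality $g(v)v\geq qG(v)$ (obtained by differentiating $g(v)/v^{q-1}$), which together with $q>p:=(4s+2t)/(s+t)$ gives $(G(v)/v^p)'=(g(v)v-pG(v))/v^{p+1}\geq 0$, so $G(v)/v^p$ is non-decreasing; this makes $\tau\mapsto \tau^{-(4s+2t)}\int G(\tau^{s+t}u)\,d\xi$ non-decreasing, and consequently $\tau\mapsto h(\tau)/\tau^{4s+2t-3}$ strictly decreasing. Second, at any critical point $\tau_{\ast}$ one computes $\tau_{\ast}h''(\tau_{\ast})=(d/d\tau)\mathcal{G}_{\mu}(u_{\tau})\big|_{\tau=\tau_{\ast}}$ and, after substituting the identity $\mathcal{G}_{\mu}(u_{\tau_{\ast}})=0$, applies $vg'(v)\geq(q-1)g(v)$ once more to show $h''(\tau_{\ast})<0$. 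Hence every critical point is a strict local maximum, which forces uniqueness of $\tau_0$ and gives $\mathcal{I}_{\mu}(u_{\tau_0})=\max_{\tau\geq 0}\mathcal{I}_{\mu}(u_{\tau})$.

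\emph{Main obstacle.} The delicate step is uniqueness. Because $h$ mixes two scalings, $\tau^{4s+2t-3}$ (from the kinetic and Poisson terms) and $\tau^{2s+2t-3}$ (from the mass term), the standard Nehari trick of dividing through by a single power of $\tau$ does not, by itself, give strict monotonicity of $h'$, and the quantity $\tau^{-(4s+2t)}\int\Gamma(\tau^{s+t}u)\,d\xi$ with $\Gamma(v):=(s+t)vg(v)-3G(v)$ need not be monotone under $(g_3)$ alone. Squeezing the required strict concavity at every critical point therefore requires using \emph{both} consequences of $(g_3)$, namely $g(v)v\geq qG(v)$ and $vg'(v)\geq(q-1)g(v)$, in combination with $q(s+t)>4s+2t$ from $(g_2)$.
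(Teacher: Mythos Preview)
Your plan is correct and close in spirit to the paper. For part~(i) and for the existence half of part~(ii) you do exactly what the paper calls ``standard'' and omits. The only substantive content is uniqueness, and here your second approach (every critical point of $h$ is a strict local maximum via $vg'(v)\geq(q-1)g(v)$) is essentially the same computation the paper performs, just organised differently.

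The paper's packaging is slightly cleaner than yours and avoids the detour through $h''$. It assumes two values $\tau<\tau_1$ with $\mathcal{G}_{\mu}(u_{\tau})=\mathcal{G}_{\mu}(u_{\tau_1})=0$, divides both identities by $\tau^{4s+2t-3}$, and subtracts. On one side this leaves only $\tfrac{2s+2t-3}{2}\mu\,(\tau_1^{-2s}-\tau^{-2s})\|u\|_2^2<0$; on the other side it leaves the difference of
\[
\Psi(\tau):=\int_{\mathbb{R}^3}\frac{(s+t)g(\tau^{s+t}u)\tau^{s+t}u-3G(\tau^{s+t}u)}{\tau^{4s+2t}}\,d\xi.
\]
The whole proof then reduces to showing $\Psi$ is non-decreasing, which is a one-line differentiation using $(g_3)$ in the form $vg'(v)\geq(q-1)g(v)$ together with $q(s+t)>4s+2t$. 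This is precisely the monotonicity you flagged as the ``main obstacle'': the point is that one should test monotonicity not of $h(\tau)/\tau^{4s+2t-3}$ or of $\tau^{-(4s+2t)}\int G(\tau^{s+t}u)$, but of the combination $\Psi$ that already incorporates the $3G$-term. Your first approach stalls exactly because it separates $G$ from $g(u)u$; the paper's trick is to keep them together in $\Psi$, which makes the kinetic and Poisson terms drop out after division and leaves the strictly monotone mass term to produce the contradiction.
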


\begin{proof}
The proof of $(i)$ and $(ii)$ is standard, it is only to prove the uniqueness of $\tau$ of $(ii)$. Indeed, if there exist $\tau_1>\tau>0$ such that $u_{\tau_1},u_{\tau}\in\mathcal{M}_{\mu}$, then
\begin{equation*}
\mathcal{G}_{\mu}(u_{\tau_1})=0,\quad \mathcal{G}_{\mu}(u_{\tau})=0.
\end{equation*}
By simple computation, we have
\begin{align*}
&\frac{2s+2t-3}{2}(\frac{1}{\tau_1^{2s}}-\frac{1}{\tau^{2s}})\int_{\mathbb{R}^3}\mu|u|^2\,{\rm d}x\\
&=\int_{\mathbb{R}^3}\Big(\frac{(s+t)g(u_{\tau_1})u_{\tau_1}-3G(u_{\tau_1})}{\tau_1^{4s+2t-3}}-\frac{(s+t)g(u_{\tau})u_{\tau}-3G(u_{\tau})}{\tau^{4s+2t-3}}\Big)\,{\rm d}x\\
&+\frac{2_s^{\ast}(s+t)-3}{2_s^{\ast}}(\tau_1^{(2_s^{\ast}-4)s+(2_s^{\ast}-2)t}-\tau^{(2_s^{\ast}-4)s+(2_s^{\ast}-2)t})\int_{\mathbb{R}^3}(u^{+})^{2_s^{\ast}}\,{\rm d}x\\
&=\int_{\mathbb{R}^3}\Big(\frac{(s+t)g(\tau_1^{s+t}u)\tau_1^{s+t}u-3G(\tau_1^{s+t}u)}{\tau_1^{4s+2t}}-\frac{(s+t)g(\tau^{s+t}u)\tau^{s+t}u-3G(\tau^{s+t}u)}{\tau^{4s+2t}}\Big)\,{\rm d}x.
\end{align*}
If we show that the function $\tau\in\mathbb{R}^{+}\rightarrow\frac{(s+t)g(\tau^{s+t}u)\tau^{s+t}u-3G(\tau^{s+t}u)}{\tau^{4s+2t}}$ is non-decreasing, then we get a contradiction and the uniqueness is proved. In fact, by computation and using $(g_3)$, we deduce that
\begin{align*}
&\Big(\frac{(s+t)g(\tau^{s+t}u)\tau^{s+t}u-3G(\tau^{s+t}u)}{\tau^{4s+2t}}\Big)'=\frac{1}{\tau^{4s+2t+1}}\Big(3(4s+2t)G(\tau^{s+t}u)\\
&-(s+t)(3s+t+3)g(\tau^{s+t}u)\tau^{s+t}u+(s+t)^2g'(\tau^{s+t}u)\tau^{2(s+t)}u^2\Big)\\
&\geq\frac{1}{\tau^{4s+2t+1}}\Big[\Big((s+t)^2(q-1)-(s+t)(3s+t+3)\Big)g(\tau^{s+t}u)\tau^{s+t}u+3(4s+2t)G(\tau^{s+t}u)\Big]\\
&>0.
\end{align*}
\end{proof}

\begin{lemma}\label{lem3-1}
$\mathcal{I}_{\mu}$ possesses the mountain pass geometry:\\
$(i)$ there exist $\rho_0,\beta_0>0$ such that $\mathcal{I}_{\mu}(u)\geq\beta_0$ for all $u\in H^s(\mathbb{R}^3)$ with $\|u\|=\rho_0$;\\
$(ii)$ there exists $u_0\in H^s(\mathbb{R}^3)$ such that $\mathcal{I}_{\mu}(u_0)<0$.
\end{lemma}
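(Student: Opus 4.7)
The plan is to verify the two standard geometric conditions for the mountain pass theorem.

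For part $(i)$, I would exploit $(g_0)$ and $(g_1)$ to obtain the usual small–large power bound: for every $\eta>0$ there exists $C_\eta>0$ with
\[
G(\tau)\le \eta\,\tau^2+C_\eta\,|\tau|^{2_s^{\ast}},\qquad \forall\,\tau\in\mathbb R.
\]
Since $\phi_u^t\ge 0$ (from the integral representation in terms of the Riesz-type kernel $|x-y|^{-(3-2t)}$), the nonlocal term contributes nonnegatively, so
\[
\mathcal{I}_{\mu}(u)\ge \tfrac{1}{2}\min\{1,\mu\}\,\|u\|^{2}-\eta\,\|u\|_2^{2}-C_\eta\,\|u\|_{2_s^{\ast}}^{2_s^{\ast}}.
\]
Using $\|u\|_2\le \mu^{-1/2}\|u\|$ and the continuous embedding $H^{s}(\mathbb R^3)\hookrightarrow L^{2_s^{\ast}}(\mathbb R^3)$, I get $\mathcal{I}_\mu(u)\ge c_1\|u\|^2-c_2\|u\|^{2_s^{\ast}}$ after choosing $\eta$ small enough. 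Since $2_s^{\ast}>2$, picking $\rho_0>0$ sufficiently small yields a uniform lower bound $\beta_0>0$ on the sphere $\|u\|=\rho_0$.

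For part $(ii)$, I would fix any $u\in H^{s}(\mathbb R^3)\setminus\{0\}$ with $u\ge0$, $u\not\equiv 0$, and consider the family $u_\tau(x)=\tau^{s+t}u(\tau x)$ used in Proposition \ref{pro3-2}. A change of variables gives the scaling identities
\[
\int_{\mathbb R^{3}}|D_{s}u_\tau|^{2}\,{\rm d}x=\tau^{4s+2t-3}\!\int_{\mathbb R^{3}}|D_{s}u|^{2}\,{\rm d}x,
\qquad \int_{\mathbb R^{3}}|u_\tau|^{2}\,{\rm d}x=\tau^{2s+2t-3}\!\int_{\mathbb R^{3}}|u|^{2}\,{\rm d}x,
\]
and, by expanding the double-integral expression for $\int\phi_{u_\tau}^{t}u_\tau^{2}$ and rescaling both variables,
\[
\int_{\mathbb R^{3}}\phi_{u_\tau}^{t}u_\tau^{2}\,{\rm d}x=\tau^{4s+2t-3}\!\int_{\mathbb R^{3}}\phi_{u}^{t}u^{2}\,{\rm d}x.
\]
Assumption $(g_2)$ together with $g(\tau)=0$ for $\tau<0$ gives $G(\tau)\ge \frac{\lambda}{q}\tau_+^{q}$, hence
\[
\int_{\mathbb R^{3}}G(u_\tau)\,{\rm d}x\ge \frac{\lambda}{q}\,\tau^{q(s+t)-3}\!\int_{\mathbb R^{3}}u^{q}\,{\rm d}x.
\]

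Combining these, $\mathcal{I}_\mu(u_\tau)$ is bounded above by a linear combination of $\tau^{4s+2t-3}$ and $\tau^{2s+2t-3}$ minus a positive multiple of $\tau^{q(s+t)-3}$. The hypothesis $q>\frac{4s+2t}{s+t}$ is precisely what ensures $q(s+t)-3>4s+2t-3\ge 2s+2t-3$, so the negative term dominates as $\tau\to\infty$ and $\mathcal{I}_{\mu}(u_\tau)\to-\infty$. Choosing $\tau_1$ large and setting $u_0:=u_{\tau_1}$ concludes $(ii)$.

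The only delicate point is the bookkeeping of the scaling exponents, especially for the nonlocal Coulomb-type term, and ensuring that the exponent $q(s+t)-3$ strictly dominates the quadratic contributions; this is exactly where the precise lower bound on $q$ in $(g_2)$ enters. No compactness argument is required, so no real obstacle is expected beyond this computation.
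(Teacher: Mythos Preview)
Your proposal is correct and follows essentially the same route as the paper: both parts use the estimate $G(\tau)\le \eta\tau^2+C_\eta|\tau|^{2_s^{\ast}}$ from $(g_0)$--$(g_1)$ together with positivity of $\int\phi_u^t u^2$ for $(i)$, and the scaling family $u_\tau(x)=\tau^{s+t}u(\tau x)$ combined with $(g_2)$ and the exponent comparison $q(s+t)-3>4s+2t-3$ for $(ii)$. The only cosmetic difference is that the paper fixes $\eta=\mu/2$ directly rather than invoking $\min\{1,\mu\}$ and absorbing constants afterward.
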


\begin{proof}
By $(g_0)$ and $(g_1)$, for any $\eta>0$, there exists $C_{\eta}>0$ such that
\begin{equation}\label{equ3-4}
g(t)\leq\eta|t|+C_{\eta}|t|^{2_s^{\ast}-1}\quad \text{and}\quad G(t)\leq\frac{\eta}{2}|t|^2+C_{\eta}|t|^{2_s^{\ast}}\quad \text{for any}\,\, t\in\mathbb{R}.
\end{equation}
Hence, choosing $\eta=\frac{\mu}{2}$ and by Sobolev inequality, we have that
\begin{align*}
\mathcal{I}_{\mu}(u)&\geq\frac{1}{2}\int_{\mathbb{R}^3}|D_su|^2\,{\rm d}x+\frac{\mu}{2}\int_{\mathbb{R}^3}|u|^2\,{\rm d}x-\frac{\eta}{2}\int_{\mathbb{R}^3}|u|^2\,{\rm d}x-C_{\eta}\int_{\mathbb{R}^3}|u|^{2_s^{\ast}}\,{\rm d}x\\
&\geq\frac{1}{4}\|u\|^2-C_{\mu}\|u\|^{2_s^{\ast}},
\end{align*}
thus, there exists $\rho_0,\beta_0>0$ small enough such that $\mathcal{I}_{\mu}(u)\geq\beta_0$ for $\|u\|=\rho_0$.

$(ii)$ For any $u\in H^s(\mathbb{R}^3)$ with $u\geq0$, set $u_{\tau}(x)=\tau^{(s+t)}u(\tau x)$ with $\tau>0$. Thus, by $(f_3)$, we deduce that
\begin{align*}
\mathcal{I}_{\mu}(u_{\tau})&\leq\frac{\tau^{(4s+2t-3)}}{2}\int_{\mathbb{R}^3}|D_su|^2\,{\rm d}x+\frac{\tau^{(2s+2t-3)}}{2}\int_{\mathbb{R}^3}\mu|u|^2\,{\rm d}x\\
&+\frac{\tau^{(4s+2t-3)}}{4}\int_{\mathbb{R}^3}\phi_u^tu^2\,{\rm d}x-C\tau^{q(s+t)-3}\int_{\mathbb{R}^3}|u|^q\,{\rm d}x.
\end{align*}
Since $4s+2t>3$ and so $4s+2t-3<q(s+t)-3$, we obtain that $\mathcal{I}_{\mu}(u_{\tau})\rightarrow-\infty$ as $\tau\rightarrow+\infty$. Hence, there exists $\tau_0>0$ large enough such that $\mathcal{I}_{\mu}(u_0)<0$, where $u_0=u_{\tau_0}$.
\end{proof}

From Lemma \ref{lem3-1}, we can define the mountain-pass level of $\mathcal{I}_{\mu}$ as follows
\begin{equation*}
c_{\mu}=\inf_{\gamma\in\Gamma_{\mu}}\sup_{t\in[0,1]}\mathcal{I}_{\mu}(\gamma(t))
\end{equation*}
where
\begin{equation*}
\Gamma_{\mu}=\Big\{\gamma\in C([0,1],H^s(\mathbb{R}^3))\,\,\Big|\,\, \gamma(0)=0,\,\, \mathcal{I}_{\mu}(\gamma(1))<0\Big\}
\end{equation*}
and $c_{\mu}>0$. By the condition $(f_3)$ and using Lemma \ref{lem3-1}, we can show the equivalent characterization of mountain-pass level $c_{\mu}$.
\begin{lemma}\label{lem3-2}
\begin{equation*}
c_{\mu}=b_{\mu}.
\end{equation*}
\end{lemma}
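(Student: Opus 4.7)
\medskip

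The plan is to prove $c_\mu=b_\mu$ by a double inequality, exploiting the scaling $u_\tau(x)=\tau^{s+t}u(\tau x)$ that underlies the definition of $\mathcal{M}_\mu$. The key preliminary observation is that a direct computation of $\frac{d}{d\tau}\mathcal{I}_\mu(u_\tau)$ yields
\begin{equation*}
\frac{d}{d\tau}\mathcal{I}_\mu(u_\tau)=\frac{1}{\tau}\mathcal{G}_\mu(u_\tau),
\end{equation*}
so that for every $u\in H^s(\mathbb{R}^3)\setminus\{0\}$ the function $h_u(\tau):=\mathcal{I}_\mu(u_\tau)$ satisfies $h_u(0)=0$, $h_u(\tau)\to-\infty$ as $\tau\to\infty$ (by the estimate used in Lemma \ref{lem3-1}(ii)), $h_u(\tau)>0$ for small $\tau>0$ (because the same coercivity argument as in Lemma \ref{lem3-1}(i) applied to $u_\tau$ gives $\mathcal{I}_\mu(u_\tau)\geq \tfrac{1}{4}\|u_\tau\|^2-C\|u_\tau\|^{2_s^*}>0$ for small $\|u_\tau\|$), and by Proposition \ref{pro3-2}(ii) has a unique critical point $\tau(u)$, which is therefore a strict global maximum. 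In particular, $\mathcal{G}_\mu(u_\tau)>0$ for $\tau<\tau(u)$ and $\mathcal{G}_\mu(u_\tau)<0$ for $\tau>\tau(u)$.

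For the upper bound $c_\mu\leq b_\mu$, I take any $u\in\mathcal{M}_\mu$ (so $\tau(u)=1$) and fix $T>1$ so large that $\mathcal{I}_\mu(u_T)<0$. The path $\gamma(t):=u_{tT}$ for $t\in(0,1]$, extended by $\gamma(0):=0$, is continuous (since $\|u_\tau\|\to 0$ as $\tau\to 0$) and lies in $\Gamma_\mu$. Along this curve,
\begin{equation*}
\max_{t\in[0,1]}\mathcal{I}_\mu(\gamma(t))=\max_{\tau\in[0,T]}h_u(\tau)=h_u(1)=\mathcal{I}_\mu(u),
\end{equation*}
so $c_\mu\leq\mathcal{I}_\mu(u)$; taking the infimum over $\mathcal{M}_\mu$ gives $c_\mu\leq b_\mu$.

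For the reverse inequality $c_\mu\geq b_\mu$, I would use an intermediate value argument on $\mathcal{G}_\mu\circ\gamma$. For any $\gamma\in\Gamma_\mu$, continuity of $\gamma$ at $0$ together with the quadratic expansion of $\mathcal{G}_\mu$ near the origin (using $(g_0)$--$(g_1)$ and the positivity of the coefficients $\tfrac{4s+2t-3}{2}$ and $\tfrac{2s+2t-3}{2}$, which is where $2s+2t>3$ is used) shows that $\mathcal{G}_\mu(\gamma(t))>0$ for all sufficiently small $t>0$; on the other hand, the bell-shape of $h_u$ forces $\mathcal{G}_\mu(\gamma(1))<0$, because $\mathcal{I}_\mu(\gamma(1))<0$ can only occur on the decreasing branch past the maximum of $h_{\gamma(1)}$. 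The intermediate value theorem then produces $t^*\in(0,1)$ with $\gamma(t^*)\neq 0$ and $\mathcal{G}_\mu(\gamma(t^*))=0$, i.e.\ $\gamma(t^*)\in\mathcal{M}_\mu$; therefore $\max_{t\in[0,1]}\mathcal{I}_\mu(\gamma(t))\geq\mathcal{I}_\mu(\gamma(t^*))\geq b_\mu$, and taking the infimum over $\gamma\in\Gamma_\mu$ gives $c_\mu\geq b_\mu$.

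The main obstacle I anticipate is the clean handling of the sign of $\mathcal{G}_\mu$ at the two endpoints of the path: near $0$ one needs the strict positivity coming from $2s+2t-3>0$ in the $L^2$ coefficient of $\mathcal{G}_\mu$ (not merely $4s+2t-3>0$), and at $\gamma(1)$ one needs the structural fact $\mathcal{I}_\mu(u)<0\Rightarrow \mathcal{G}_\mu(u)<0$, which in turn hinges on the strict unimodality of $h_u$ that was established via Proposition \ref{pro3-2}(ii). Everything else is a routine application of the intermediate value theorem and of the uniqueness of the projection onto $\mathcal{M}_\mu$.
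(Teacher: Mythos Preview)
Your proof is correct and follows the same overall strategy as the paper: both establish $c_\mu\geq b_\mu$ by showing that every path $\gamma\in\Gamma_\mu$ must cross $\mathcal{M}_\mu$, via an intermediate value argument on $\mathcal{G}_\mu\circ\gamma$. Your treatment is in fact more explicit than the paper's, which omits the direction $c_\mu\leq b_\mu$ entirely and does not spell out why $\mathcal{G}_\mu(\gamma(t))>0$ for small $t$.

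The one technical difference is how the implication $\mathcal{I}_\mu(\gamma(1))<0\Rightarrow\mathcal{G}_\mu(\gamma(1))<0$ is obtained. You argue through the strict unimodality of $\tau\mapsto\mathcal{I}_\mu(u_\tau)$ (thus invoking Proposition~\ref{pro3-2}(ii)). The paper instead uses the algebraic identity
\[
(4s+2t-3)\,\mathcal{I}_\mu(u)=\mathcal{G}_\mu(u)+s\mu\int_{\mathbb{R}^3}|u|^2\,dx+\int_{\mathbb{R}^3}\big((s+t)g(u)u-(4s+2t)G(u)\big)\,dx,
\]
whose last two terms are nonnegative by $(g_3)$ and $q>\tfrac{4s+2t}{s+t}$; this gives the contrapositive $\mathcal{G}_\mu(u)\geq 0\Rightarrow\mathcal{I}_\mu(u)>0$ directly, without appealing to the uniqueness of the projection. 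Both routes are valid; the paper's identity is slightly more self-contained at this step, while your scaling argument makes the geometric picture clearer and also furnishes the path needed for $c_\mu\leq b_\mu$.
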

\begin{proof}
We only need to verify that $\gamma([0,1])\cap\mathcal{M}_{\mu}\neq\emptyset$. Indeed, by Lemma \ref{lem3-1}, we see that if $u\in H^s(\mathbb{R}^3)\backslash\{0\}$, is interior to or on $\mathcal{M}_{\mu}$, then
\begin{align*}
&\frac{4s+2t-3}{2}\int_{\mathbb{R}^3}|D_su|^2\,{\rm d}x+\frac{2s+2t-3}{2}\int_{\mathbb{R}^3}\mu|u|^2\,{\rm d}x+\frac{4s+2t-3}{4}\int_{\mathbb{R}^3}\phi_{u}^tu^2\,{\rm d}x\\
&\geq\int_{\mathbb{R}^3}\Big((s+t)g(u)u-3G(u)\Big)\,{\rm d}x
\end{align*}
and
\begin{align*}
(4s+2t-3)\mathcal{I}_{\mu}(u)&=\mathcal{G}_{\mu}(u)+s\int_{\mathbb{R}^3}\mu|u|^2\,{\rm d}x+\int_{\mathbb{R}^3}\Big((s+t)g(u)u-(4s+2t)G(u)\Big)\,{\rm d}x>0.
\end{align*}
Hence $\gamma$ crosses $\mathcal{M}_{\mu}$ since $\gamma(0)=0$, $\mathcal{I}_{\mu}(\gamma(1))<0$ which implies that $\mathcal{G}_{\mu}(\gamma(1))<0$, combining with $\mathcal{G}_{\mu}(\gamma(t))\geq0$. Therefore,
\begin{equation*}
\max_{t\in[0,1]}\mathcal{I}_{\mu}(\gamma(t))\geq\inf_{\mathcal{M}_{\mu}}\mathcal{I}_{\mu}(w)=b_{\mu}
\end{equation*}
and then $c_{\mu}\geq b_{\mu}$.
\end{proof}
In order to obtain the boundedness of $(PS)$ sequence, we will construct a $(PS)$ sequence $\{u_n\}$ for $\mathcal{I}_{\mu}$ at the level $c_{\mu}$ that satisfies $\mathcal{G}_{\mu}(u_n)\rightarrow0$ as $n\rightarrow+\infty$ i.e.,
\begin{lemma}\label{lem3-3}
There exists a sequence $\{u_n\}$ in $H^s(\mathbb{R}^3)$ such that as $n\rightarrow+\infty$,
\begin{equation}\label{equ3-5}
\mathcal{I}_{\mu}(u_n)\rightarrow c_{\mu},\quad\mathcal{I}'_{\mu}(u_n)\rightarrow0,\quad \mathcal{G}_{\mu}(u_n)\rightarrow0.
\end{equation}
\end{lemma}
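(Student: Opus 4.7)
The strategy is to apply Jeanjean's scaling trick, which has become standard for variational problems with a Pohozaev-type constraint. Define the auxiliary functional
\begin{equation*}
\widetilde{\mathcal{I}}_\mu(v, \theta) := \mathcal{I}_\mu(v_{e^\theta}), \qquad v_{e^\theta}(x) := e^{(s+t)\theta} v(e^\theta x),
\end{equation*}
on $H^s(\mathbb{R}^3) \times \mathbb{R}$. Routine scaling computations based on
\begin{equation*}
\int_{\mathbb{R}^3} |D_s v_\tau|^2\,{\rm d}x = \tau^{4s+2t-3} \int_{\mathbb{R}^3} |D_s v|^2\,{\rm d}x, \quad \int_{\mathbb{R}^3} \phi_{v_\tau}^t v_\tau^2\,{\rm d}x = \tau^{4s+2t-3} \int_{\mathbb{R}^3} \phi_v^t v^2\,{\rm d}x,
\end{equation*}
together with $\int v_\tau^2\,{\rm d}x = \tau^{2s+2t-3} \int v^2\,{\rm d}x$ and $\int G(v_\tau)\,{\rm d}x = \tau^{-3} \int G(\tau^{s+t}v)\,{\rm d}x$ (setting $\tau = e^\theta$), yield the key identity
\begin{equation*}
\partial_\theta \widetilde{\mathcal{I}}_\mu(v, \theta) = \mathcal{G}_\mu(v_{e^\theta}).
\end{equation*}
Thus a critical point in the $\theta$-slot encodes exactly the Pohozaev--Nehari relation $\mathcal{G}_\mu = 0$.

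Next I would introduce the augmented minimax class
\begin{equation*}
\widetilde{\Gamma}_\mu = \big\{\tilde{\gamma} \in C([0,1], H^s(\mathbb{R}^3) \times \mathbb{R}) : \tilde{\gamma}(0) = (0,0),\ \widetilde{\mathcal{I}}_\mu(\tilde{\gamma}(1)) < 0\big\}
\end{equation*}
and its level $\tilde{c}_\mu := \inf_{\tilde{\gamma} \in \widetilde{\Gamma}_\mu} \max_{t \in [0,1]} \widetilde{\mathcal{I}}_\mu(\tilde{\gamma}(t))$. The equality $\tilde{c}_\mu = c_\mu$ follows from two inclusions: $\gamma \mapsto (\gamma, 0)$ embeds $\Gamma_\mu$ into $\widetilde{\Gamma}_\mu$ preserving the maximum, while conversely $(v, \theta) \mapsto v_{e^\theta}$ pushes any $\tilde{\gamma} = (\gamma, \theta) \in \widetilde{\Gamma}_\mu$ to a member of $\Gamma_\mu$ with the same maximum value. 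The mountain-pass geometry of $\widetilde{\mathcal{I}}_\mu$ at $(0, 0)$ is inherited directly from Lemma \ref{lem3-1}.

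I would then apply Ekeland's variational principle to the functional $\tilde{\gamma} \mapsto \max_t \widetilde{\mathcal{I}}_\mu(\tilde{\gamma}(t))$ on $\widetilde{\Gamma}_\mu$, starting from a minimizing sequence of \emph{horizontal} paths of the form $(\gamma_n, 0)$. This furnishes $(v_n, \theta_n)$ with $\widetilde{\mathcal{I}}_\mu(v_n, \theta_n) \to c_\mu$, $\widetilde{\mathcal{I}}_\mu'(v_n, \theta_n) \to 0$ in $(H^s \times \mathbb{R})^\ast$, and crucially $\theta_n \to 0$ since the perturbation stays uniformly close to the horizontal paths in the sup-metric. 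Setting $u_n := (v_n)_{e^{\theta_n}}$, the first property gives $\mathcal{I}_\mu(u_n) \to c_\mu$, and the $\theta$-component of the second gives $\mathcal{G}_\mu(u_n) \to 0$. To recover $\mathcal{I}_\mu'(u_n) \to 0$, given $h \in H^s(\mathbb{R}^3)$ with $\|h\| = 1$ I would test against $\tilde{h} := h_{e^{-\theta_n}}$: then $(\tilde{h})_{e^{\theta_n}} = h$, so $\langle \mathcal{I}_\mu'(u_n), h\rangle = \partial_v \widetilde{\mathcal{I}}_\mu(v_n, \theta_n)[\tilde{h}]$, and $\|\tilde{h}\|_{H^s}$ is controlled by a constant depending only on $|\theta_n|$, hence uniformly bounded.

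The main technical obstacle is ensuring $\theta_n \to 0$, which is exactly what the identity $\tilde{c}_\mu = c_\mu$ together with the horizontal-path restriction provides: without this control the rescaling $v_n \mapsto u_n$ would distort $H^s$-norms uncontrollably and the PS-type conditions would not transfer cleanly. Everything else reduces to the scaling formulas above and the classical Ekeland argument for mountain-pass levels.
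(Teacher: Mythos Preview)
Your proposal is correct and follows essentially the same approach as the paper: the paper defines the map $\Phi(\theta,v)(x)=e^{(s+t)\theta}v(e^{\theta}x)$, establishes the identity $\tilde c_\mu=c_\mu$, invokes the general minimax principle (Willem, Theorem~2.8, which is the Ekeland-type argument you describe) to produce $(\theta_n,v_n)$ with $\theta_n\to 0$, and then reads off the three conclusions exactly via the derivative identity and the inverse scaling $\phi\mapsto e^{-(s+t)\theta_n}\phi(e^{-\theta_n}\cdot)$ that you use.
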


\begin{proof}
Define the map $\Phi:\mathbb{R}\times H^s(\mathbb{R}^3)\rightarrow H^s(\mathbb{R}^3)$ for $\theta\in\mathbb{R}$, $v\in H^s(\mathbb{R}^3)$ by
$\Phi(\theta,v)(x)=e^{(s+t)\theta}v(e^{\theta}x)$. By computation, for every $\theta\in\mathbb{R}$, $v\in H^s(\mathbb{R}^3)$, we see that the functional $\mathcal{I}_{\mu}\circ\Phi$ writes as
\begin{align*}
(\mathcal{I}_{\mu}\circ\Phi)(\theta,v)&=\frac{e^{(4s+2t-3)\theta}}{2}\int_{\mathbb{R}^3}|D_sv|^2\,{\rm d}x+\frac{e^{(2s+2t-3)\theta}}{2}\int_{\mathbb{R}^3}|v|^2\,{\rm d}x\\
&+\frac{e^{(4s+2t-3)\theta}}{4}\int_{\mathbb{R}^3}\phi_u^tu^2\,{\rm d}x-e^{3\theta}\int_{\mathbb{R}^3}G(e^{(s+t)}v)\,{\rm d}x.
\end{align*}
Similarly as the proof of $(i)$ of Lemma \ref{lem3-1}, we have that
\begin{equation*}
(\mathcal{I}_{\mu}\circ\Phi)(\theta,v)\geq \frac{1}{4}\|\Phi(\theta,v)\|^2-C\|\Phi(\theta,v)\|^{2_s^{\ast}}.
\end{equation*}
Thus, there exists $\rho_1,\alpha_1>0$ small such that $(\mathcal{I}_{\mu}\circ\Phi)(\theta,v)\geq \alpha_1$ for every $\theta\in\mathbb{R}$ and $v\in H^s(\mathbb{R}^3)$ with $\|\Phi(\theta,v)\|=\rho_1$. Moreover, we have that $(\mathcal{I}_{\mu}\circ\Phi)(0,u_0)<0$, where $w_0$ is given in Lemma \ref{lem3-1}. Hence, $\mathcal{I}_{\mu}\circ\Phi$ possesses the mountain-pass geometry in $\mathbb{R}\times H^s(\mathbb{R}^3)$. We define the mountain-pass level of $\mathcal{I}_{\mu}\circ\Phi$
\begin{equation*}
\widetilde{c_{\mu}}=\inf_{\widetilde{\gamma}\in\widetilde{\Gamma_{\mu}}}\max_{t\in[0,1]}(\mathcal{I}_{\mu}\circ\Phi)(\widetilde{\gamma}(t)),
\end{equation*}
where $\widetilde{\Gamma_{\mu}}=\{\widetilde{\gamma}\in C([0,1],\mathbb{R}\times H^s(\mathbb{R}^3))\,\, |\,\,\widetilde{\gamma}(0)=0,\,\, (\mathcal{I}_{\mu}\circ\Phi)(\widetilde{\gamma}(1))<0 \}$. Observe that $\Gamma_{\mu}=\{\Phi\circ\widetilde{\gamma}\,\,|\,\, \widetilde{\gamma}\in\widetilde{\Gamma_{\mu}}\}$, the mountain-pass level of $\mathcal{I}_{\mu}$ coincides with $\mathcal{I}_{\mu}\circ\Phi$, i.e., $c_{\mu}=\widetilde{c_{\mu}}$.

By the general minimax principle (\cite{Willem}, Theorem 2.8), there exists a sequence $\{(\theta_n,v_n)\}\subset\mathbb{R}\times H^s(\mathbb{R}^3)$ such that
\begin{align}\label{equ3-6}
(\mathcal{I}_{\mu}\circ\Phi)(\theta_n,v_n)\rightarrow c_{\mu},\quad (\mathcal{I}_{\mu}\circ\Phi)'(\theta_n,v_n)\rightarrow0, \quad \theta_n\rightarrow0.
\end{align}
The detailed proof refer the readers to see Proposition 3.4 in \cite{HL1}. For every $(h,\phi)\in\mathbb{R}\times H^s(\mathbb{R}^3)$, we deduce that
\begin{equation}\label{equ3-7}
(\mathcal{I}_{\mu}\circ\Phi(\theta_n,v_n))'(h,\phi)=\langle\mathcal{I}_{\mu}'(\Phi(\theta_n,v_n)),\Phi(\theta_n,\phi)\rangle+\mathcal{G}_{\mu}(\Phi(\theta_n,v_n))h.
\end{equation}
Taking $h=1$, $\phi=0$ in \eqref{equ3-7}, we get
\begin{equation*}
\mathcal{G}_{\mu}(\Phi(\theta_n,v_n))\rightarrow0.
\end{equation*}
For every $\phi\in H^s(\mathbb{R}^3)$, set $\varphi(x)=e^{-(s+t)\theta_n}\phi(e^{-\theta_n}x)$, $h=0$ in \eqref{equ3-7}, by \eqref{equ3-6}, we get
\begin{equation*}
\langle\mathcal{I}_{\mu}'(\Phi(\theta_n,v_n)),\phi\rangle=o_n(1)\|e^{-(s+t)\theta_n}\phi(e^{-\theta_n}x)\|=o_n(1)\|\phi\|.
\end{equation*}
Denoting $u_n=\Phi(\theta_n,v_n)$, combining with \eqref{equ3-6}, the conclusion follows.
\end{proof}

\begin{lemma}\label{lem3-4}
Every sequence $\{u_n\}\subset H^s(\mathbb{R}^3)$ satisfying \eqref{equ3-5} is bounded in $H^s(\mathbb{R}^3)$.
\end{lemma}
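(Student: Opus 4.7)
The plan is to combine $\mathcal{I}_\mu(u_n)$ and $\mathcal{G}_\mu(u_n)$ linearly so that the resulting functional is both bounded along $\{u_n\}$ (since $\mathcal{I}_\mu(u_n)\to c_\mu$ and $\mathcal{G}_\mu(u_n)\to 0$) and coercive in $H^s(\mathbb{R}^3)$. Concretely, I would study $\Theta_{A,B}(u):=A\mathcal{I}_\mu(u)-B\mathcal{G}_\mu(u)$ for suitable $A,B>0$ and show $\Theta_{A,B}(u)\gtrsim \|u\|^2$.

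The first ingredient is the inequality $g(\tau)\tau\geq qG(\tau)$ for all $\tau\geq 0$. Since $(g_3)$ says $g(\tau)/\tau^{q-1}$ is non-decreasing, writing $G(\tau)/\tau^q=\int_0^1 \bigl(g(r\tau)/(r\tau)^{q-1}\bigr)\,r^{q-1}\,{\rm d}r$ shows that $\tau\mapsto G(\tau)/\tau^q$ is non-decreasing on $(0,\infty)$, whose derivative gives the claim (recall $g(\tau)=0$ for $\tau<0$, so the inequality is trivial there).

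Next, a direct computation using the explicit formulas for $\mathcal{I}_\mu$ and $\mathcal{G}_\mu$ yields
\begin{align*}
\Theta_{A,B}(u)&=\tfrac{A-B(4s+2t-3)}{2}\|D_s u\|_2^2+\tfrac{A-B(2s+2t-3)}{2}\mu\|u\|_2^2\\
&\quad+\tfrac{A-B(4s+2t-3)}{4}\int_{\mathbb{R}^3}\phi_u^t u^2\,{\rm d}x+B(s+t)\int_{\mathbb{R}^3} g(u)u\,{\rm d}x-(A+3B)\int_{\mathbb{R}^3} G(u)\,{\rm d}x.
\end{align*}
Applying the pointwise inequality from the first step, the nonlinear remainder is bounded below by $\bigl[Bq(s+t)-(A+3B)\bigr]\int G(u)\,{\rm d}x$. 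Since $(g_2)$ gives $q(s+t)>4s+2t$ and hence $q(s+t)-3>4s+2t-3$, while $2s+2t>3$ forces $4s+2t-3>2s+2t-3>0$, I can choose $A,B>0$ with $B(4s+2t-3)<A<B(q(s+t)-3)$. With this choice all four bracketed coefficients are strictly positive; dropping the non-negative Coulomb and $G$-terms yields a constant $C>0$ such that
\begin{equation*}
\Theta_{A,B}(u)\;\geq\;C\bigl(\|D_s u\|_2^2+\mu\|u\|_2^2\bigr)\;\geq\;C'\|u\|^2.
\end{equation*}

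Finally, evaluating along $\{u_n\}$ gives $\Theta_{A,B}(u_n)=A\,c_\mu+o(1)$, so $C'\|u_n\|^2\leq A\,c_\mu+o(1)$, and the boundedness of $\{u_n\}$ in $H^s(\mathbb{R}^3)$ follows.

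The main obstacle is ensuring the admissible window $B(4s+2t-3)<A<B(q(s+t)-3)$ is non-empty with all four coefficients simultaneously positive; this is the precise point at which the hypotheses $(g_2)$ (providing $q>(4s+2t)/(s+t)$) and $2s+2t>3$ enter in an essential way. Note that $\mathcal{I}'_\mu(u_n)\to 0$ is not used in this lemma; only the energy level and the Nehari–Pohozaev identity $\mathcal{G}_\mu(u_n)\to 0$ are needed, which is why Lemma~\ref{lem3-3} was designed to produce a sequence with the extra property $\mathcal{G}_\mu(u_n)\to 0$.
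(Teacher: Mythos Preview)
Your proposal is correct and follows essentially the same approach as the paper: the paper takes the specific linear combination $\mathcal{I}_{\mu}(u_n)-\frac{1}{q(s+t)-3}\mathcal{G}_{\mu}(u_n)$, which in your notation corresponds to the boundary choice $A=B\bigl(q(s+t)-3\bigr)$ so that the $G$-coefficient vanishes exactly, and then reads off the positive coefficients $\frac{(q-4)s+(q-2)t}{2(q(s+t)-3)}$, $\frac{(q-2)(s+t)}{2(q(s+t)-3)}$ and the non-negative remainder $\frac{s+t}{q(s+t)-3}\int\bigl(g(u_n)u_n-qG(u_n)\bigr)\,{\rm d}x$. Your more flexible window $B(4s+2t-3)<A<B\bigl(q(s+t)-3\bigr)$ yields the same conclusion by the same mechanism.
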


\begin{proof}
By \eqref{equ3-5}, we deduce that
\begin{align*}
&c_{\mu}+o_n(1)=\mathcal{I}_{\mu}(u_n)-\frac{1}{q(s+t)-3}\mathcal{G}_{\mu}(u_n)\\
&=\frac{(q-4)s+(q-2)t}{2(q(s+t)-3)}\int_{\mathbb{R}^3}|D_su_n|^2\,{\rm d}x+\frac{(q-2)(s+t)}{2(q(s+t)-3)}\mu\int_{\mathbb{R}^3}|u_n|^2\,{\rm d}x\\
&+\frac{(q-4)s+(q-2)t}{4(q(s+t)-3)}\int_{\mathbb{R}^3}\phi_{u_n}^tu_n^2\,{\rm d}x+\frac{s+t}{q(s+t)-3}\int_{\mathbb{R}^3}\Big(g(u_n)u_n-qG(u_n)\Big)\,{\rm d}x
\end{align*}
which implies the boundedness of the sequence $\{u_n\}$ in $H^s(\mathbb{R}^3)$ due to $q>\frac{4s+2t}{s+t}$.

\end{proof}

By using the Vanishing Lemma \ref{lem2-3}, it is not difficult to deduce that the bounded sequence $\{u_n\}\subset H^s(\mathbb{R}^3)$ given in \eqref{equ3-5} is non-vanishing. That is,
\begin{lemma}\label{lem3-5}
There exists a sequence $\{x_n\}\subset\mathbb{R}^3$ and $R>0$, $\beta>0$ such that $\int_{B_R(x_n)}|u_n|^2\,{\rm d}x\geq\beta$.
\end{lemma}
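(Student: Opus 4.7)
The plan is to argue by contradiction. Suppose the conclusion fails; then for every $R>0$ one has $\lim_{n\rightarrow+\infty}\sup_{y\in\mathbb{R}^3}\int_{B_R(y)}|u_n|^2\,{\rm d}x=0$. Since $\{u_n\}$ is bounded in $H^s(\mathbb{R}^3)$ by Lemma \ref{lem3-4}, the Vanishing Lemma \ref{lem2-3} gives $u_n\rightarrow 0$ in $L^r(\mathbb{R}^3)$ for every $r\in(2,2_s^{\ast})$.

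The first step is to show $\int_{\mathbb{R}^3} g(u_n)u_n\,{\rm d}x\rightarrow 0$. Combining $(g_0)$ (near $0$), integration of the pointwise upper bound from $(g_1)$ (near $+\infty$), and the boundedness of $g$ on compact intervals bounded away from $0$, one obtains for every $\epsilon>0$ a constant $C_\epsilon>0$ and some $p\in(2,2_s^{\ast})$ such that
\begin{equation*}
|g(t)t|\leq\epsilon(|t|^2+|t|^{2_s^{\ast}})+C_\epsilon |t|^p,\quad\forall\,t\in\mathbb{R}.
\end{equation*}
Integrating and exploiting the boundedness of $\{u_n\}$ in $L^2\cap L^{2_s^{\ast}}$ together with $\|u_n\|_p\rightarrow 0$, we get $\limsup_n\int_{\mathbb{R}^3}|g(u_n)u_n|\,{\rm d}x\leq C\epsilon$, and the arbitrariness of $\epsilon$ delivers the claim.

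The second step is to feed this into the Palais--Smale relations \eqref{equ3-5}. Since $\mathcal{I}'_\mu(u_n)\rightarrow 0$ in the dual and $\{u_n\}$ is bounded, testing against $u_n$ yields
\begin{equation*}
\int_{\mathbb{R}^3}|D_s u_n|^2\,{\rm d}x+\mu\int_{\mathbb{R}^3}u_n^2\,{\rm d}x+\int_{\mathbb{R}^3}\phi_{u_n}^t u_n^2\,{\rm d}x=\int_{\mathbb{R}^3}g(u_n)u_n\,{\rm d}x+o(1)=o(1).
\end{equation*}
Because $\phi_{u_n}^t\geq 0$, every term on the left is non-negative, and so $\|u_n\|_{H^s}\rightarrow 0$. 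Consequently $\mathcal{I}_\mu(u_n)\rightarrow 0$, which contradicts $\mathcal{I}_\mu(u_n)\rightarrow c_\mu>0$; the strict positivity of $c_\mu$ follows from the mountain-pass geometry of Lemma \ref{lem3-1}(i).

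The only mildly technical point is the interpolation inequality for $|g(t)t|$ with small coefficients on the endpoint exponents $2$ and $2_s^{\ast}$; this is routine, and nothing else in the argument presents a serious obstacle. Note in particular that one does \emph{not} need to invoke the Poisson-term convergence from Lemma \ref{lem2-1}(iv), since non-negativity of $\phi_{u_n}^t u_n^2$ suffices to conclude $\|u_n\|_{H^s}\rightarrow 0$ once the nonlinear term is controlled.
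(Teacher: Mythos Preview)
Your argument is correct and follows exactly the route the paper intends: the paper does not give a detailed proof of Lemma \ref{lem3-5} but merely remarks that non-vanishing follows from the Vanishing Lemma \ref{lem2-3}, and your contradiction argument (vanishing $\Rightarrow$ $u_n\to 0$ in $L^r$ for $r\in(2,2_s^{\ast})$ $\Rightarrow$ $\int g(u_n)u_n\to 0$ $\Rightarrow$ $\|u_n\|\to 0$ via $\langle\mathcal{I}_\mu'(u_n),u_n\rangle=o(1)$ $\Rightarrow$ $\mathcal{I}_\mu(u_n)\to 0$, contradicting $c_\mu>0$) is the standard way to make this precise. The only minor remark is that your three-region splitting for $|g(t)t|$ is exactly what is needed here, since the paper's inequality \eqref{equ3-4} alone places the large constant on the critical exponent and would not suffice; your use of $(g_1)$ to get a small coefficient also on $|t|^{2_s^{\ast}}$ is the correct refinement.
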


Combining Lemma \ref{lem3-4} with Lemma \ref{lem3-3} and Lemma \ref{lem3-5}, we can show the existence of positive ground state solution for the limiting problem \eqref{equ3-1}.
\begin{proposition}\label{pro3-3}
Problem \eqref{equ3-1} possesses a positive ground state solution $u\in H^s(\mathbb{R}^3)$.
\end{proposition}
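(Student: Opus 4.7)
The plan is to run a translated-concentration argument on the Pohozaev-enhanced $(PS)$ sequence produced by Lemma \ref{lem3-3}, and then close the double inequality $\mathcal{I}_{\mu}(u)=c_{\mu}=b_{\mu}$ using the manifold characterisation together with a Fatou-type bound whose non-negativity comes from $(g_3)$.

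First I take the sequence $\{u_n\}\subset H^s(\mathbb{R}^3)$ given by Lemma \ref{lem3-3}, which is bounded by Lemma \ref{lem3-4} and non-vanishing by Lemma \ref{lem3-5}: there exist $\{x_n\}\subset\mathbb{R}^3$, $R,\beta>0$ with $\int_{B_R(x_n)}u_n^2\,{\rm d}x\geq\beta$. Setting $\widetilde u_n(x)=u_n(x+x_n)$ and using translation invariance of $\mathcal{I}_{\mu}$ (the potential $\mu$ is constant) together with that of $\mathcal{G}_{\mu}$, the sequence $\{\widetilde u_n\}$ still satisfies \eqref{equ3-5}. Extracting a weakly convergent subsequence $\widetilde u_n\rightharpoonup u$ in $H^s(\mathbb{R}^3)$ and pointwise a.e., compact embedding of $H^s$ into $L^2_{\mathrm{loc}}$ combined with the lower mass bound on $B_R(0)$ forces $u\not\equiv0$. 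Lemma \ref{lem2-1}(iv) (which requires exactly our hypothesis $2s+2t>3$) lets me pass to the limit in the Choquard-type term $\int\phi_{\widetilde u_n}^t\widetilde u_n\varphi$ and in $\int g(\widetilde u_n)\varphi$ for every $\varphi\in H^s(\mathbb{R}^3)$, so $\mathcal{I}'_{\mu}(u)=0$. In particular $u$ enjoys the Pohozaev identity of the limit problem, hence $\mathcal{G}_{\mu}(u)=0$, i.e.\ $u\in\mathcal{M}_{\mu}$, which yields $\mathcal{I}_{\mu}(u)\geq b_{\mu}=c_{\mu}$ by Lemma \ref{lem3-2}.

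The reverse inequality is the heart of the matter. I rewrite, exactly as in the proof of Lemma \ref{lem3-4},
\begin{align*}
\mathcal{I}_{\mu}(\widetilde u_n)-\tfrac{1}{q(s+t)-3}\mathcal{G}_{\mu}(\widetilde u_n)
&=\tfrac{(q-4)s+(q-2)t}{2(q(s+t)-3)}\!\int|D_s\widetilde u_n|^2+\tfrac{(q-2)(s+t)}{2(q(s+t)-3)}\mu\!\int|\widetilde u_n|^2\\
&\quad+\tfrac{(q-4)s+(q-2)t}{4(q(s+t)-3)}\!\int\phi^{t}_{\widetilde u_n}\widetilde u_n^{2}+\tfrac{s+t}{q(s+t)-3}\!\int\bigl(g(\widetilde u_n)\widetilde u_n-qG(\widetilde u_n)\bigr).
\end{align*}
All four integrands are pointwise non-negative: the first three because $q>\tfrac{4s+2t}{s+t}$, and the last because $(g_3)$ with the non-decreasing monotonicity of $g(\tau)/\tau^{q-1}$ gives $g(\tau)\tau\geq qG(\tau)$ for $\tau\geq0$. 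Applying weak $H^s$-lower semicontinuity to the kinetic and mass terms, Fatou's lemma to the remaining two (using a.e.\ convergence of $\widetilde u_n$ and $\phi^t_{\widetilde u_n}\to\phi^t_{u}$ a.e.\ along a subsequence), together with $\mathcal{G}_{\mu}(u)=0$ and $\mathcal{G}_{\mu}(\widetilde u_n)\to0$, yields
$$
\mathcal{I}_{\mu}(u)=\mathcal{I}_{\mu}(u)-\tfrac{1}{q(s+t)-3}\mathcal{G}_{\mu}(u)\leq\liminf_{n\to\infty}\Bigl(\mathcal{I}_{\mu}(\widetilde u_n)-\tfrac{1}{q(s+t)-3}\mathcal{G}_{\mu}(\widetilde u_n)\Bigr)=c_{\mu}.
$$
Hence $\mathcal{I}_{\mu}(u)=c_{\mu}=b_{\mu}$ and $u$ is a ground state of \eqref{equ3-1}. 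This Fatou estimate is the step I expect to be the subtlest, since it is exactly the place where the weak gap between the kinetic/potential and the Coulomb/nonlinear terms must be absorbed; the hypothesis $q>\tfrac{4s+2t}{s+t}$ is crucial here and is the reason the non-standard Nehari--Pohozaev combination $\mathcal{G}_{\mu}$ is used rather than plain Nehari.

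It remains to upgrade $u$ to a positive function. Since $g(\tau)\equiv0$ for $\tau<0$, testing the equation against $u^{-}\in H^s(\mathbb{R}^3)$ and using the pointwise inequality $(u(x)-u(y))(u^{-}(x)-u^{-}(y))\geq |u^{-}(x)-u^{-}(y)|^{2}$ produces
$$
\|D_s u^{-}\|_{2}^{2}+\mu\|u^{-}\|_{2}^{2}+\int\phi^{t}_{u}(u^{-})^{2}\leq0,
$$
so $u^{-}\equiv0$ and $u\geq0$. Then Lemma \ref{lem2-1-0} gives $u\in L^\infty$, Lemma \ref{lem2-2} gives continuity, and the strong maximum principle for $(-\Delta)^s$ applied to $(-\Delta)^s u+(\mu+\phi^t_u)u=g(u)\geq0$ with $u\not\equiv0$ rules out any interior zero, giving $u>0$ on $\mathbb{R}^3$. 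This concludes the proof of Proposition \ref{pro3-3}.
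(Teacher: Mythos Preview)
Your argument is correct and follows the same overall strategy as the paper: translate the Pohozaev-augmented $(PS)$ sequence from Lemma~\ref{lem3-3}, identify the weak limit as a nontrivial critical point lying on $\mathcal{M}_{\mu}$, and sandwich the energy by applying Fatou to a non-negative combination $\mathcal{I}_{\mu}-\lambda\,\mathcal{G}_{\mu}$. The one technical divergence worth flagging is your choice $\lambda=\tfrac{1}{q(s+t)-3}$ (borrowed from Lemma~\ref{lem3-4}) versus the paper's $\lambda=\tfrac{1}{4s+2t-3}$: the paper's coefficient annihilates the gradient and Poisson contributions entirely, so the Fatou step involves only $\tfrac{s\mu}{4s+2t-3}\int|\widetilde u_n|^2$ and the nonlinear remainder $\tfrac{s+t}{4s+2t-3}\int\bigl(g(\widetilde u_n)\widetilde u_n-\tfrac{4s+2t}{s+t}G(\widetilde u_n)\bigr)$. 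From equality in that chain the paper then reads off $\|\widetilde u_n\|_2\to\|\widetilde u\|_2$, and via Brezis--Lieb plus interpolation upgrades to strong $H^s$-convergence of $\widetilde u_n$ to $\widetilde u$. Your version is perfectly adequate for Proposition~\ref{pro3-3} as stated, but it does not yield this strong convergence, which the paper relies on again in the compactness argument of Proposition~\ref{pro3-4}; if you want to recover it you would need the paper's coefficient or an additional step.
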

\begin{proof}
Let $\{u_n\}$ be the sequence given in \eqref{equ3-5}. Set $\widetilde{u}_n(x)=u_n(x+x_n)$, where $\{x_n\}$ is the sequence obtained in Lemma \ref{lem3-5}. Thus $\{\widetilde{u}_n\}$ is still bounded in $H^s(\mathbb{R}^3)$ and so up to a subsequence, still denoted by $\{\widetilde{u}_n\}$, we may assume that there exists $\widetilde{u}\in H^s(\mathbb{R}^3)$ such that
\begin{equation*}
\left\{
  \begin{array}{ll}
    \widetilde{u}_n\rightharpoonup \widetilde{u} & \hbox{in $H^s(\mathbb{R}^3)$,} \\
    \widetilde{u}_n\rightarrow \widetilde{u} & \hbox{in $L_{loc}^p(\mathbb{R}^3)$ for all $1\leq p<2_s^{\ast}$,}\\
    \widetilde{u}_n\rightarrow \widetilde{u} & \hbox{a.e. $\mathbb{R}^3$.}
  \end{array}
\right.
\end{equation*}
It follows from Lemma \ref{lem3-5} that $\widetilde{u}$ is nontrivial. Moreover, $\widetilde{u}$ is a nontrivial solution of problem \eqref{equ3-1}, and so $\mathcal{G}_{\mu}(\widetilde{u})=0$. By Fatou's Lemma and \eqref{equ3-5}, we have
\begin{align*}
c_{\mu}&=b_{\mu}\leq\mathcal{I}_{\mu}(\widetilde{u})=\mathcal{I}_{\mu}(\widetilde{u})-\frac{1}{4s+2t-3}\mathcal{G}_{\mu}(\widetilde{u})=\frac{s}{4s+2t-3}\int_{\mathbb{R}^3}\mu|\widetilde{u}|^2\,{\rm d}x\\
&+\frac{s+t}{4s+2t-3}\int_{\mathbb{R}^3}\Big(f(\widetilde{u})\widetilde{u}-\frac{4s+2t}{s+t}F(\widetilde{u})\Big)\,{\rm d}x\\
&\leq\liminf_{n\rightarrow\infty}\Big[\frac{s+t}{4s+2t-3}\int_{\mathbb{R}^3}\Big(g(\widetilde{u}_n)\widetilde{u}_n-\frac{4s+2t}{s+t}G(\widetilde{u}_n)\Big)\,{\rm d}x+\frac{s}{4s+2t-3}\int_{\mathbb{R}^3}\mu|\widetilde{u}_n|^2\,{\rm d}x\Big]\\
&=\liminf_{n\rightarrow\infty}\Big[\mathcal{I}_{\mu}(\widetilde{u}_n)-\frac{1}{4s+2t-3}\mathcal{G}_{\mu}(\widetilde{u}_n)\Big]=\liminf_{n\rightarrow\infty}\Big[\mathcal{I}_{\mu}(u_n)-\frac{1}{4s+2t-3}\mathcal{G}_{\mu}(u_n)\Big]=c_{\mu}
\end{align*}
which implies that $\widetilde{u}_n\rightarrow \widetilde{u}$ in $H^s(\mathbb{R}^3)$. Indeed, from the above inequality, we get that
\begin{equation*}
\int_{\mathbb{R}^3}\widetilde{u}_n^2\,{\rm d}x\rightarrow\int_{\mathbb{R}^3}\widetilde{u}^2\,{\rm d}x.
\end{equation*}
By virtue of the Brezis-Lieb Lemma and interpolation argument, we conclude that
\begin{equation*}
\widetilde{u}_n\rightarrow\widetilde{u}\quad \text{in}\,\, L^r(\mathbb{R}^3)\,\, \text{for all}\,\, 2\leq r< 2_s^{\ast}.
\end{equation*}
Hence, from the standard arguments, it follows that $\widetilde{u}_n\rightarrow \widetilde{u}$ in $H^s(\mathbb{R}^3)$. Therefore, by Lemma \ref{lem3-2}, we conclude that $\mathcal{I}_{\mu}(\widetilde{u})=c_{\mu}$ and $\mathcal{I}'_{\mu}(\widetilde{u})=0$.

Next, we show that the ground state solution of \eqref{equ3-1} is positive. Indeed, by standard argument to the proof Proposition 4.4 in \cite{Teng2}, using Lemma \ref{lem2-2} two times and the hypothesis $(g_1)$, we have that $\widetilde{u}\in C^{2,\alpha}(\mathbb{R}^3)$ for some $\alpha\in(0,1)$ for $s>\frac{1}{2}$. Using $-\widetilde{u}^{-}$ as a testing function, it is easy to see that $\widetilde{u}\geq0$. Since $\widetilde{u}\in C^{2,\alpha}(\mathbb{R}^3)$, by Lemma 3.2 in \cite{NPV}, we have that
\begin{equation*}
(-\Delta)^s\widetilde{u}(x)=-\frac{C_s}{2}\int_{\mathbb{R}^3}\frac{\widetilde{u}(x+y)+\widetilde{u}(x-y)-2\widetilde{u}(x)}{|x-y|^{3+2s}}\,{\rm d}x\,{\rm d}y,\quad \forall\,\, x\in\mathbb{R}^3.
\end{equation*}
Assume that there exists $x_0\in\mathbb{R}^3$ such that $\widetilde{u}(x_0)=0$, then from $\widetilde{u}\geq0$ and $\widetilde{u}\not\equiv0$, we get
\begin{equation*}
(-\Delta)^s\widetilde{u}(x_0)=-\frac{C_s}{2}\int_{\mathbb{R}^3}\frac{\widetilde{u}(x_0+y)+\widetilde{u}(x_0-y)}{|x_0-y|^{3+2s}}\,{\rm d}x\,{\rm d}y<0.
\end{equation*}
However, observe that $(-\Delta)^s\widetilde{u}(x_0)=-\mu \widetilde{u}(x_0)-(\phi_{\widetilde{u}}^t\widetilde{u})(x_0)+f(\widetilde{u}(x_0))+\widetilde{u}(x_0)^{2_s^{\ast}-1}=0$, a contradiction. Hence, $\widetilde{u}(x)>0$, for every $x\in\mathbb{R}^3$. The proof is completed.

\end{proof}

Let $\mathcal{L}_{\mu}$ be the set of ground state solutions $W$ of \eqref{equ3-1} satisfying $W(0)=\max\limits_{\mathbb{R}^3}W(x)$. Then we obtain the following compactness of $\mathcal{L}_{\mu}$.

\begin{proposition}\label{pro3-4}
$(i)$ For each $\mu>0$, $\mathcal{L}_{\mu}$ is compact in $H^s(\mathbb{R}^3)$.\\
$(ii)$ $0<W(x)\leq\frac{C}{1+|x|^{3+2s}}$ for any $x\in\mathbb{R}^3$.
\end{proposition}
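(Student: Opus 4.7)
The plan is to prove $(i)$ by combining Lions-type concentration compactness with a uniform pointwise lower bound on elements of $\mathcal{L}_{\mu}$, and then derive $(ii)$ via a barrier argument tailored to the fractional Laplacian.

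For part $(i)$, I would take an arbitrary sequence $\{W_n\}\subset\mathcal{L}_{\mu}$. Each $W_n$ satisfies $\mathcal{I}_{\mu}(W_n)=c_{\mu}$, $\mathcal{I}_{\mu}'(W_n)=0$ and $\mathcal{G}_{\mu}(W_n)=0$, so Lemma \ref{lem3-4} yields boundedness in $H^s(\mathbb{R}^3)$, and up to a subsequence $W_n\rightharpoonup W$ weakly, a.e., and in $L^p_{\mathrm{loc}}$ for $p<2_s^{\ast}$. The key input is non-vanishing at the origin: testing the equation against $W_n$ and using $(g_0)$--$(g_1)$ yields a uniform lower bound $\|W_n\|\geq c_0>0$, and combined with a uniform $L^\infty$ bound (obtained by a Moser-type iteration depending only on the $H^s$ bound and the subcritical structure), the interpolation $\|W_n\|_{2_s^{\ast}}^{2_s^{\ast}}\leq\|W_n\|_\infty^{2_s^{\ast}-2}\|W_n\|_2^2$ forces $\|W_n\|_\infty\geq c_1>0$. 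Since $W_n(0)=\|W_n\|_\infty$ and Lemma \ref{lem2-2} provides uniform H\"older estimates, Arzel\`a--Ascoli yields local uniform convergence $W_n\to W$, so $W(0)\geq c_1$ and $W\not\equiv0$. Strong convergence in $H^s(\mathbb{R}^3)$ then follows exactly as in the proof of Proposition \ref{pro3-3}: Fatou's lemma applied to the representation of $\mathcal{I}_{\mu}-(4s+2t-3)^{-1}\mathcal{G}_{\mu}$ (whose integrand is nonnegative thanks to $(g_2)$--$(g_3)$), combined with $\mathcal{I}_{\mu}(W)\geq b_{\mu}=c_{\mu}$, forces $\|W_n\|_2\to\|W\|_2$, and together with weak convergence gives $H^s$-convergence; uniform convergence on compacts then preserves the normalization $W(0)=\max W$.

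For part $(ii)$, once $W\in\mathcal{L}_{\mu}$, Lemma \ref{lem2-2} combined with the decay of $W$ in $L^2$ gives $W\in C^{0,\alpha}\cap L^\infty$ with $W(x)\to 0$ as $|x|\to\infty$. Moreover, the Riesz representation $\phi_W^t(x)=c_t\int_{\mathbb{R}^3}W^2(y)|x-y|^{2t-3}\,{\rm d}y$ together with $W\in L^2\cap L^\infty$ yields $\phi_W^t(x)\to 0$ at infinity. Using $(g_0)$, there exists $R>0$ such that $g(W(x))\leq\tfrac{\mu}{4}W(x)$ and $\phi_W^t(x)\leq\tfrac{\mu}{4}$ for $|x|\geq R$, so the limit equation gives
\begin{equation*}
(-\Delta)^sW(x)+\frac{\mu}{2}W(x)\leq 0\qquad\text{for } |x|\geq R.
\end{equation*}
I would then introduce the barrier $\Psi(x)=M(1+|x|^{3+2s})^{-1}$ and verify, via a direct singular-integral computation splitting into the near region $\{|y-x|\leq |x|/2\}$ and the far region, that $(-\Delta)^s\Psi(x)\geq -\tfrac{\mu}{4}\Psi(x)$ for $|x|$ large. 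Choosing $M$ so large that $\Psi\geq W$ on $\overline{B_R}$, the weak maximum principle for $(-\Delta)^s+\mu/4$ on $\mathbb{R}^3\setminus B_R$ (applicable since $W-\Psi\to 0$ at infinity) yields $W\leq\Psi$ everywhere, which is the claimed decay; positivity of $W$ has already been established in the proof of Proposition \ref{pro3-3}.

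The main technical obstacle is the barrier estimate: controlling $(-\Delta)^s\Psi(x)$ from below by a quantity comparable to $|x|^{-(3+2s)}$, so that the zeroth-order absorption $\mu\Psi/2$ dominates. This is the well-known asymptotic behavior of the fractional Laplacian applied to Bessel-type profiles, but the explicit constants have to be tracked carefully. A secondary subtlety is the uniform $L^\infty$ bound used in part $(i)$: Lemma \ref{lem2-1-0} is stated for strongly convergent sequences, so I would replace it by a direct De Giorgi--Moser iteration that uses only the uniform $H^s$ bound and the subcritical growth of the full nonlinearity $g(u)+\phi_W^t u$, noting that the coupling term is uniformly bounded in $L^\infty$ by $C\|W_n\|_{H^s}^2$ via Lemma \ref{lem2-1}.
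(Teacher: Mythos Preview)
Your proposal is essentially correct and follows the same overall architecture as the paper, but there are a couple of technical choices where the paper takes a shorter path.

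For part $(i)$, the paper obtains the lower bound $W_n(0)\geq C_0>0$ directly from the maximum principle: since $0$ is a global maximum of $W_n$, one has $(-\Delta)^sW_n(0)\geq 0$, and the equation then forces $g(W_n(0))\geq\mu W_n(0)+\phi_{W_n}^t(0)W_n(0)\geq\mu W_n(0)$, which by $(g_0)$ gives $W_n(0)\geq C_0$. This bypasses your interpolation argument entirely. For the uniform $L^{\infty}$ bound, the paper does not run a fresh Moser iteration; instead it first shows (via a Lions/Fatou argument as in Proposition~\ref{pro3-3}) that a \emph{translated} sequence $\overline{W}_n=W_n(\cdot+x_n)$ converges strongly in $H^s$, and then applies Lemma~\ref{lem2-1-0} to that strongly convergent sequence, noting $\|W_n\|_{\infty}=\|\overline{W}_n\|_{\infty}$. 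Your direct iteration would also work, but note that Lemma~\ref{lem2-1} only gives $\phi_{W_n}^t\in\mathcal{D}^{t,2}\hookrightarrow L^{2_t^{\ast}}$, not $L^{\infty}$ as you state; this is still enough for Moser iteration, but the sentence should be corrected.

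For part $(ii)$, the paper avoids computing $(-\Delta)^s\Psi$ for your explicit profile $\Psi(x)=M(1+|x|^{3+2s})^{-1}$ by quoting Lemmas~4.2--4.3 of \cite{FQT}: after a rescaling there exists a continuous $U$ with $0<U(x)\leq C(1+|x|^{3+2s})^{-1}$ satisfying $(-\Delta)^sU+\tfrac{\mu}{2}U=0$ exactly on $\mathbb{R}^3\setminus B_R(0)$. The comparison is then pure maximum principle with the function $Z=(\mathbb{B}+1)U-\mathbb{A}W$. Your direct barrier computation is viable, but be aware that for $\Psi(x)\sim|x|^{-(3+2s)}$ one has $(-\Delta)^s\Psi(x)\sim -c\,\Psi(x)$ at infinity with a constant $c$ depending only on the \emph{shape} of $\Psi$ (essentially $\int\Psi/|x|^{3+2s}$), not on $M$; so to achieve $c<\mu/2$ for arbitrary $\mu>0$ you must rescale the profile, not just adjust its amplitude. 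Also, the paper simply drops the nonnegative term $\phi_W^tW$ rather than proving $\phi_W^t\to 0$ at infinity, which saves a step.
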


\begin{proof}
$(i)$  For any $W\in H^s(\mathbb{R}^3)$, we have
\begin{align*}
c_{\mu}&=\mathcal{I}_{\mu}(W)-\frac{1}{q(s+t)-3}\mathcal{G}_{\mu}(W)\\
&=\frac{(q-4)s+(q-2)t}{2(q(s+t)-3)}\int_{\mathbb{R}^3}|D_s W|^2\,{\rm d}x+\frac{(q-2)(s+t)}{2(q(s+t)-3)}\mu\int_{\mathbb{R}^3}W^2\,{\rm d}x\\
&+\frac{(q-4)s+(q-2)t}{4(q(s+t)-3)}\int_{\mathbb{R}^3}\phi_{W}^tW^2\,{\rm d}x+\frac{s+t}{q(s+t)-3}\int_{\mathbb{R}^3}\Big(g(W)W-qG(W)\Big)\,{\rm d}x
\end{align*}
which yields the boundedness of $\mathcal{L}{_\mu}$ in $H^s(\mathbb{R}^3)$.

Similar to the proof of Lemma \ref{lem3-5} and Proposition \ref{pro3-3}, we verify that for any bounded $\{W_n\}\subset\mathcal{L}_{\mu}$, up to a subsequence, there exist $\{x_n\}\subset \mathbb{R}^3$ and $\overline{W}_0\in H^s(\mathbb{R}^3)$ such that $\overline{W}_n(x):=W_n(x+x_n)\rightarrow \overline{W}_0$ in $H^s(\mathbb{R}^3)$. By Lemma \ref{lem2-1-0}, we see that
\begin{equation}\label{equ3-8}
\|W_n\|_{\infty}=\|\overline{W}_n\|_{\infty}\leq C,
\end{equation}
where $C$ is independent on $n$.

On the other hand, from the boundedness of $\{W_n\}$ in $H^s(\mathbb{R}^3)$, up to a subsequence, we may assume that there exists $W_0\in H^s(\mathbb{R}^3)$ such that $W_n\rightharpoonup W_0$ in $H^s(\mathbb{R}^3)$ and $W_n\rightarrow W_0$ in $L_{loc}^r(\mathbb{R}^3)$ for $1\leq r<2_s^{\ast}$ and $W_n\rightarrow W_0$ a.e. $\mathbb{R}^3$. Since $W_n$ is a solution of \eqref{equ3-1}, in view of Lemma \ref{lem2-2} and \eqref{equ3-8}, we see that $\|W_n\|_{C^{1,\alpha}(\mathbb{R}^3)}\leq C$ for some $\alpha\in(0,1)$, where $C$ depending only on $\alpha$ and $s$.  The Arzela-Ascoli's Theorem shows that $W_n(0)\rightarrow W_0(0)$ as $n\rightarrow\infty$. Since $W_n(0)$ is a global maximum for $W_n(x)$, then we have that
\begin{equation*}
0\leq(-\Delta)^sW_n(0)=-\mu W_n(0)-\phi_{W_n}^t(0)W_n(0)+g(W_n(0))
\end{equation*}
which leads to $W_n(0)\geq C_0>0$. Hence, $W_0(0)\geq C_0>0$, this means that $W_0$ is nontrivial.

Finally, similar arguments as in the proof of Proposition \ref{pro3-3}, we can show that $W_n\rightarrow W_0$ in $H^s(\mathbb{R}^3)$. This completes the proof that $\mathcal{L}_{\mu}$ is compact in $H^s(\mathbb{R}^3)$.

$(ii)$  By Lemma 4.2 and Lemma 4.3 in \cite{FQT}, by scaling, there exists a continuous function $U$ such that
\begin{equation*}
0<U(x)\leq \frac{C}{1+|x|^{3+2s}}
\end{equation*}
and
\begin{equation*}
(-\Delta)^sU+\frac{\mu}{2}U=0 \quad \text{on}\,\,\mathbb{R}^3\backslash B_R(0).
\end{equation*}
for some suitable $R>0$. By standard argument, using the fact that $W\in L^p(\mathbb{R}^3)\cap C^{1,\alpha}(\mathbb{R}^3)$ for all $2\leq p\leq\infty$, we infer that $\lim\limits_{|x|\rightarrow\infty}W(x)=0$. Thus there exists $R_1>0$ (we can choose $R_1>R$) large enough such that
\begin{align*}
(-\Delta)^sW+\frac{\mu}{2}W&=(-\Delta)^sW+\mu W-\frac{\mu}{2} W=g(W)-\phi_{W}^tW-\frac{\mu}{2}W\\
&\leq g(W)-\frac{\mu}{2}W\leq0
\end{align*}
for any $x\in\mathbb{R}^3\backslash B_{R_1}(0)$. Therefore, we have obtained that
\begin{equation}\label{equ3-11}
(-\Delta)^sU+\frac{\mu}{2}U\geq(-\Delta)^sW+\frac{\mu}{2}W\quad \text{on}\,\, \mathbb{R}^3\backslash B_{R_1}(0).
\end{equation}

Let $\mathbb{A}=\inf\limits_{B_{R_1}(0)}U>0$, $Z(x)=(\mathbb{B}+1)U-\mathbb{A}W$, where $\mathbb{B}=\|W\|_{\infty}\leq C<\infty$. We claim that $Z(x)\geq0$ for all $x\in\mathbb{R}^3$. If the claim is true, we have that
\begin{equation*}
0<W(x)\leq\frac{\mathbb{B}+1}{\mathbb{A}}U(x)\leq \frac{C}{1+|x|^{3+2s}}\quad \text{for all}\,\, x\in\mathbb{R}^3
\end{equation*}
and the conclusion is proved.

Suppose by contradiction that there exists $\{x_n\}\subset\mathbb{R}^3$ such that
\begin{equation}\label{equ3-12}
\inf_{x\in\mathbb{R}^3}Z(x)=\lim_{n\rightarrow\infty}Z(x_n)<0.
\end{equation}
Since $\lim\limits_{|x|\rightarrow\infty}U(x)=\lim\limits_{|x|\rightarrow\infty}W(x)=0$ by virtue of \eqref{equ3-11}, then $\lim\limits_{|x|\rightarrow\infty}Z(x)=0$. Hence, sequence $\{x_n\}$ must be bounded and then up to a subsequence, we may assume that $x_n\rightarrow x_0\in\mathbb{R}^3$. From \eqref{equ3-12} and the continuity of $Z(x)$, we have that
\begin{equation*}
\inf_{x\in\mathbb{R}^3}Z(x)=Z(x_0)<0
\end{equation*}
which yields
\begin{align*}
(-\Delta)^sZ(x_0)+\frac{\mu}{2}Z(x_0)&=\frac{\mu}{2}Z(x_0)-\frac{C_s}{2}
\int_{\mathbb{R}^3}\frac{Z(x_0+y)+Z(x_0-y)-2Z(x_0)}{|x-y|^{3+2s}}\,{\rm d}y\\
&<0.
\end{align*}
Note that $Z(x)\geq \mathbb{A}\mathbb{B}+U-\mathbb{A}\mathbb{B}>0$ on $B_{R_1}(0)$, this leads to $x_0\in\mathbb{R}^3\backslash B_{R_1}(0)\subset\mathbb{R}^3\backslash B_R(0)$. From \eqref{equ3-11}, we have that
\begin{align*}
(-\Delta)^sZ(x_0)+\frac{\mu}{2}Z(x_0)&=\Big[(\mathbb{B}+1)\Big((-\Delta)^sU+\frac{\mu}{2}U\Big)-\mathbb{A}\Big((-\Delta)^sW+\frac{\mu}{2}W\Big)\Big]\Big|_{x=x_0}\\
&\geq0
\end{align*}
which is a contradiction. Thus, the claim holds true and the proof is completed.

\end{proof}

\section{The penalization scheme}

For the bounded domain $\Lambda$ given in $(V_1)$, $k>2$, $a>0$ such that $g(a)=\frac{V_0}{k}a$ where $V_0$ is defined in $(V_0)$, we consider a new problem
\begin{equation}\label{main-4-1}
(-\Delta)^su+V(\varepsilon z)u+\phi_u^t u=f(\varepsilon z,u) \quad \text{in}\,\,\mathbb{R}^3,
\end{equation}
where $f(\varepsilon z,\tau)=\chi_{\Lambda_{\varepsilon}}(\varepsilon z)g(\tau)+(1-\chi_{\Lambda_{\varepsilon}}(\varepsilon z))\tilde{g}(\tau)$ with
\begin{equation*}
\tilde{g}(\tau)=\left\{
  \begin{array}{ll}
    f(\tau) & \hbox{if $\tau\leq a$,} \\
    \frac{V_0}{k}\tau & \hbox{if $\tau>a$}
  \end{array}
\right.
\end{equation*}
and $\chi_{\Lambda_{\varepsilon}}(\varepsilon z)=1$ if $z\in\Lambda_{\varepsilon}$, $\chi(z)=0$ if $z\not\in\Lambda_{\varepsilon}$, where $\Lambda_{\varepsilon}=\Lambda/\varepsilon$. It is easy to see that under the assumptions $(g_0)$-$(g_3)$, $f(z,\tau)$ is a Caratheodory function and satisfies the following assumptions:\\
$(f_1)$ $f(z,\tau)=o(\tau)$ as $\tau\rightarrow0$ uniformly on $z\in\mathbb{R}^3$;\\
$(f_2)$ $f(z,\tau)\leq g(\tau)$ for all $\tau\in\mathbb{R}^{+}$ and $z\in\mathbb{R}^3$, $f(z,\tau)=0$ for all $z\in\mathbb{R}^3$ and $\tau<0$, $f(z,\tau)=G(\tau)$ for $z\in\mathbb{R}^3$, $\tau\in[0,a]$;\\
$(f_3)$ $0<2\tilde{G}(\tau)\leq\tilde{g}(\tau)\tau\leq\frac{V_0}{k}\tau^2\leq\frac{V(x)}{k}\tau^2$ for all $s\geq0$ with the number $k>2$, where $\tilde{G}(\tau)$ is a prime function of $\tilde{g}$;\\
$(f_4)$ $\frac{f(z,s\tau)}{\tau}$ is nondecreasing in $\tau\in\mathbb{R}^{+}$ uniformly for $z\in\mathbb{R}^3$, $\frac{f(z,s\tau)}{\tau^{q-1}}$ is nondecreasing in $\tau\in\mathbb{R}^{+}$ and $z\in\Lambda$, $\frac{f(z,s\tau)}{\tau^{q-1}}$ is nondecreasing in $\tau\in(0,a)$ and $z\in\mathbb{R}^3\backslash\Lambda$.

Obviously, if $u_{\varepsilon}$ is a solution of \eqref{main-4-1} satisfying $u_{\varepsilon}(z)\leq a$ for $z\in\mathbb{R}^3$, then $u_{\varepsilon}$ is indeed a solution of the original problem \eqref{R-1}.

For $u\in H_{\varepsilon}$, let
\begin{equation*}
P_{\varepsilon}(u)=\frac{1}{2}\int_{\mathbb{R}^3}(|D_su|^2+V(\varepsilon z)u^2)\,{\rm d}z+\frac{1}{4}\int_{\mathbb{R}^3}\phi_u^tu^2\,{\rm d}z-\int_{\mathbb{R}^3}F(\varepsilon z,u)\,{\rm d}z.
\end{equation*}
We define
\begin{equation*}
Q_{\varepsilon}(v)=\Big(\int_{\mathbb{R}^3\backslash\Lambda_{\varepsilon}}v^2\,{\rm d}z-\varepsilon\Big)_{+}^2.
\end{equation*}
This type of penalization was firstly introduced in \cite{BW}, which will act as a penalization to force the concentration phenomena to occur inside $\Lambda$. Let us define the functional $\mathcal{J}_{\varepsilon}: H_{\varepsilon}\rightarrow\mathbb{R}$ as follows
\begin{equation*}
\mathcal{J}_{\varepsilon}(u)=P_{\varepsilon}(u)+Q_{\varepsilon}(u).
\end{equation*}
Clearly, $\mathcal{J}_{\varepsilon}\in C^1(H_{\varepsilon},\mathbb{R})$. To find solutions of \eqref{main-4-1} which concentrates in $\Lambda$ as $\varepsilon\rightarrow0$, we shall search critical points of $\mathcal{J}_{\varepsilon}$ such that $Q_{\varepsilon}$ is zero.

Now, we construct a set of approximate solutions of \eqref{main-4-1}. Set
\begin{equation*}
\delta_0=\frac{1}{10}{\rm dist}(\mathcal{M},\mathbb{R}^3\backslash\Lambda),\quad \beta\in(0,\delta_0).
\end{equation*}
We fix a cut-off function $\varphi\in C_0^{\infty}(\mathbb{R}^3)$ such that $0\leq\varphi\leq1$, $\varphi=1$ for $|z|\leq\beta$, $\varphi=0$ for $|z|\geq 2\beta$ and $|\nabla\varphi|\leq C/\beta$. Set $\varphi_{\varepsilon}(z)=\varphi(\varepsilon z)$, for any $W\in \mathcal{L}_{V_0}$ and any point $y\in\mathcal{M}^{\beta}=\{y\in\mathbb{R}^3\,\,|\,\,\inf\limits_{z\in\mathcal{M}}|y-z|\leq\beta\}$, we define
\begin{equation*}
W_{\varepsilon}^{y}(z)=\varphi_{\varepsilon}(z-\frac{y}{\varepsilon})W(z-\frac{y}{\varepsilon}).
\end{equation*}
Similarly, for $A\subset H_{\varepsilon}$, we use the notation
\begin{equation*}
A^{a}=\{u\in H_{\varepsilon}\,\,\Big|\,\,\inf_{v\in A}\|u-v\|_{H_{\varepsilon}}\leq a\}.
\end{equation*}

We want to find a solution near the set
\begin{equation*}
\mathcal{N}_{\varepsilon}=\{W_{\varepsilon}^{y}(z)\,\,\Big|\,\,y\in\mathcal{M}^{\beta}, \,\, W\in \mathcal{L}_{V_0}\}
\end{equation*}
for $\varepsilon>0$ sufficiently small.
\begin{lemma}\label{lem4-0-1}
$\mathcal{N}_{\varepsilon}$ is uniformly bounded in $H_{\varepsilon}$ and it is compact in $H_{\varepsilon}$ for any $\varepsilon>0$.
\end{lemma}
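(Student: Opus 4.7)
The plan is to realize $\mathcal{N}_{\varepsilon}$ as the image of the compact product set $\overline{\mathcal{M}^{\beta}}\times\mathcal{L}_{V_0}$ under the evaluation map $\Psi:(y,W)\mapsto \varphi_{\varepsilon}(\cdot-y/\varepsilon)W(\cdot-y/\varepsilon)$. By Proposition \ref{pro3-4}, $\mathcal{L}_{V_0}$ is compact (hence bounded) in $H^s(\mathbb{R}^3)$; and $\overline{\mathcal{M}^{\beta}}$ is compact in $\mathbb{R}^3$ since $\mathcal{M}\subset\Lambda$ is bounded. Once I verify that $\Psi$ is continuous into $H_{\varepsilon}$, uniform boundedness and compactness of $\mathcal{N}_{\varepsilon}$ both follow: the image of a compact set under a continuous map is compact, and compact subsets of a Banach space are bounded.

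For boundedness, I use $\|W\|_{H^s}\leq M$ uniformly over $\mathcal{L}_{V_0}$. Writing $W_{\tau}=W(\cdot-y/\varepsilon)$, I split
\begin{equation*}
\varphi_{\varepsilon}(x)W_{\tau}(x)-\varphi_{\varepsilon}(z)W_{\tau}(z)=\varphi_{\varepsilon}(x)\bigl(W_{\tau}(x)-W_{\tau}(z)\bigr)+W_{\tau}(z)\bigl(\varphi_{\varepsilon}(x)-\varphi_{\varepsilon}(z)\bigr),
\end{equation*}
so that the Gagliardo seminorm of $\varphi_{\varepsilon}W_{\tau}$ is bounded by $2\|\varphi_{\varepsilon}\|_{\infty}^2[W]_s^2+2\|W\|_2^2\cdot K(\varepsilon,\beta)$, where $K(\varepsilon,\beta)$ is the finite quantity produced by the diagonal/off-diagonal split of $|\varphi_{\varepsilon}(x)-\varphi_{\varepsilon}(z)|^2/|x-z|^{3+2s}$ (Lipschitz near the diagonal via $|\nabla\varphi_{\varepsilon}|\leq C\varepsilon/\beta$, bounded far from it). Combined with $\int V(\varepsilon z)|W_{\varepsilon}^{y}|^2\,{\rm d}z\leq\|V\|_{\infty}\|W\|_2^2$, this yields $\|W_{\varepsilon}^{y}\|_{H_{\varepsilon}}\leq C(\varepsilon,\beta,M)$ uniformly in $(y,W)$.

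For compactness I take $(y_n,W_n)\to(y_0,W_0)$ in $\mathbb{R}^3\times H^s(\mathbb{R}^3)$ and use the three-term decomposition
\begin{align*}
\Psi(y_n,W_n)-\Psi(y_0,W_0)&=\varphi_{\varepsilon}(\cdot-y_n/\varepsilon)(W_n-W_0)(\cdot-y_n/\varepsilon)\\
&\quad+[\varphi_{\varepsilon}(\cdot-y_n/\varepsilon)-\varphi_{\varepsilon}(\cdot-y_0/\varepsilon)]\,W_0(\cdot-y_n/\varepsilon)\\
&\quad+\varphi_{\varepsilon}(\cdot-y_0/\varepsilon)[W_0(\cdot-y_n/\varepsilon)-W_0(\cdot-y_0/\varepsilon)].
\end{align*}
The first piece tends to zero by the same product estimate applied to $W_n-W_0$, whose $H^s$-norm vanishes. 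The third piece tends to zero by translation continuity in $H^s(\mathbb{R}^3)$ combined with boundedness of multiplication by the fixed smooth cutoff $\varphi_{\varepsilon}(\cdot-y_0/\varepsilon)$. For the second piece I use $\|\varphi_{\varepsilon}(\cdot-y_n/\varepsilon)-\varphi_{\varepsilon}(\cdot-y_0/\varepsilon)\|_{\infty}\leq(C/\beta)|y_n-y_0|\to0$ and a radius-$\delta$ split of the Gagliardo double integral, optimizing $\delta$ in the small parameter to extract a bound of the form $C\|W_0\|_2^2\cdot|y_n-y_0|^{2-2s}$.

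The main obstacle is the Gagliardo-seminorm estimate of products, because $H^s(\mathbb{R}^3)$ is not a multiplicative algebra for $s<3/2$. The way around this is that in every product appearing above, one factor is the smooth compactly supported function $\varphi_{\varepsilon}$ (or a difference of two of its translates), so the double integral is controlled by the pointwise Lipschitz control of the cutoff near the diagonal together with the integrable tail $|x-z|^{-3-2s}$ away from it. This near/far-diagonal splitting, valid precisely because $s\in(0,1)$, is the common technical ingredient in both the boundedness and the compactness parts of the argument.
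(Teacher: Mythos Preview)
Your approach is essentially the paper's: both reduce the lemma to a product estimate of the form $\|\varphi_{\varepsilon}(\cdot-y/\varepsilon)W(\cdot-y/\varepsilon)\|_{H_{\varepsilon}}\leq C\|W\|_{H^s}$ via the split $\varphi W(x)-\varphi W(z)=\varphi(x)(W(x)-W(z))+W(z)(\varphi(x)-\varphi(z))$, and both invoke the compactness of $\mathcal{L}_{V_0}$ (Proposition~\ref{pro3-4}) together with that of $\mathcal{M}^{\beta}$ for the second claim. Your three-term decomposition for the continuity of $\Psi$ is in fact more explicit than the paper, which after establishing \eqref{equ4-0-1} simply says ``it is easy to show that $W_n\to W_0$ in $H_{\varepsilon}$''.

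One point to flag: in this lemma ``uniformly bounded'' means uniformly also in $\varepsilon$; this is what is actually used later (e.g.\ to bound the critical points $u_{\varepsilon}\in\mathcal{N}_{\varepsilon}^{d_0}$ uniformly in $\varepsilon$ in the proof of Theorem~\ref{thm1-1}). Deriving boundedness from compactness only gives an $\varepsilon$-by-$\varepsilon$ bound, since the map $\Psi$ itself depends on $\varepsilon$, and you recorded your direct estimate with the constant $C(\varepsilon,\beta,M)$. The fix is painless: your own near/far split already gives $\sup_{z}|D_s\varphi_{\varepsilon}|^2(z)\leq C\bigl((\varepsilon/\beta)^{2}+1\bigr)$, which is uniformly bounded for small $\varepsilon$. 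The paper achieves $\varepsilon$-independence by a slightly different device: it pairs $W^2$ with $|D_s\varphi_{\varepsilon}|^2$ in H\"older exponents $L^{2_s^{\ast}/2}\times L^{3/(2s)}$, and then uses that $\int_{\mathbb{R}^3}|D_s\varphi_{\varepsilon}|^{3/s}\,{\rm d}z$ is scale-invariant, hence equal to the corresponding integral for $\varphi$ and independent of $\varepsilon$.
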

\begin{proof}
For any $W_{\varepsilon}^y\in\mathcal{N}_{\varepsilon}$, by H\"{o}lder's inequality, we have
\begin{align*}
\|W_{\varepsilon}^y\|_{H_{\varepsilon}}^2&=\int_{\mathbb{R}^3}|D_s(\varphi_{\varepsilon}W)|^2\,{\rm d }z+\int_{\mathbb{R}^3}V(\varepsilon z+y)\varphi_{\varepsilon}^2(z)W^2(z)\,{\rm d}z\\
&\leq2\int_{\mathbb{R}^3}\varphi_{\varepsilon}^2|D_s W|^2\,{\rm d}z+2\int_{\mathbb{R}^3}W^2|D_s\varphi_{\varepsilon}|^2\,{\rm d}z\\
&+\sup_{y\in\mathcal{M}^{\beta},z\in B_{2\beta/\varepsilon}(0)}V(\varepsilon z+y)\int_{B_{2\beta/\varepsilon}(0)}\varphi_{\varepsilon}^2(z)W^2(z)\,{\rm d}z\\
&\leq 2\int_{\mathbb{R}^3}|D_s W|^2\,{\rm d}z+C\int_{\mathbb{R}^3}W^2\,{\rm d}z+2\Big(\int_{\mathbb{R}^3}W^{2_s^{\ast}}\,{\rm d}z\Big)^{\frac{2}{2_s^{\ast}}}\Big(\int_{\mathbb{R}^3}|D_s\varphi_{\varepsilon}|^{\frac{3}{s}}\,{\rm d}z\Big)^{\frac{2s}{3}}
\end{align*}
and directly computations, we get
\begin{align*}
&\int_{\mathbb{R}^3}\Big|\int_{\mathbb{R}^3}\frac{|\varphi_{\varepsilon}(z)-\varphi_{\varepsilon}(y)|^2}{|z-y|^{3+2s}}\,{\rm d}y\Big|^{\frac{3}{2s}}\,{\rm d}z=\int_{\mathbb{R}^3}\Big|\int_{\mathbb{R}^3}\frac{|\varphi(z)-\varphi(y)|^2}{|z-y|^{3+2s}}\,{\rm d}y\Big|^{\frac{3}{2s}}\,{\rm d}z\\
&=\int_{\mathbb{R}^3\backslash B_{2\beta}(0)}\Big|\int_{\mathbb{R}^3}\frac{|\varphi(z)-\varphi(y)|^2}{|z-y|^{3+2s}}\,{\rm d}y\Big|^{\frac{3}{2s}}\,{\rm d}z+\int_{B_{2\beta}(0)}\Big|\int_{\mathbb{R}^3}\frac{|\varphi(z)-\varphi(y)|^2}{|z-y|^{3+2s}}\,{\rm d}y\Big|^{\frac{3}{2s}}\,{\rm d}z\\
&=\int_{\mathbb{R}^3\backslash B_{2\beta}(0)}\Big|\int_{B_{2\beta}(0)}\frac{|\varphi(z)-\varphi(y)|^2}{|z-y|^{3+2s}}\,{\rm d}y\Big|^{\frac{3}{2s}}\,{\rm d}z+\int_{B_{2\beta}(0)}\Big|\int_{\mathbb{R}^3}\frac{|\varphi(z)-\varphi(y)|^2}{|z-y|^{3+2s}}\,{\rm d}y\Big|^{\frac{3}{2s}}\,{\rm d}z\\
&\leq C\Big[\frac{1}{\beta^{\frac{3}{s}}}\int_{B_{3\beta}(0)}\Big|\int_{|z-y|\leq\beta}\frac{1}{|z-y|^{1+2s}}\,{\rm d}y\Big|^{\frac{3}{2s}}\,{\rm d}z+\int_{\mathbb{R}^3\backslash B_{2\beta}(0)}\Big|\int_{|z-y|>\beta,y\in B_{2\beta}(0)}\frac{|\varphi(z)-\varphi(y)|^2}{|z-y|^{3+2s}}\,{\rm d}y\Big|^{\frac{3}{2s}}\,{\rm d}z\\
&+\int_{B_{2\beta}(0)}\Big|\frac{1}{\beta^2}\int_{|z-y|\leq\beta}\frac{1}{|z-y|^{1+2s}}\,{\rm d}y+\int_{|z-y|>1}\frac{1}{|z-y|^{3+2s}}\,{\rm d}y\Big|^{\frac{3}{2s}}\,{\rm d}z\Big]\\
&\leq C\Big(1+\int_{\mathbb{R}^3\backslash B_{2\beta}(0)}\Big|\int_{|z-y|>\beta,y\in B_{2\beta}(0)}\frac{|\varphi(z)-\varphi(y)|^2}{|z-y|^{3+2s}}\,{\rm d}y\Big|^{\frac{3}{2s}}\,{\rm d}z\Big)\\
&=C\Big(1+\int_{\mathbb{R}^3\backslash B_{2\beta}(0)}\Big|\int_{|z-y|>\frac{|z|}{2},y\in B_{2\beta}(0)}\frac{1}{|z-y|^{3+2s}}\,{\rm d}y\Big|^{\frac{3}{2s}}\,{\rm d}z\\
&+\frac{1}{\beta^{\frac{3}{s}}}\int_{\mathbb{R}^3\backslash B_{2\beta}(0)}\Big|\int_{\beta<|z-y|\leq\frac{|z|}{2},y\in B_{2\beta}(0)}\frac{1}{|z-y|^{1+2s}}\,{\rm d}y\Big|^{\frac{3}{2s}}\,{\rm d}z\Big)\\
&=C\Big(1+\int_{\mathbb{R}^3\backslash B_{2\beta}(0)}\Big|\int_{|z-y|>\frac{|z|}{2},y\in B_{2\beta}(0)}\frac{1}{|z-y|^{3+2s}}\,{\rm d}y\Big|^{\frac{3}{2s}}\,{\rm d}z\\
&+\frac{1}{\beta^{\frac{3}{s}}}\int_{B_{4\beta}(0)}\Big|\int_{\beta<|z-y|\leq\frac{|z|}{2}}\frac{1}{|z-y|^{1+2s}}\,{\rm d}y\Big|^{\frac{3}{2s}}\,{\rm d}z\Big)\\
&\leq C\Big(1+\int_{\mathbb{R}^3\backslash B_{2\beta}(0)}\frac{1}{|z|^{(3+2s)\frac{3}{2s}}}\,{\rm d}z\Big)\leq C.
\end{align*}
Thus, we obtain
\begin{equation}\label{equ4-0-1}
\|W_{\varepsilon}^y\|_{H_{\varepsilon}}^2\leq C\|W\|^2
\end{equation}
for all $y\in\mathcal{M}^{\beta}$, $W\in\mathcal{L}_{V_0}$ and $\varepsilon$. From the boundedness of $\mathcal{L}_{V_0}$, we see that $\mathcal{N}_{\varepsilon}$ is uniformly bounded in $H_{\varepsilon}$.

Now let $\{W_n\}$ be a sequence in $\mathcal{N}_{\varepsilon}$, then there exists $\{U_n\}\subset\mathcal{L}_{V_0}$ and $\{x_n\}\subset\mathcal{M}^{\beta}$ satisfying $W_n(z)=\varphi_{\varepsilon}(z-\frac{x_n}{\varepsilon})U_n(z-\frac{x_n}{\varepsilon})$. The compactness of $\mathcal{L}_{V_0}$ and $\mathcal{M}^{\beta}$ imply that the existence of $U_0\in\mathcal{L}_{V_0}$ and $x_0\in\mathcal{M}^{\beta}$ such that $U_n\rightarrow U$ in $H^s(\mathbb{R}^3)$ and $x_n\rightarrow x_0$ in $\mathbb{R}^3$, up to subsequences.

Define $W_0(z)=\varphi_{\varepsilon}(z-\frac{x_0}{\varepsilon})U_0(z-\frac{x_0}{\varepsilon})$, we have $W_0\in\mathcal{N}_{\varepsilon}$. From \eqref{equ4-0-1}, it is easy to show that $W_n\rightarrow W_0$ in $H_{\varepsilon}$.
\end{proof}
For $W^{\ast}\in \mathcal{L}_{V_0}$ arbitrary but fixed, we define
\begin{equation*}
W_{\varepsilon,\tau}(z):=\varphi(\varepsilon z)W_{\tau}^{\ast}(z)=\tau^{s+t}\varphi(\varepsilon z)W^{\ast}(\tau z),
\end{equation*}
we will show that $\mathcal{J}_{\varepsilon}$ possesses the mountain-pass geometry.

Similar to the proof of Lemma \ref{lem3-1}, we can conclude that $\mathcal{J}_{\varepsilon}(u)>0$ for $\|u\|_{H_{\varepsilon}}$ small and there exists $\tau_0>0$ such that $\mathcal{I}_{V_0}(W_{\tau_0}^{\ast})<-3$, where $W_{\tau_0}^{\ast}(z)=\tau_0^{s+t}W^{\ast}(\tau_0 z)$.
\begin{lemma}\label{lem4-0-2}
\begin{equation*}
\sup_{\tau\in[0,\tau_0]}\Big|\mathcal{J}_{\varepsilon}(W_{\varepsilon,\tau})-\mathcal{I}_{V_0}(W^{\ast}_{\tau}(z))\Big|\rightarrow0\quad \text{as}\,\, \varepsilon\rightarrow0.
\end{equation*}
\end{lemma}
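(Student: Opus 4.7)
The plan is to reduce the assertion to a term-by-term comparison and then combine pointwise convergences with the scaling identities of $W^{\ast}_{\tau}$ to obtain the uniformity in $\tau$. The first step is to notice that the support of $W_{\varepsilon,\tau}$ lies inside the set $\{z\in\mathbb{R}^3:|\varepsilon z|\leq 2\beta\}$, which, because $0\in\mathcal{M}$ and $2\beta<\delta_0=\frac{1}{10}\mathrm{dist}(\mathcal{M},\mathbb{R}^3\setminus\Lambda)$, is contained in $\Lambda_{\varepsilon}$ for every $\varepsilon>0$ small enough. Hence on the support of $W_{\varepsilon,\tau}$ one has $\chi_{\Lambda_{\varepsilon}}(\varepsilon z)=1$, so $F(\varepsilon z,W_{\varepsilon,\tau})=G(W_{\varepsilon,\tau})$ and $Q_{\varepsilon}(W_{\varepsilon,\tau})=0$. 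Therefore $\mathcal{J}_{\varepsilon}(W_{\varepsilon,\tau})=P_{\varepsilon}(W_{\varepsilon,\tau})$, and its difference with $\mathcal{I}_{V_0}(W^{\ast}_{\tau})$ splits into four pieces corresponding to the kinetic, potential, Hartree, and nonlinearity parts.

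For each of these pieces the main inputs are the pointwise convergences $\varphi(\varepsilon z)\to 1$ and $V(\varepsilon z)\to V(0)=V_0$ as $\varepsilon\to 0$, together with the decay $W^{\ast}(x)\leq C(1+|x|)^{-3-2s}$ from Proposition \ref{pro3-4}~(ii) and the growth estimate \eqref{equ3-4}, which provide the integrable dominators needed to apply dominated convergence. The delicate piece is the kinetic term, because $D_s$ does not commute with multiplication by $\varphi_{\varepsilon}$. I would handle it via the standard commutator-type estimate
\begin{equation*}
\|D_s[(\varphi_{\varepsilon}-1)W^{\ast}_{\tau}]\|_2^2\leq 2\int(\varphi_{\varepsilon}-1)^2|D_sW^{\ast}_{\tau}|^2\,dz+2\int(W^{\ast}_{\tau})^2 K_{\varepsilon}\,dz,
\end{equation*}
where $K_{\varepsilon}(z)=\int|\varphi_{\varepsilon}(z)-\varphi_{\varepsilon}(y)|^2|z-y|^{-(3+2s)}\,dy=\varepsilon^{2s}K_1(\varepsilon z)$ with $K_1\in L^{\infty}$; the first summand tends to zero by dominated convergence, while the second is dominated by $C\varepsilon^{2s}\|W^{\ast}_{\tau}\|_2^2$. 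The Hartree term is treated analogously, invoking Lemma \ref{lem2-1}~(iii).

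To upgrade pointwise-in-$\tau$ convergence to uniform convergence on $[0,\tau_0]$, I would split the parameter interval. The hypothesis $2s+2t>3$ of Theorem \ref{thm1-1} makes both scaling exponents $2s+2t-3$ and $4s+2t-3$ strictly positive, so the identities $\|W^{\ast}_{\tau}\|_2^2=\tau^{2s+2t-3}\|W^{\ast}\|_2^2$ and $\|D_sW^{\ast}_{\tau}\|_2^2=\tau^{4s+2t-3}\|D_sW^{\ast}\|_2^2$, combined with the uniform bound $\|W_{\varepsilon,\tau}\|_{H_{\varepsilon}}\leq C\|W^{\ast}_{\tau}\|$ (an adaptation of Lemma \ref{lem4-0-1}), allow one to choose $\tau_{\eta}>0$ so that both $|\mathcal{I}_{V_0}(W^{\ast}_{\tau})|$ and $|\mathcal{J}_{\varepsilon}(W_{\varepsilon,\tau})|$ are smaller than $\eta/2$ for every $\tau\in[0,\tau_{\eta}]$ and every $\varepsilon$ small. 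On the complementary interval $[\tau_{\eta},\tau_0]$ the ratio $\varepsilon/\tau$ tends to $0$ uniformly in $\tau$, hence the dominated-convergence arguments of the previous paragraph yield convergence uniform in $\tau$; combining the two regimes gives the claim.

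The main obstacle I anticipate is keeping the dependence on $\tau$ transparent in the fractional commutator estimate for the kinetic term; after the rescaling $\zeta=\tau z$ one is led to an integral $\tau^{4s+2t-3}\int(\varphi(\varepsilon\zeta/\tau)-1)^2|D_sW^{\ast}|^2\,d\zeta$ in which the factor $\tau^{4s+2t-3}$ supplies the smallness near $\tau=0$ and the condition $\varepsilon/\tau\to 0$ supplies the smallness away from $\tau=0$, both relying crucially on the assumption $2s+2t>3$.
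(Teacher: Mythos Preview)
Your proposal is correct and shares the same overall skeleton as the paper: you both observe that $\mathrm{supp}\,W_{\varepsilon,\tau}\subset\Lambda_\varepsilon$ so that $Q_\varepsilon$ vanishes and $F(\varepsilon z,\cdot)=G(\cdot)$, and you both split $\mathcal{J}_\varepsilon(W_{\varepsilon,\tau})-\mathcal{I}_{V_0}(W^\ast_\tau)$ into the four pieces (kinetic, potential, Hartree, nonlinearity) and treat each by dominated convergence.

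The genuine difference lies in how uniformity in $\tau\in[0,\tau_0]$ is obtained. The paper does not split the interval: instead it exploits the decay $W^\ast(x)\leq C(1+|x|)^{-3-2s}$ to build, for each $z$, an explicit $\tau$-independent pointwise dominator by maximizing the auxiliary function $h(\tau)=\tau^{s+t}/(1+\tau^{3+2s}|z|^{3+2s})$ over $[0,\tau_0]$; this yields $\sup_\tau\tau^{s+t}W^\ast(\tau z)\leq C\max\{h(\tau_{\max}),h(\tau_0)\}$, and the resulting majorants (which behave like $|z|^{-(s+t)}$ near infinity) are fed directly into a single application of dominated convergence. Your route --- using the positive scaling exponents $2s+2t-3$ and $4s+2t-3$ to make both functionals small on $[0,\tau_\eta]$, and then using $\varepsilon/\tau\leq\varepsilon/\tau_\eta\to0$ to run dominated convergence uniformly on $[\tau_\eta,\tau_0]$ after the change of variables $\zeta=\tau z$ --- is a standard ``small parameter / bounded-below parameter'' dichotomy and works just as well. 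Your commutator treatment of the kinetic term, with $K_\varepsilon(z)=\varepsilon^{2s}K_1(\varepsilon z)$ and $K_1\in L^\infty$, is cleaner than the paper's somewhat heavier direct expansion of $|D_s(\varphi_\varepsilon W^\ast_\tau)|^2$; on the other hand, the paper's single-dominator device avoids introducing the extra threshold $\tau_\eta$ and makes the argument uniform in one stroke. Both routes rely on $2s+2t>3$ (and hence $4s+2t>3$), so neither is more general in that respect.
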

\begin{proof}
Since ${\rm supp}(W_{\varepsilon,\tau})\subset\Lambda_{\varepsilon}$, we have $Q_{\varepsilon}(W_{\varepsilon,\tau})\equiv0$ and so $\mathcal{J}_{\varepsilon}(W_{\varepsilon,\tau})=P_{\varepsilon}(W_{\varepsilon,\tau})$. Then for any $\tau\in[0,\tau_0]$, we get
\begin{align*}
&\Big|P_{\varepsilon}(W_{\varepsilon,\tau})-\mathcal{I}_{V_0}(W_{\tau}^{\ast}(z))\Big|\leq\frac{1}{2}\Big|\int_{\mathbb{R}^3}(|D_sW_{\varepsilon,\tau}|^2-|D_sW_{\tau}^{\ast}|^2)\,{\rm d}z\Big|+\frac{1}{2}\Big|\int_{\mathbb{R}^3}(V(\varepsilon z)W_{\varepsilon,\tau}^2-V_0(W_{\tau}^{\ast})^2)\,{\rm d}z\Big|\\
&+\frac{1}{4}\int_{\mathbb{R}^3}(\phi_{W_{\varepsilon,\tau}}^tW_{\varepsilon,\tau}^2-\phi_{W_{\tau}^{\ast}}^t(W_{\tau}^{\ast})^2)\,{\rm d }z+\Big|\int_{\mathbb{R}^3}(G(W_{\tau}^{\ast})-F(\varepsilon z,W_{\varepsilon,\tau}))\,{\rm d}z\Big|\\
&:=\frac{1}{2}I_1+\frac{1}{2}I_2+\frac{1}{4}I_3+I_4.
\end{align*}
In order to estimate $I_{i} (i=1,2,3,4)$, we set $h(\tau)=\frac{\tau^{s+t}}{1+\tau^{3+2s}|z|^{3+2s}}$ for $\tau\in[0,+\infty)$ and $|z|>0$. Directly computations, we see that $h(\tau)$ attains its maximum at $\tau_{max}=\Big(\frac{s+t}{(3+t-s)|z|^{3+2s}}\Big)^{\frac{1}{3+2s}}$ and
\begin{equation*}
\sup_{\tau\in[0,+\infty)}h(\tau)=h(\tau_{max})=\frac{(3+t-s)}{3+2s}\Big(\frac{s+t}{3+t-s}\Big)^{\frac{s+t}{3+2s}}\frac{1}{|z|^{s+t}}.
\end{equation*}
Observe that $|z|\geq\Big(\frac{s+t}{3+t-s}\Big)^{\frac{1}{3+2s}}\frac{1}{\tau_0}$, i.e., $\tau_{max}\leq\tau_0$, we have that
\begin{equation*}
\sup_{\tau\in[0,\tau_0]}h(\tau)=h(\tau_{max}).
\end{equation*}
If $|z|<\Big(\frac{s+t}{3+t-s}\Big)^{\frac{1}{3+2s}}\frac{1}{\tau_0}$, i.e., $\tau_{max}>\tau_0$, we have that
\begin{equation*}
\sup_{\tau\in[0,\tau_0]}h(\tau)=h(\tau_0).
\end{equation*}
Now, by $(ii)$ of Proposition \ref{lem3-4}, Fubini's Theorem and $W\in C^{1,\alpha}(\mathbb{R}^3)$, we have that
\begin{align*}
A_1&=\tau^{2(s+t)}\Big|\int_{\mathbb{R}^3}\int_{\mathbb{R}^3}\frac{1}{|z-y|^{3+2s}}\Big((\varphi_{\varepsilon}^2(z)-1)|W(\tau z)-W(\tau y)|^2+|\varphi_{\varepsilon}(z)-\varphi_{\varepsilon}(y)|^2W^2(\tau y)\\
&+2\varphi_{\varepsilon}(z)(\varphi_{\varepsilon}(z)-\varphi_{\varepsilon}(y))(W(\tau z)-W(\tau y))W(\tau y)\Big)\,{\rm d}y\,{\rm d}z\Big|\\
&\leq\tau^{2(s+t)}\Big(\int_{\mathbb{R}^3}\int_{\mathbb{R}^3}|\varphi_{\varepsilon}^2(z)-1|\frac{|W(\tau z)-W(\tau y)|^2}{|z-y|^{3+2s}}\,{\rm d}y\,{\rm d}z+2(1+\int_{\mathbb{R}^3}\varphi_{\varepsilon}^2|D_sW(\tau z)|^2\,{\rm d}z)\\
&\int_{\mathbb{R}^3}\int_{\mathbb{R}^3}W^2(\tau z)\frac{|\varphi_{\varepsilon}(z)-\varphi_{\varepsilon}(y)|^2}{|z-y|^{3+2s}}\,{\rm d}y\,{\rm d}z\Big)\\
&\leq\int_{\mathbb{R}^3}\int_{\mathbb{R}^3}|\varphi_{\varepsilon}^2(z)-1|(\chi_{\{|z-y|<1\}}\frac{\tau_0^{2(s+t+1)}}{|z-y|^{1+2s}}+\chi_{\{|z-y|>1\}}\frac{\tau_0^{2(s+t)}}{|z-y|^{3+2s}})\,{\rm d}y\,{\rm d}z\\
&+C(\tau_0^2+1)\int_{\mathbb{R}^3}\int_{\mathbb{R}^3}\max\{h^2(\tau_{max}),h^2(\tau_0)\}\frac{|\varphi_{\varepsilon}(z)-\varphi_{\varepsilon}(y)|^2}{|z-y|^{3+2s}}\,{\rm d}y\,{\rm d}z.
\end{align*}
Thus, the Lebesgue Dominated Convergence Theorem implies that $\sup\limits_{\tau\in[0,\tau_0]}A_1\rightarrow0$ as $\varepsilon\rightarrow0$.

For $A_2$. Since
\begin{align*}
A_2&=\tau^{2(s+t)}\Big|\int_{\mathbb{R}^3}(V(\varepsilon z)-V_0)\varphi_{\varepsilon}^2(z)W^2(\tau z)\,{\rm d}z+V_0\int_{\mathbb{R}^3}(\varphi_{\varepsilon}^2(z)-1)W^2(\tau z)\,{\rm d }z\Big|\\
&\leq\int_{\mathbb{R}^3}(V(\varepsilon z)-V_0)\varphi_{\varepsilon}^2(z)\max\{h^2(\tau_{max}),h^2(\tau_0)\}\,{\rm d}z\\
&+V_0\int_{\mathbb{R}^3}|\varphi_{\varepsilon}^2(z)-1|\max\{h^2(\tau_{max}),h^2(\tau_0)\}\,{\rm d}z,
\end{align*}
by the Lebesgue Dominated Convergence Theorem, we obtain that $\sup\limits_{\tau\in[0,\tau_0]}A_2\rightarrow0$ as $\varepsilon\rightarrow0$.

For $A_3$. Similarly arguments as above proof of $A_2$, we have that
\begin{align*}
A_3&\leq\tau^{4(s+t)}\int_{\mathbb{R}^3}\int_{\mathbb{R}^3}\frac{|\varphi_{\varepsilon}^2(z)\varphi_{\varepsilon}^2(y)-1|W^2(\tau y)W^2(\tau z)}{|z-y|^{3-2t}}\,{\rm d}y\,{\rm d}z\\
&\leq\int_{\mathbb{R}^3}\int_{\mathbb{R}^3}|\varphi_{\varepsilon}^2(z)\varphi_{\varepsilon}^2(y)-1|\frac{\max\{h^2(\tau_{max}),h^2(\tau_0)\}_y\max\{h^2(\tau_{max}),h^2(\tau_0)\}_z}{|z-y|^{3-2t}}\,{\rm d}y\,{\rm d}z.
\end{align*}
Using the Lebesgue Dominated Convergence Theorem, we get that $\sup\limits_{\tau\in[0,\tau_0]}A_3\rightarrow0$ as $\varepsilon\rightarrow0$.

For $A_4$. From $W\in L^{\infty}(\mathbb{R}^3)$ and \eqref{equ3-4}, we deduce that
\begin{align*}
A_4&\leq\int_{\mathbb{R}^3}\Big|G(\tau^{s+t}\varphi_{\varepsilon}W(\tau z))-G(\tau^{s+t}W(\tau z))\Big|\,{\rm d}z\leq C\tau^{2(s+t)}\int_{\mathbb{R}^3}(W^2(\tau z)+W^{2_s^{\ast}}(\tau z))|\varphi_{\varepsilon}(z)-1|\,{\rm d}z\\
&\leq C\int_{\mathbb{R}^3}\tau^{2(s+t)}W^2(\tau z)|\varphi_{\varepsilon}(z)-1|\,{\rm d}z\leq C\int_{\mathbb{R}^3}\max\{h^2(\tau_{max}),h^2(\tau_0)\}|\varphi_{\varepsilon}(z)-1|\,{\rm d}z.
\end{align*}
Thus, $\sup\limits_{\tau\in[0,\tau_0]}A_4\rightarrow0$ as $\varepsilon\rightarrow0$. Therefore, $\mathcal{J}_{\varepsilon}(W_{\varepsilon,\tau})\rightarrow \mathcal{I}_{V_0}(W_{\tau}^{\ast})$ as $\varepsilon\rightarrow0$, uniformly on $\tau\in[0,\tau_0]$.
\end{proof}

Since $0\in\mathcal{M}$ and $\Lambda$ is an open set, there exists $R>0$ such that $B_R(0)\subset \Lambda$, and by Proposition \ref{pro3-4} $(ii)$, we have
\begin{equation*}
\int_{\mathbb{R}^3\backslash\Lambda_{\varepsilon}}W_{\varepsilon,\tau_0}^2\,{\rm d}z\leq\tau_0^{2(s+t)}\int_{\mathbb{R}^3\backslash B_{R/\varepsilon}(0)}(W^{\ast}(\tau_0 z))^2\,{\rm d}z\leq C\frac{\varepsilon^{4s+3}}{R^{4s+3}}\leq C\varepsilon^{4s+3}
\end{equation*}
which implies that $Q_{\varepsilon}(W_{\varepsilon,\tau_0})\equiv0$ for $\varepsilon>0$ small. Thus, by Lemma \ref{lem4-0-2}, we have
\begin{align*}
&\mathcal{J}_{\varepsilon}(W_{\varepsilon,\tau_0})=P_{\varepsilon}(W_{\varepsilon,\tau_0})=\mathcal{I}_{V_0}(W_{\tau_0}^{\ast})+o(1)<-2\quad\text{for}\,\,\varepsilon>0\,\,\text{small}.
\end{align*}
Therefore, we can define the Mountain-Pass level of $\mathcal{J}_{\varepsilon}$ given by
\begin{equation*}
\mathcal{C}_{\varepsilon}:=\inf_{\gamma\in\mathcal{A}_{\varepsilon}}\max_{\tau\geq0}\mathcal{J}_{\varepsilon}(\gamma(\tau)),
\end{equation*}
where $\mathcal{A}_{\varepsilon}=\{\gamma\in C([0,1],H_{\varepsilon})\,\,|\,\, \gamma(0)=0,\,\,\gamma(1)=W_{\varepsilon,\tau_0}\}$. Furthermore, by well-known arguments (see for instance \cite{BJ,HL1} for a proof in a local setting that extends smoothly to our case) it is possible to prove the following Lemma.
\begin{lemma}\label{lem4-0-3}
\begin{equation}\label{equ4-1}
\lim_{\varepsilon\rightarrow0}\mathcal{C}_{\varepsilon}=\lim_{\varepsilon\rightarrow0}\mathcal{D}_{\varepsilon}:=\lim_{\varepsilon\rightarrow0}\max_{\tau\in[0,1]}\mathcal{J}_{\varepsilon}(\gamma_{\varepsilon}(\tau))=c_{V_0}
\end{equation}
where $\gamma_{\varepsilon}(\tau)=W_{\varepsilon,\tau\tau_0}$ for $\tau\in[0,1]$ and $c_{V_0}=\mathcal{I}_{V_0}(W^{\ast})$ for $W^{\ast}\in\mathcal{L}_{V_0}$.
\end{lemma}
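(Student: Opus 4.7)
The plan is to sandwich $c_{V_0}\le\mathcal{C}_{\varepsilon}\le\mathcal{D}_{\varepsilon}$ for all sufficiently small $\varepsilon$, and then combine this with $\limsup_{\varepsilon\to 0}\mathcal{D}_{\varepsilon}\le c_{V_0}$, which forces both limits to equal $c_{V_0}$. The engine of the whole argument is the pointwise comparison
$$\mathcal{J}_{\varepsilon}(u)=P_{\varepsilon}(u)+Q_{\varepsilon}(u)\ge P_{\varepsilon}(u)\ge\mathcal{I}_{V_0}(u),\qquad\forall\,u\in H_{\varepsilon},$$
which follows immediately from $(V_0)$ (giving $V(\varepsilon z)\ge V_0$), from $(f_2)$ (giving $F(\varepsilon z,\tau)\le G(\tau)$), and from $Q_{\varepsilon}\ge 0$. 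I would record this inequality first.

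To bound $\mathcal{D}_{\varepsilon}$ from above, I would evaluate $\mathcal{J}_{\varepsilon}$ along the distinguished path $\gamma_{\varepsilon}$. Since $\mathrm{supp}(W_{\varepsilon,\tau\tau_0})\subset B_{2\beta/\varepsilon}(0)\subset\Lambda_{\varepsilon}$ for every $\tau\in[0,1]$, we have $Q_{\varepsilon}(\gamma_{\varepsilon}(\tau))\equiv 0$, so $\mathcal{J}_{\varepsilon}(\gamma_{\varepsilon}(\tau))=P_{\varepsilon}(W_{\varepsilon,\tau\tau_0})$, and Lemma \ref{lem4-0-2} yields
$$\sup_{\tau\in[0,1]}\bigl|\mathcal{J}_{\varepsilon}(\gamma_{\varepsilon}(\tau))-\mathcal{I}_{V_0}(W^{\ast}_{\tau\tau_0})\bigr|\longrightarrow 0\quad\text{as }\varepsilon\to 0.$$
Because $W^{\ast}\in\mathcal{L}_{V_0}\subset\mathcal{M}_{V_0}$, Proposition \ref{pro3-2}(ii) (whose proof shows strict monotonicity of the relevant scaling ratio) identifies $\tau=1$ as the unique maximizer of $\tau\mapsto\mathcal{I}_{V_0}(W^{\ast}_{\tau})$ on $(0,\infty)$, so combined with Lemma \ref{lem3-2}
$$\max_{\tau\in[0,1]}\mathcal{I}_{V_0}(W^{\ast}_{\tau\tau_0})=\max_{\tau\in[0,\tau_0]}\mathcal{I}_{V_0}(W^{\ast}_{\tau})=\mathcal{I}_{V_0}(W^{\ast})=c_{V_0}.$$
Consequently $\mathcal{D}_{\varepsilon}\to c_{V_0}$, and $\mathcal{C}_{\varepsilon}\le\mathcal{D}_{\varepsilon}$ holds trivially since $\gamma_{\varepsilon}\in\mathcal{A}_{\varepsilon}$.

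For the lower bound on $\mathcal{C}_{\varepsilon}$, I would exploit the preamble fact $\mathcal{J}_{\varepsilon}(W_{\varepsilon,\tau_0})<-2$ for small $\varepsilon$ (a direct consequence of Lemma \ref{lem4-0-2} combined with the choice $\mathcal{I}_{V_0}(W^{\ast}_{\tau_0})<-3$). Together with the pointwise inequality this gives $\mathcal{I}_{V_0}(W_{\varepsilon,\tau_0})\le\mathcal{J}_{\varepsilon}(W_{\varepsilon,\tau_0})<0$, so every $\gamma\in\mathcal{A}_{\varepsilon}$, which joins $0$ to $W_{\varepsilon,\tau_0}$, automatically lies in $\Gamma_{V_0}$. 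Hence
$$\max_{\tau\in[0,1]}\mathcal{J}_{\varepsilon}(\gamma(\tau))\ge\max_{\tau\in[0,1]}\mathcal{I}_{V_0}(\gamma(\tau))\ge c_{V_0},$$
and taking the infimum over $\gamma$ yields $\mathcal{C}_{\varepsilon}\ge c_{V_0}$.

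I do not expect a serious obstacle: the whole proof is a soft reduction of $\mathcal{C}_{\varepsilon}$ to the mountain-pass level $c_{V_0}$ of $\mathcal{I}_{V_0}$ via the penalization inequality $\mathcal{J}_{\varepsilon}\ge\mathcal{I}_{V_0}$ and the explicit ground-state path $\gamma_{\varepsilon}$. The only ingredient requiring care is the uniqueness of the maximizer of $\tau\mapsto\mathcal{I}_{V_0}(W^{\ast}_{\tau})$, which is exactly the content of Proposition \ref{pro3-2}(ii) and is therefore already in hand.
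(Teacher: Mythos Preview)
Your proof is correct and rests on the same two pillars as the paper's: the uniform convergence of Lemma~\ref{lem4-0-2} for the upper bound, and the pointwise comparison $\mathcal{J}_{\varepsilon}\ge P_{\varepsilon}\ge\mathcal{I}_{V_0}$ for the lower bound. Your lower bound is actually a bit more direct than the paper's: the paper argues by contradiction, picks near-optimal paths $\gamma_n$, and then runs an intermediate-value argument on $P_{\varepsilon_n}(\gamma_n(\cdot))$ to locate a stopping time $\tau_n$ with $P_{\varepsilon_n}(\gamma_n(\tau_n))=-1$, so that the truncated path lies in $\Gamma_{V_0}$; you bypass this by observing that the \emph{fixed} endpoint $W_{\varepsilon,\tau_0}$ already satisfies $\mathcal{I}_{V_0}(W_{\varepsilon,\tau_0})\le\mathcal{J}_{\varepsilon}(W_{\varepsilon,\tau_0})<0$, hence $\mathcal{A}_{\varepsilon}\subset\Gamma_{V_0}$ outright and $\mathcal{C}_{\varepsilon}\ge c_{V_0}$ for every small $\varepsilon$ without any contradiction step. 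The paper's detour would be needed if the admissible class allowed variable endpoints or if the global inequality $\mathcal{J}_{\varepsilon}\ge\mathcal{I}_{V_0}$ failed, but in the present setup your shortcut is entirely legitimate.
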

\begin{proof}
First we will prove that $\limsup\limits_{\varepsilon\rightarrow0}\mathcal{C}_{\varepsilon}\leq c_{V_0}$. Setting $\gamma_{\varepsilon}(\tau)=W_{\varepsilon,\tau\tau_0}$ for $\tau\in[0,1]$, we get $\gamma_{\varepsilon}\in\Gamma_{\varepsilon}$ and from Lemma \ref{lem4-0-2}, we have
\begin{align*}
\limsup_{\varepsilon\rightarrow0}\mathcal{C}_{\varepsilon}&\leq\limsup_{\varepsilon\rightarrow0}\max_{\tau\in[0,1]}\mathcal{J}_{\varepsilon}(\gamma_{\varepsilon}(\tau))
\leq\limsup_{\varepsilon\rightarrow0}\max_{\tau\in[0,\tau_0]}\mathcal{J}_{\varepsilon}(W_{\varepsilon,\tau})\leq\max_{\tau\in[0,\tau_0]}\mathcal{I}_{V_0}(W_{\tau}^{\ast})\\
&\leq\max_{\tau\in[0,+\infty)}\mathcal{I}_{V_0}(W_{\tau}^{\ast})=\mathcal{I}_{V_0}(W)=c_{V_0}.
\end{align*}
which we conclude the first part of the proof. Next we shall prove that $\liminf\limits_{\varepsilon\rightarrow0}\mathcal{C}_{\varepsilon}\geq c_{V_0}$. Assume the contrary that $\liminf\limits_{\varepsilon\rightarrow0}\mathcal{C}_{\varepsilon}< c_{V_0}$. Then there exist $\delta_0>0$, $\varepsilon_n\rightarrow0$ and $\gamma_n:=\gamma_{\varepsilon_n}\in\mathcal{A}_{\varepsilon_n}$ satisfying $\mathcal{J}_{\varepsilon}(\gamma_n(\tau))<c_{V_0}-\delta_0$ for $\tau\in[0,1]$.
Since $P_{\varepsilon_n}(\gamma_n(0))=0$ and $P_{\varepsilon_n}(\gamma_n(1))\leq\mathcal{J}_{\varepsilon_n}(\gamma_n(1))=\mathcal{J}_{\varepsilon_n}(W_{\varepsilon_n,\tau_0})<-2$, we can find $\tau_n\in(0,1)$ such that $P_{\varepsilon_n}(\gamma_n(\tau))\geq-1$ for $\tau\in[0,\tau_n]$ and $P_{\varepsilon_n}(\gamma_n(\tau_n))=-1$.
Since
\begin{align*}
P_{\varepsilon_n}(\gamma_n(\tau))&=\mathcal{I}_{V_0}(\gamma_n(\tau))+\frac{1}{2}\int_{\mathbb{R}^3}(V(\varepsilon_n z)-V_0)\gamma_n^2(\tau)\,{\rm d}z+\int_{\mathbb{R}^3}[G(\gamma_n(\tau))-F(\varepsilon_n z,\gamma_n(\tau)))\,{\rm d}z\\
&\geq\mathcal{I}_{V_0}(\gamma_n(\tau))+\frac{1}{2}\int_{\mathbb{R}^3}(V(\varepsilon_n z)-V_0)\gamma_n^2(\tau)\,{\rm d}z\geq\mathcal{I}_{V_0}(\gamma_n(\tau)),\quad \forall\tau\in[0,\tau_n],
\end{align*}
then
\begin{align*}
\mathcal{I}_{V_0}(\gamma_n(\tau_n))\leq P_{\varepsilon_n}(\gamma_n(\tau_n))=-1<0.
\end{align*}
Recalling that the mountain pass level for $\mathcal{I}_{V_0}$ corresponds to the least energy level, we have $\max\limits_{\tau\in[0,\tau_n]}\mathcal{I}_{V_0}(\gamma_n(\tau))\geq c_{V_0}$. Since $Q_{\varepsilon_n}(\gamma_n(\tau))\geq0$, by the estimates above we obtain
\begin{align*}
c_{V_0}-\delta_0&>\max_{\tau\in[0,1]}\mathcal{J}_{\varepsilon_n}(\gamma_n(\tau))\geq\max_{\tau\in[0,1]}P_{\varepsilon_n}(\gamma_n(\tau))\geq\max_{\tau\in[0,\tau_n]}P_{\varepsilon_n}(\gamma_n(\tau))\\
&\geq\max_{\tau\in[0,\tau_n]}\mathcal{I}_{V_0}(\gamma_n(\tau))\geq c_{V_0}.
\end{align*}
This contradiction completes the proof.
\end{proof}

\begin{lemma}\label{lem4-1}
There exists a small $d_0>0$ such that for any $\{\varepsilon_i\}$, $\{u_{\varepsilon_i}\}$ satisfying $\lim\limits_{i\rightarrow\infty}\varepsilon_i\rightarrow0$, $u_{\varepsilon_i}\in \mathcal{N}_{\varepsilon_i}^{d_0}$ and
\begin{equation*}
\lim_{i\rightarrow\infty}\mathcal{J}_{\varepsilon_i}(u_{\varepsilon_i})\leq c_{V_0}\quad \text{and}\quad \lim_{i\rightarrow\infty}\mathcal{J}_{\varepsilon_i}'(u_{\varepsilon_i})=0,
\end{equation*}
there exist, up to a subsequence, $\{x_i\}\subset\mathbb{R}^3$, $x_0\in\mathcal{M}$, $W\in \mathcal{L}_{V_0}$ such that
\begin{equation*}
\lim_{i\rightarrow\infty}|\varepsilon_ix_i-x_0|=0\quad \text{and}\quad \lim_{i\rightarrow\infty}\|u_{\varepsilon_i}-\varphi_{\varepsilon}(\cdot-x_{i})W(\cdot-x_{i})\|_{H_{\varepsilon_i}}=0.
\end{equation*}
\end{lemma}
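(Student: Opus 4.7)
The plan is to adapt the standard profile-decomposition and translation-compactness argument (à la Byeon--Jeanjean and Byeon--Wang) to the nonlocal penalized functional $\mathcal{J}_{\varepsilon}$, matching energies against the ground-state mountain-pass level $c_{V_0}$ of Section 3. Since $u_{\varepsilon_i}\in\mathcal{N}_{\varepsilon_i}^{d_0}$, by the compactness statements in Lemma \ref{lem4-0-1} and Proposition \ref{pro3-4}$(i)$ together with that of $\mathcal{M}^\beta$, I may pass to a subsequence and find $y_i\in\mathcal{M}^\beta$, $W_i\in\mathcal{L}_{V_0}$, $y_0\in\mathcal{M}^\beta$ and $W^\ast\in\mathcal{L}_{V_0}$ with $y_i\to y_0$, $W_i\to W^\ast$ in $H^s(\mathbb{R}^3)$, and $\|u_{\varepsilon_i}-W_{\varepsilon_i}^{y_i}\|_{H_{\varepsilon_i}}\le d_0+o(1)$. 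Since $\beta<\delta_0=\frac{1}{10}\mathrm{dist}(\mathcal{M},\mathbb{R}^3\setminus\Lambda)$, one has $y_0\in\Lambda$. Introduce the translated sequence $\tilde u_i(z):=u_{\varepsilon_i}(z+y_i/\varepsilon_i)$, which is bounded in $H^s(\mathbb{R}^3)$; up to a further subsequence $\tilde u_i\rightharpoonup\tilde u$ weakly in $H^s(\mathbb{R}^3)$, pointwise a.e., and in $L^p_{\mathrm{loc}}$.

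Next I will prove $\tilde u\ne 0$ and identify its limit equation. Because the cut-off $\varphi_{\varepsilon_i}=\varphi(\varepsilon_i\cdot)$ converges pointwise to $1$ and $W_i\to W^\ast$ in $H^s(\mathbb{R}^3)$, one has $\varphi_{\varepsilon_i}W_i\to W^\ast$ in $H^s(\mathbb{R}^3)$; weak lower semicontinuity of the norm gives $\|\tilde u-W^\ast\|_{H^s(\mathbb{R}^3)}\le d_0$, hence $\|\tilde u\|\ge\inf_{W\in\mathcal{L}_{V_0}}\|W\|-d_0>0$ once $d_0$ is smaller than the (positive) infimum over the compact set $\mathcal{L}_{V_0}$. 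Because $y_0\in\Lambda$ is an interior point, $B_R(y_i/\varepsilon_i)\subset\Lambda_{\varepsilon_i}$ for every fixed $R$ and $i$ large, so on compact sets $f(\varepsilon_i z+y_i,\tilde u_i)=g(\tilde u_i)$ and $Q_{\varepsilon_i}'(u_{\varepsilon_i})$ is supported far from the origin after the shift. Combined with $V(\varepsilon_i z+y_i)\to V(y_0)$ uniformly on compacts, Lemma \ref{lem2-1}$(iv)$ for the nonlocal Poisson term (where $2s+2t>3$ enters), and testing $\mathcal{J}_{\varepsilon_i}'(u_{\varepsilon_i})\to 0$ against translated $C_0^\infty$ functions, I deduce that $\tilde u$ is a positive weak solution of
\begin{equation*}
(-\Delta)^s\tilde u+V(y_0)\tilde u+\phi_{\tilde u}^t\tilde u=g(\tilde u).
\end{equation*}
In particular $\tilde u\in\mathcal{M}_{V(y_0)}$, so $\mathcal{I}_{V(y_0)}(\tilde u)\ge c_{V(y_0)}$ by Lemma \ref{lem3-2}.

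The decisive step is an energy comparison. The map $\mu\mapsto c_\mu$ is strictly increasing: scaling a ground state $W_{\mu_2}$ (with $\mu_2>\mu_1$) to land on $\mathcal{M}_{\mu_1}$, together with $2s+2t>3$, gives $c_{\mu_1}<c_{\mu_2}$. Hence $c_{V(y_0)}\ge c_{V_0}$. On the other hand, applying Fatou's lemma to the nonnegative combination of Lemma \ref{lem3-4} evaluated at $\tilde u_i$ for the limit functional $\mathcal{I}_{V(y_0)}$, controlling the discrepancy between $P_{\varepsilon_i}$ and $\mathcal{I}_{V(y_0)}$ (from $V(\varepsilon_i z+y_i)\to V(y_0)$ and $F(\varepsilon_i z+y_i,\cdot)\to G$ on compacts, together with tightness coming from $Q_{\varepsilon_i}\ge 0$), and using $\mathcal{G}_{V(y_0)}(\tilde u)=0$, I obtain
\begin{equation*}
c_{V_0}\ge\lim_{i\to\infty}\mathcal{J}_{\varepsilon_i}(u_{\varepsilon_i})\ge\mathcal{I}_{V(y_0)}(\tilde u)\ge c_{V(y_0)}\ge c_{V_0}.
\end{equation*}
Equality throughout forces $V(y_0)=V_0$, so $y_0\in\mathcal{M}$, and $\mathcal{I}_{V_0}(\tilde u)=c_{V_0}$, so $\tilde u$ is a positive ground state of \eqref{equ3-1}. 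A Brezis--Lieb argument (as in Proposition \ref{pro3-3}) upgrades weak to strong convergence $\tilde u_i\to\tilde u$ in $H^s(\mathbb{R}^3)$. Finally, by Lemma \ref{lem2-2} and Proposition \ref{pro3-4}$(ii)$ the maxima $z_i^\ast$ of $\tilde u_i$ lie in a bounded set and converge (up to subsequence) to some $z^\ast$; setting $W(\cdot):=\tilde u(\cdot+z^\ast)\in\mathcal{L}_{V_0}$ and $x_i:=y_i/\varepsilon_i+z_i^\ast$ yields $|\varepsilon_i x_i-y_0|\to 0$ and $\|u_{\varepsilon_i}-\varphi_{\varepsilon_i}(\cdot-x_i)W(\cdot-x_i)\|_{H_{\varepsilon_i}}\to 0$, by equivalence of the shifted and unshifted $H_{\varepsilon_i}$ norms and continuity of translation in $H^s$.

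The main obstacle is controlling the Fatou-type energy inequality in the penalized, nonlocal setting: every piece must cooperate, namely the fractional Poisson term via Lemma \ref{lem2-1}$(iv)$ (hence the hypothesis $2s+2t>3$), the truncated nonlinearity $\tilde g$ outside $\Lambda_{\varepsilon_i}$, and the penalty $Q_{\varepsilon_i}$ (whose nonnegativity and tightening effect prevent mass loss at infinity under translation). The geometric hypothesis $V_0<\min_{\partial\Lambda}V$ together with $\beta<\delta_0$ is essential to guarantee that the extracted $y_0$ lies strictly inside $\Lambda$, so that the approximating profile encounters no penalization in the limit.
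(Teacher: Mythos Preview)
Your outline follows the right profile-extraction philosophy, but the ``decisive step'' --- the Fatou-type inequality
\[
c_{V_0}\ \ge\ \lim_{i}\mathcal{J}_{\varepsilon_i}(u_{\varepsilon_i})\ \ge\ \mathcal{I}_{V(y_0)}(\tilde u)
\]
--- is not justified, and this is a genuine gap. The nonnegative combination you invoke (from Lemma~\ref{lem3-4} or Proposition~\ref{pro3-3}) is $\mathcal{I}_\mu-\tfrac{1}{4s+2t-3}\mathcal{G}_\mu$, which equals a nonnegative integrand only \emph{after} subtracting the Nehari--Pohozaev quantity $\mathcal{G}_\mu$. For the limit $\tilde u$ you have $\mathcal{G}_{V(y_0)}(\tilde u)=0$, but for the approximating $\tilde u_i$ you have no control on $\mathcal{G}_{V(y_0)}(\tilde u_i)$: the hypothesis gives only $\mathcal{J}'_{\varepsilon_i}\to 0$, not a Pohozaev-type asymptotic. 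So you cannot pass Fatou through to recover $\mathcal{I}_{V(y_0)}(\tilde u)$ on the left side of the chain. Likewise, ``tightness coming from $Q_{\varepsilon_i}\ge 0$'' does not do what you need: the penalty only controls $\int_{\mathbb{R}^3\setminus\Lambda_{\varepsilon_i}}u_{\varepsilon_i}^2$, while mass can still drift \emph{inside} $\Lambda_{\varepsilon_i}$ but far from $y_i/\varepsilon_i$ (the set $\Lambda_{\varepsilon_i}-y_i/\varepsilon_i$ has diameter $\sim 1/\varepsilon_i$). With the Fatou step broken, the Brezis--Lieb upgrade to strong convergence also fails, since you never obtain $\|\tilde u_i\|_2\to\|\tilde u\|_2$.

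The paper closes exactly this gap by a two-stage localisation you have skipped. First (Step~1) it proves an \emph{annulus vanishing} statement: no mass of $u_\varepsilon$ can concentrate in $A_\varepsilon=B_{3\beta/\varepsilon}(x_\varepsilon/\varepsilon)\setminus B_{\beta/2\varepsilon}(x_\varepsilon/\varepsilon)$, by extracting a second nontrivial profile there and reaching a contradiction via a \emph{local} Fatou estimate against the smallness $\|u_\varepsilon-\varphi_\varepsilon W_0(\cdot-x_\varepsilon/\varepsilon)\|\le 2d_0$. Second (Step~2) it splits $u_\varepsilon=u_{\varepsilon,1}+u_{\varepsilon,2}$ with the cut-off $\varphi(\varepsilon\cdot-x_\varepsilon)$, uses the annulus vanishing to kill the cross terms, and shows $\|u_{\varepsilon,2}\|_{H_\varepsilon}\to 0$ by testing $\langle\mathcal{J}'_\varepsilon(u_\varepsilon),u_{\varepsilon,2}\rangle\to 0$ (here $\langle Q_\varepsilon',u_{\varepsilon,2}\rangle\ge 0$ is used with the correct sign). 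Only \emph{after} this does one get the clean bound $P_\varepsilon(u_{\varepsilon,1})\le c_{V_0}+o(1)$, from which $V(x_0)=V_0$ and strong convergence follow. You should replace your global Fatou step by this annulus-vanishing plus cut-off decomposition; the smallness of $d_0$ is used \emph{quantitatively} to rule out a second bubble, not merely to guarantee $\tilde u\neq 0$.
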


\begin{proof}
In the proof we will drop the index $i$ and write $\varepsilon$ instead of $\varepsilon_i$ for simplicity, and we still use $\varepsilon$ after taking a subsequence. By the definition of $\mathcal{N}_{\varepsilon}^{d_0}$, there exist $\{W_{\varepsilon}\}\subset\mathcal{L}_{V_0}$ and $\{x_{\varepsilon}\}\subset\mathcal{M}^{\beta}$ such that for $\varepsilon$ small,
\begin{equation*}
\|u_{\varepsilon}-\varphi_{\varepsilon}(\cdot-\frac{x_{\varepsilon}}{\varepsilon})W_{\varepsilon}(\cdot-\frac{x_{\varepsilon}}{\varepsilon})\|_{H_{\varepsilon}}\leq\frac{3}{2}d_0.
\end{equation*}
Since $\mathcal{L}_{V_0}$ and $\mathcal{M}^{\beta}$ are compact, there exist $W_0\in\mathcal{L}_{V_0}$, $x_0\in\mathcal{M}^{\beta}$ such that $W_{\varepsilon}\rightarrow W_0$ in $H^s(\mathbb{R}^3)$ and $x_{\varepsilon}\rightarrow x_0$ as $\varepsilon\rightarrow0$. Thus, for $\varepsilon>0$ small,
\begin{equation}\label{equ4-2}
\|u_{\varepsilon}-\varphi_{\varepsilon}(\cdot-\frac{x_{\varepsilon}}{\varepsilon})W_0(\cdot-\frac{x_{\varepsilon}}{\varepsilon})\|_{H_{\varepsilon}}\leq2d_0.
\end{equation}

{\bf Step 1.} We claim that
\begin{equation}\label{equ4-3}
\lim_{\varepsilon\rightarrow0}\sup_{y\in A_{\varepsilon}}\int_{B_1(y)}|u_{\varepsilon}|^2\,{\rm d}z=0,
\end{equation}
where $A_{\varepsilon}=B_{3\beta/\varepsilon}(x_{\varepsilon}/\varepsilon)\backslash B_{\beta/2\varepsilon}(x_{\varepsilon}/\varepsilon)$. Suppose by contradiction that
\begin{equation*}
\liminf_{\varepsilon\rightarrow0}\sup_{y\in A_{\varepsilon}}\int_{B_1(y)}|u_{\varepsilon}|^2\,{\rm d}z>0.
\end{equation*}
Thus, there exists $y_{\varepsilon}\in A_{\varepsilon}$ such that $\int_{B_1(y_{\varepsilon})}|u_{\varepsilon}|^2\,{\rm d}z>0$ for $\varepsilon>0$ small. Since $y_{\varepsilon}\in A_{\varepsilon}$, there exists $y^{\ast}\in\mathcal{M}^{4\beta}\subset\Lambda$ such that $\varepsilon y_{\varepsilon}\rightarrow y^{\ast}$ as $\varepsilon\rightarrow0$. Set $v_{\varepsilon}(z)=u_{\varepsilon}(z+y_{\varepsilon})$, then for $\varepsilon>0$ small,
\begin{equation}\label{equ4-4}
\int_{B_1(0)}|v_{\varepsilon}|^2\,{\rm d}z>0.
\end{equation}
Thus, up to a subsequence, we may assume that there exists $v\in H^s(\mathbb{R}^3)$ such that $v_{\varepsilon}\rightharpoonup v$ in $H^s(\mathbb{R}^3)$, $v_{\varepsilon}\rightarrow v$ in $L_{loc}^p(\mathbb{R}^3)$ for $1\leq p<2_s^{\ast}$ and $v_{\varepsilon}\rightarrow v$ a.e. in $\mathbb{R}^3$. By \eqref{equ4-4}, we see that $v\neq0$ and $v$ satisfies
\begin{equation}\label{equ4-4-0}
(-\Delta)^s v+V(y^{\ast}) v+\phi_{v}^t v=g(v)   \quad z\in \mathbb{R}^3.
\end{equation}
Indeed, by the definition of weakly convergence, we have
\begin{align*}
\frac{C_s}{2}\int_{\mathbb{R}^3}\int_{\mathbb{R}^3}\frac{(v_{\varepsilon}(z)-v_{\varepsilon}(y))(\varphi(z)-\varphi(y)}{|z-y|^{3+2s}}\,{\rm d }y\,{\rm d }z+\int_{\mathbb{R}^3}V(y^{\ast})v_{\varepsilon}\varphi\,{\rm d}z\rightarrow\\
\frac{C_s}{2}\int_{\mathbb{R}^3}\int_{\mathbb{R}^3}\frac{(v(z)-v(y))(\varphi(z)-\varphi(y)}{|z-y|^{3+2s}}\,{\rm d }y\,{\rm d }z+\int_{\mathbb{R}^3}V(y^{\ast})v\varphi\,{\rm d}z
\end{align*}
for any $\varphi\in C_0^{\infty}(\mathbb{R}^3)$. Now given $\varphi\in C_0^{\infty}(\mathbb{R}^3)$, we have $\|\varphi(\cdot-y_{\varepsilon})\|_{H_{\varepsilon}}\leq C$ and so $\langle\mathcal{J}_{\varepsilon}'(u_{\varepsilon}),\varphi(\cdot-y_{\varepsilon})\rangle\rightarrow0$ as $\varepsilon\rightarrow0$. Using the fact that $v_{\varepsilon}\rightarrow v$ in $L_{loc}^p(\mathbb{R}^3)$ for $1\leq p<2_s^{\ast}$, the Lebesgue dominated convergence Theorem, the boundedness of ${\rm supp}(\varphi)$ and $(g_0)$--$(g_1)$, it follows that
\begin{align*}
\int_{\mathbb{R}^3}(V(\varepsilon z+\varepsilon y_{\varepsilon})-V(y^{\ast}))v_{\varepsilon}\varphi\,{\rm d }z\rightarrow0,\quad \int_{\mathbb{R}^3}(\phi_{v_{\varepsilon}}^tv_{\varepsilon}-\phi_v^tv)\varphi\,{\rm d }z\rightarrow0,
\end{align*}
\begin{align*}
\int_{\mathbb{R}^3\backslash\Lambda_{\varepsilon}}u_{\varepsilon}(z)\varphi(z-y_{\varepsilon})\,{\rm d}z=\int_{\mathbb{R}^3\backslash\Lambda_{\varepsilon}+y_{\varepsilon}}v_{\varepsilon}(z)\varphi(z)\,{\rm d}z\rightarrow0
\end{align*}
and
\begin{align*}
\int_{\mathbb{R}^3}(f(\varepsilon z+\varepsilon y_{\varepsilon},v_{\varepsilon})-g(v))\varphi\,{\rm d }z\rightarrow0
\end{align*}
for any $\varphi\in C_0^{\infty}(\mathbb{R}^3)$. Therefore, we get that
\begin{align*}
\frac{C_s}{2}\int_{\mathbb{R}^3}\int_{\mathbb{R}^3}\frac{(v(z)-v(y))(\varphi(z)-\varphi(y))}{|z-y|^{3+2s}}\,{\rm d}y\,{\rm d}z&+\int_{\mathbb{R}^3}V(y^{\ast})v\varphi\,{\rm d}z+\int_{\mathbb{R}^3}\phi_v^tv\varphi\,{\rm d}z-\int_{\mathbb{R}^3}g(v)\varphi\,{\rm d}z=0
\end{align*}
for any $\varphi\in C_0^{\infty}(\mathbb{R}^3)$. Since $\varphi$ is arbitrary and $C_0^{\infty}(\mathbb{R}^3)$ is dense in $H_{\varepsilon}$, it follows that $v$ satisfies \eqref{equ4-4-0}.

Thus, we have
\begin{align*}
&c_{V(y^{\ast})}\leq \mathcal{I}_{V(y^{\ast})}(v)=\mathcal{I}_{V(y^{\ast})}(v)-\frac{1}{4s+2t-3}\mathcal{G}_{V(y^{\ast})}(v)\\
&=s\int_{\mathbb{R}^3}V(y^{\ast})|v|^2\,{\rm d}z+\frac{s+t}{4s+2t-3}\int_{\mathbb{R}^3}[g(v)v-\frac{4s+2t}{s+t}G(v)]\,{\rm d}z\\
&\leq s\|V\|_{L^{\infty}(\overline{\Lambda})}\int_{\mathbb{R}^3}|v|^2\,{\rm d}z+\frac{s+t}{4s+2t-3}\int_{\mathbb{R}^3}[g(v)v-\frac{4s+2t}{s+t}G(v)]\,{\rm d}z.
\end{align*}
Hence, for sufficiently large $r>0$, by Fatou's Lemma, we have that
\begin{align*}
&\liminf_{\varepsilon\rightarrow0}\Big[s\|V\|_{L^{\infty}(\overline{\Lambda})}\int_{B_R(y_{\varepsilon})}|u_{\varepsilon}|^2\,{\rm d}z+\frac{s+t}{4s+2t-3}\int_{B_r(y_{\varepsilon})}[g(u_{\varepsilon})u_{\varepsilon}-\frac{4s+2t}{s+t}G(u_{\varepsilon})]\,{\rm d}z\\
&=\liminf_{\varepsilon\rightarrow0}\Big[s\|V\|_{L^{\infty}(\overline{\Lambda})}\int_{B_r(0)}|v_{\varepsilon}|^2\,{\rm d}z+\frac{s+t}{4s+2t-3}\int_{B_r(0)}[g(v_{\varepsilon})v_{\varepsilon}-\frac{4s+2t}{s+t}G(v_{\varepsilon})]\,{\rm d}z\\
&\geq\Big[s\|V\|_{L^{\infty}(\overline{\Lambda})}\int_{B_r(0)}|v|^2\,{\rm d}z+\frac{s+t}{4s+2t-3}\int_{B_r(0)}[g(v)v-\frac{4s+2t}{s+t}G(v)]\,{\rm d}z\\
&\geq\frac{1}{2}\Big[s\|V\|_{L^{\infty}(\overline{\Lambda})}\int_{\mathbb{R}^3}|v|^2\,{\rm d}z+\frac{s+t}{4s+2t-3}\int_{\mathbb{R}^3}[g(v)v-\frac{4s+2t}{s+t}G(v)]\,{\rm d}z\Big]\\
&\geq\frac{1}{2}c_{V(x^{\ast})}>0.
\end{align*}
On the other hand, by the Sobolev embedding theorem, \eqref{equ3-4} and \eqref{equ4-2}, one has
\begin{align*}
&s\|V\|_{L^{\infty}(\overline{\Lambda})}\int_{B_r(y_{\varepsilon})}|u_{\varepsilon}|^2\,{\rm d}z+\frac{s+t}{4s+2t-3}\int_{B_r(y_{\varepsilon})}[g(u_{\varepsilon})u_{\varepsilon}-\frac{4s+2t}{s+t}G(u_{\varepsilon})]\,{\rm d}z\\
&\leq Cd_0+C\int_{B_r(y_{\varepsilon})}\Big|\varphi(\varepsilon z-x_{\varepsilon})W_0(z-\frac{x_{\varepsilon}}{\varepsilon})\Big|^2\,{\rm d}z\leq Cd_0+C\int_{B_r(y_{\varepsilon}-\frac{x_{\varepsilon}}{\varepsilon})}|W_0|^2\,{\rm d}z
\end{align*}
Observing that $y_{\varepsilon}\in A_{\varepsilon}$, implies that $|y_{\varepsilon}-\frac{x_{\varepsilon}}{\varepsilon}|\geq\frac{\beta}{2\varepsilon}$, then for $\varepsilon>0$ small enough, there hold
\begin{equation*}
\int_{B_r(y_{\varepsilon}-\frac{x_{\varepsilon}}{\varepsilon})}|W_0|^2\,{\rm d}z=o(1),
\end{equation*}
where $o(1)\rightarrow0$ as $\varepsilon\rightarrow0$. Thus, we have proved that
\begin{align*}
\frac{1}{2}c_{V(y^{\ast})}&\leq s\|V\|_{L^{\infty}(\overline{\Lambda})}\int_{B_r(y_{\varepsilon})}|u_{\varepsilon}|^2\,{\rm d}z+\frac{s+t}{4s+2t-3}\int_{B_r(y_{\varepsilon})}[g(u_{\varepsilon})u_{\varepsilon}-\frac{4s+2t}{s+t}G(u_{\varepsilon})]\,{\rm d}z\\
&\leq Cd_0+o(1).
\end{align*}
This leads to a contradiction if $d_0$ is small enough.

From \eqref{equ4-3} and the Vanishing Lemma \ref{lem2-3}, we conclude that
\begin{equation}\label{equ4-8}
\lim_{\varepsilon\rightarrow0}\int_{A_{\varepsilon}^1}|u_{\varepsilon}|^p\,{\rm d}z=0\quad p\in(2,2_s^{\ast}),
\end{equation}
where $A_{\varepsilon}^1=B_{2\beta/\varepsilon}(\frac{x_{\varepsilon}}{\varepsilon})\backslash B_{\beta/\varepsilon}(\frac{x_{\varepsilon}}{\varepsilon})$. Indeed, taking a smooth cut-off function $\psi_{\varepsilon}\in C_0^{\infty}(\mathbb{R}^3)$ such that $\psi_{\varepsilon}=1$ on $B_{2\beta/\varepsilon}(\frac{x_{\varepsilon}}{\varepsilon})\backslash B_{\beta/\varepsilon}(\frac{x_{\varepsilon}}{\varepsilon})$, $\psi_{\varepsilon}=0$ on $A_{\varepsilon}^2=B_{3\beta/\varepsilon-1}(\frac{x_{\varepsilon}}{\varepsilon})\backslash B_{\beta/2\varepsilon+1}(\frac{x_{\varepsilon}}{\varepsilon})$.  Since $u_{\varepsilon}\in H_{\varepsilon}$ and using $(V_0)$, it is easy to check that $u_{\varepsilon}\psi_{\varepsilon}\in H^s(\mathbb{R}^3)$. Moreover,
\begin{equation*}
\sup_{y\in A_{\varepsilon}}\int_{B_1(y)}|u_{\varepsilon}|^2\,{\rm d}z\geq\sup_{y\in\mathbb{R}^3}\int_{B_1(y)}|u_{\varepsilon}\psi_{\varepsilon}|^2\,{\rm d}z.
\end{equation*}
By Vanishing Lemma \ref{lem2-3}, we have that for $p\in(2,2_s^{\ast})$,
\begin{equation*}
\int_{\mathbb{R}^3}|u_{\varepsilon}\psi_{\varepsilon}|^p\,{\rm d}z\rightarrow0\quad \text{as}\,\, \varepsilon\rightarrow0.
\end{equation*}
Since $A_{\varepsilon}^1\subset A_{\varepsilon}^2$ for $\varepsilon>0$ small, so \eqref{equ4-8} holds.

{\bf Step 2.} Set $u_{\varepsilon,1}(z)=\varphi(\varepsilon z-x_{\varepsilon})u_{\varepsilon}(z)$, $u_{\varepsilon,2}(z)=(1-\varphi(\varepsilon z-x_{\varepsilon}))u_{\varepsilon}(z)$. Direct computation, we have
\begin{align}\label{equ4-9-0}
\int_{\mathbb{R}^3}|D_s u_{\varepsilon}|^2\,{\rm d}z&=\int_{\mathbb{R}^3}|D_s u_{\varepsilon,1}|^2\,{\rm d}z+\int_{\mathbb{R}^3}|D_s u_{\varepsilon,2}|^2\,{\rm d}z\nonumber\\
&+2\int_{\mathbb{R}^3}\int_{\mathbb{R}^3}\frac{(u_{\varepsilon,1}(x)-u_{\varepsilon,1}(y))(u_{\varepsilon,2}(x)-u_{\varepsilon,2}(y))}{|x-y|^{3+2s}}\,{\rm d}y\,{\rm d}z\nonumber\\
&\geq\int_{\mathbb{R}^3}|D_su_{\varepsilon,1}|^2\,{\rm d}z+\int_{\mathbb{R}^3}|D_s u_{\varepsilon,2}|^2\,{\rm d}z+o(1).
\end{align}
Indeed,
\begin{align*}
&(u_{\varepsilon,1}(z)-u_{\varepsilon,1}(y))(u_{\varepsilon,2}(z)-u_{\varepsilon,2}(y))\\
&=\varphi(\varepsilon z-x_{\varepsilon})(1-\varphi(\varepsilon z-x_{\varepsilon}))|D_su_{\varepsilon}|^2+\varphi(\varepsilon z-x_{\varepsilon})(\varphi(\varepsilon z-x_{\varepsilon})-\varphi(\varepsilon y-x_{\varepsilon}))\\
&(u_{\varepsilon}(z)-u_{\varepsilon}(y))u_{\varepsilon}(y)+(1-\varphi(\varepsilon z-x_{\varepsilon}))(\varphi(\varepsilon z-x_{\varepsilon})-\varphi(\varepsilon y-x_{\varepsilon}))(u_{\varepsilon}(z)-u_{\varepsilon}(y))u_{\varepsilon}(y)\\
&-(\varphi(\varepsilon z-x_{\varepsilon})-\varphi(\varepsilon y-x_{\varepsilon}))|u_{\varepsilon}(y)|^2\\
&:=\varphi(\varepsilon z-x_{\varepsilon})(1-\varphi(\varepsilon z-x_{\varepsilon}))|D_su_{\varepsilon}|^2+B_1+B_2-B_3.
\end{align*}
Next we show that $\lim\limits_{\varepsilon\rightarrow0}\int_{\mathbb{R}^3}B_i\,{\rm d}z=0$, $i=1,2,3$. If these are proved, we get
\begin{equation}\label{equ4-9-1}
\int_{\mathbb{R}^3}\int_{\mathbb{R}^3}\frac{(u_{\varepsilon,1}(x)-u_{\varepsilon,1}(y))(u_{\varepsilon,2}(x)-u_{\varepsilon,2}(y))}{|x-y|^{3+2s}}\,{\rm d}y\,{\rm d}z\geq\int_{\mathbb{R}^3}\varphi(\varepsilon z-x_{\varepsilon})(1-\varphi(\varepsilon z-x_{\varepsilon}))|D_su_{\varepsilon}|^2+o(1)
\end{equation}
and so \eqref{equ4-9-0} follows. Here $o(1)\rightarrow0$ as $\varepsilon\rightarrow0$.

Observe that
\begin{align*}
\int_{\mathbb{R}^3}B_1\,{\rm d }z&\leq\Big(\int_{\mathbb{R}^3}(\varphi(\varepsilon z-x_{\varepsilon}))^2|D_su_{\varepsilon}|^2\,{\rm d}z\Big)^{\frac{1}{2}}\Big(\int_{\mathbb{R}^3}|D_s\varphi(\varepsilon\cdot-x_{\varepsilon})|^2u_{\varepsilon}^2\,{\rm d}z\Big)^{\frac{1}{2}}\\
&\leq C\Big(\int_{\mathbb{R}^3}|D_s\varphi(\varepsilon\cdot-x_{\varepsilon})|^2u_{\varepsilon}^2\,{\rm d}z\Big)^{\frac{1}{2}}=C(\int_{\mathbb{R}^3}B_3\,{\rm d}z)^{\frac{1}{2}}
\end{align*}
and similarly, we have
\begin{equation*}
\int_{\mathbb{R}^3}B_2\,{\rm d}z\leq\Big(\int_{\mathbb{R}^3}|D_s\varphi(\varepsilon\cdot-x_{\varepsilon})|^2u_{\varepsilon}^2\,{\rm d}z\Big)^{\frac{1}{2}}=C(\int_{\mathbb{R}^3}B_3\,{\rm d}z)^{\frac{1}{2}}.
\end{equation*}
Hence, it is sufficient to prove that
\begin{equation}\label{equ4-9-2}
\lim_{\varepsilon\rightarrow0}\int_{\mathbb{R}^3}B_3\,{\rm d}z=0.
\end{equation}
In fact, direct computations, we deduce that
\begin{align*}
\int_{\mathbb{R}^3}B_3\,{\rm d}z&=\int_{\mathbb{R}^3}u_{\varepsilon}^2\int_{\mathbb{R}^3}\frac{|\varphi(\varepsilon z-x_{\varepsilon})-\varphi(\varepsilon y-x_{\varepsilon})|^2}{|z-y|^{3+2s}}\,{\rm d}y\,{\rm d}z\\
&=\varepsilon^{2s-3}\int_{\mathbb{R}^3}u_{\varepsilon}^2(\frac{z+x_{\varepsilon}}{\varepsilon})\int_{\mathbb{R}^3}\frac{|\varphi(z)-\varphi(y)|^2}{|z-y|^{3+2s}}\,{\rm d}y\,{\rm d}z\\
&\leq\varepsilon^{2s-3}\int_{\mathbb{R}^3}u_{\varepsilon}^2(\frac{z+x_{\varepsilon}}{\varepsilon})\frac{C}{\beta^2}\int_{|z-y|\leq\beta}\frac{1}{|z-y|^{1+2s}}\,{\rm d}y+\int_{|z-y|>\beta}\frac{1}{|z-y|^{3+2s}}\,{\rm d}y\,{\rm d}z\\
&\leq\frac{C\varepsilon^{2s-3}}{\beta^{2s}}\int_{\mathbb{R}^3}u_{\varepsilon}^2(\frac{z+x_{\varepsilon}}{\varepsilon})\,{\rm d}z=\frac{C\varepsilon^{2s}}{\beta^{2s}}\int_{\mathbb{R}^3}u_{\varepsilon}^2\,{\rm d}z\leq\frac{C}{\beta^{2s}}\varepsilon^{2s}.
\end{align*}
From the estimate above, we conclude that \eqref{equ4-9-2} follows. Thus \eqref{equ4-9-0} holds.

By \eqref{equ4-8}, we deduce that
\begin{align*}
\int_{\mathbb{R}^3}V(\varepsilon z)|u_{\varepsilon}|^2\,{\rm d}z\geq\int_{\mathbb{R}^3}V(\varepsilon z)|u_{\varepsilon,1}|^2\,{\rm d}z+\int_{\mathbb{R}^3}V(\varepsilon z)|u_{\varepsilon,2}|^2\,{\rm d}z
\end{align*}
\begin{align*}
\int_{\mathbb{R}^3}\phi_{u_{\varepsilon}}^t|u_{\varepsilon}|^2\,{\rm d}z\geq\int_{\mathbb{R}^3}\phi_{u_{\varepsilon,1}}^t|u_{\varepsilon,1}|^2\,{\rm d}z+\int_{\mathbb{R}^3}\phi_{u_{\varepsilon,2}}^t|u_{\varepsilon,2}|^2\,{\rm d}z
\end{align*}
\begin{align*}
\int_{\mathbb{R}^3}F(\varepsilon z,u_{\varepsilon})\,{\rm d}z=\int_{\mathbb{R}^3}F(\varepsilon z,u_{\varepsilon,1})\,{\rm d}z+\int_{\mathbb{R}^3}F(\varepsilon z,u_{\varepsilon,2})\,{\rm d}z+o(1)\,\, \text{as}\,\,\varepsilon\rightarrow0
\end{align*}
and
\begin{equation*}
Q_{\varepsilon}(u_{\varepsilon,1})=0, \quad Q_{\varepsilon}(u_{\varepsilon,2})=Q_{\varepsilon}(u_{\varepsilon})\geq0.
\end{equation*}
Hence, we get
\begin{equation}\label{equ4-10}
\mathcal{J}_{\varepsilon}(u_{\varepsilon})\geq P_{\varepsilon}(u_{\varepsilon,1})+P_{\varepsilon}(u_{\varepsilon,2})+o(1),
\end{equation}
where $o(1)\rightarrow0$ as $\varepsilon\rightarrow0$.

We now estimate $P_{\varepsilon}(u_{\varepsilon,2})$. It follows from \eqref{equ4-2} that
\begin{align*}
\|u_{\varepsilon,2}\|_{H_{\varepsilon}}&\leq6d_0+o(1),
\end{align*}
where $o(1)\rightarrow0$ as $\varepsilon\rightarrow0$ and the above inequality implies that
\begin{equation}\label{equ4-11}
\limsup\limits_{\varepsilon\rightarrow0}\|u_{\varepsilon,2}\|_{H_{\varepsilon}}\leq 6d_0.
\end{equation}
Then, by \eqref{equ3-4}, we get
\begin{align}\label{equ4-12}
P_{\varepsilon}(u_{\varepsilon,2})&\geq\frac{1}{2}\|u_{\varepsilon,2}\|_{H_{\varepsilon}}^2-\int_{\mathbb{R}^3}F(\varepsilon z,u_{\varepsilon,2})\,{\rm d}z\geq\frac{1}{4}\|u_{\varepsilon,2}\|_{H_{\varepsilon}}^2-C\|u_{\varepsilon,2}\|_{H_{\varepsilon}}^{2_s^{\ast}}\nonumber\\
&=\|u_{\varepsilon,2}\|_{H_{\varepsilon}}^2(\frac{1}{4}-C\|u_{\varepsilon,2}\|_{H_{\varepsilon}}^{2_s^{\ast}-2})\geq\|u_{\varepsilon,2}\|_{H_{\varepsilon}}^2(\frac{1}{4}-C(6d_0)^{2_s^{\ast}-2}).
\end{align}
In particular, taking $d_0>0$ small enough, we can assume that $P_{\varepsilon}(u_{\varepsilon,2})\geq0$. Hence, from \eqref{equ4-10}, it holds
\begin{equation}\label{equ4-13}
\mathcal{J}_{\varepsilon}(u_{\varepsilon})\geq P_{\varepsilon}(u_{\varepsilon,1})+o(1).
\end{equation}
Furthermore, by \eqref{equ4-8} and \eqref{equ4-9-1}, it is easy to check that
\begin{align*}
\int_{\mathbb{R}^3}\phi_{u_{\varepsilon}}^tu_{\varepsilon,1}u_{\varepsilon,2}\,{\rm d}z\leq\int_{A_{\varepsilon}^1}\phi_{u_{\varepsilon}}^t|u_{\varepsilon}|^2\,{\rm d}z\leq\|\phi_{u_{\varepsilon}}^t\|_{2_t^{\ast}}\|u_{\varepsilon}\|_{L^{\frac{12}{3+2t}}(A_{\varepsilon}^1)}^2\rightarrow0
\end{align*}
and
\begin{align*}
\int_{\mathbb{R}^3}\int_{\mathbb{R}^3}\frac{(u_{\varepsilon,1}(z)-u_{\varepsilon,1}(y))(u_{\varepsilon,2}(z)-u_{\varepsilon,2}(y))}{|z-y|^{3+2s}}\,{\rm d}y\,{\rm d}z\geq o(1).
\end{align*}
Hence, using the facts that $\langle\mathcal{J}_{\varepsilon}'(u_{\varepsilon}),u_{\varepsilon,2}\rangle\rightarrow0$ as $\varepsilon\rightarrow0$, $\langle Q_{\varepsilon}'(u_{\varepsilon}),u_{\varepsilon,2}\rangle\geq0$ and \eqref{equ3-4}, we have that
\begin{align*}
&\|u_{\varepsilon,2}\|_{H_{\varepsilon}}^2+o(1)\\
&\leq\|u_{\varepsilon,2}\|_{H_{\varepsilon}}^2+\int_{\mathbb{R}^3}\int_{\mathbb{R}^3}\frac{(u_{\varepsilon,1}(z)-u_{\varepsilon,1}(y))(u_{\varepsilon,2}(z)-u_{\varepsilon,2}(y))}{|z-y|^{3+2s}}\,{\rm d}y\,{\rm d}z+\int_{\mathbb{R}^3}V(\varepsilon z)u_{\varepsilon,1}u_{\varepsilon,2}\,{\rm d}z\\
&+\int_{\mathbb{R}^3}\phi_{u_{\varepsilon}}^tu_{\varepsilon,1}u_{\varepsilon,2}\,{\rm d}z\\
&\leq\int_{\mathbb{R}^3}\int_{\mathbb{R}^3}\frac{(u_{\varepsilon}(z)-u_{\varepsilon}(y))(u_{\varepsilon,2}(z)-u_{\varepsilon,2}(y))}{|z-y|^{3+2s}}\,{\rm d}y\,{\rm d}z+\int_{\mathbb{R}^3}V(\varepsilon x)u_{\varepsilon}u_{\varepsilon,2}\,{\rm d}z+\langle Q_{\varepsilon}(u_{\varepsilon}),u_{\varepsilon,2}\rangle\\
&+\int_{\mathbb{R}^3}\phi_{u_{\varepsilon}}^tu_{\varepsilon}u_{\varepsilon,2}\,{\rm d}z+o(1)=\int_{\mathbb{R}^3}f(\varepsilon z,u_{\varepsilon})u_{\varepsilon,2}\,{\rm d}z+o(1)\\
&\leq \eta\int_{\mathbb{R}^3}|u_{\varepsilon}u_{\varepsilon,2}|\,{\rm d}z+C\int_{\mathbb{R}^3}|u_{\varepsilon}|^{2_s^{\ast}-1}|u_{\varepsilon,2}|\,{\rm d}z+o(1)\\
&\leq\eta\|u_{\varepsilon,2}\|_{L^2}^2+C\int_{\mathbb{R}^3}\Big(|u_{\varepsilon,2}|^{2_s^{\ast}}+|u_{\varepsilon,1}|^{2_s^{\ast}-1}|u_{\varepsilon,2}|\Big)\,{\rm d}x+o(1)\leq\eta\|u_{\varepsilon,2}\|_{H_{\varepsilon}}^2+C\|u_{\varepsilon,2}\|_{H_{\varepsilon}}^{2_s^{\ast}}+o(1).
\end{align*}
Combining with \eqref{equ4-11}, we get that
\begin{equation*}
(\frac{1}{2}-Cd_0^{2_s^{\ast}-2})\|u_{\varepsilon,2}\|_{H_{\varepsilon}}^2\leq(\frac{1}{2}-C\|u_{\varepsilon,2}\|_{H_{\varepsilon}}^{2_s^{\ast}-2})\|u_{\varepsilon,2}\|_{H_{\varepsilon}}^2+o(1)\leq o(1).
\end{equation*}
Thus, taking $d_0>0$ sufficiently small, we have
\begin{equation}\label{equ4-13-0}
\lim_{\varepsilon\rightarrow0}\|u_{\varepsilon,2}\|_{H_{\varepsilon}}=0.
\end{equation}

We next estimate $P_{\varepsilon}(u_{\varepsilon,1})$. Denote $\widehat{u}_{\varepsilon}(z)=u_{\varepsilon,1}(z+\frac{x_{\varepsilon}}{\varepsilon})=\varphi(\varepsilon z)u_{\varepsilon}(z+\frac{x_{\varepsilon}}{\varepsilon})$, then $\{\widehat{u}_{\varepsilon}\}$ is bounded in $H^s(\mathbb{R}^3)$ by virtue of $(V_0)$. Thus, up to a subsequence, we may assume that there exists a $\widehat{u}\in H^s(\mathbb{R}^3)$ such that $\widehat{u}_{\varepsilon}\rightharpoonup\widehat{u}$ in $H^s(\mathbb{R}^3)$, $\widehat{u}_{\varepsilon}\rightarrow\widehat{u}$ in $L_{loc}^p(\mathbb{R}^3)$ for $1\leq p<2_s^{\ast}$, $\widehat{u}_{\varepsilon}\rightarrow\widehat{u}$ a.e. in $\mathbb{R}^3$ and $\widehat{u}$ satisfies
\begin{equation}\label{equ4-13-1}
(-\Delta)^s v+V(x_0) v+\phi_{v}^t v=g(v)   \quad z\in \mathbb{R}^3.
\end{equation}

We now claim that
\begin{equation}\label{equ4-14}
\lim_{\varepsilon\rightarrow0}\sup_{y\in\mathbb{R}^3}\int_{B_1(y)}|\widehat{u}_{\varepsilon}-\widehat{u}|^2\,{\rm d}z=0.
\end{equation}
Suppose the contrary that there exists $\widehat{y}_{\varepsilon}\in\mathbb{R}^3$ such that
\begin{equation}\label{equ4-15}
\lim_{\varepsilon\rightarrow0}\int_{B_1(\widehat{y}_{\varepsilon})}|\widehat{u}_{\varepsilon}-\widehat{u}|^2\,{\rm d}z>0.
\end{equation}
Since $\widehat{u}_{\varepsilon}\rightarrow\widehat{u}$ in $L_{loc}^p(\mathbb{R}^3)$ for $1\leq p<2_s^{\ast}$, we have $\{\widehat{y}_{\varepsilon}\}\subset\mathbb{R}^3$ must be unbounded. Thus, up to a subsequence, still denoted by $\{\widehat{y}_{\varepsilon}\}$, we may assume that $|\widehat{y}_{\varepsilon}|\rightarrow+\infty$ as $\varepsilon\rightarrow0$. Therefore,
\begin{equation}\label{equ4-22}
\lim_{\varepsilon\rightarrow0}\int_{B_1(\widehat{y}_{\varepsilon})}|\widehat{u}|^2\,{\rm d}z=0,\quad \lim_{\varepsilon\rightarrow0}\int_{B_1(\widehat{y}_{\varepsilon})}|\widehat{u}_{\varepsilon}|^2\,{\rm d}z>0.
\end{equation}

Since $\varphi(z)=0$ for $|z|\geq2\beta$, so $|\widehat{y}_{\varepsilon}|\leq\frac{3\beta}{\varepsilon}$ for $\varepsilon$ small. If $|\widehat{y}_{\varepsilon}|\geq\frac{\beta}{2\varepsilon}$, then $\widehat{y}_{\varepsilon}\in B_{3\beta/\varepsilon}(0)\backslash B_{\beta/2\varepsilon}(0)$, and by \eqref{equ4-3}, we get
\begin{align*}
\liminf_{\varepsilon\rightarrow0}\int_{B_1(\widehat{y}_{\varepsilon})}|\widehat{u}_{\varepsilon}|^2\,{\rm d}z&\leq\liminf_{\varepsilon\rightarrow0}\sup_{y\in B_{3\beta/\varepsilon}(0)\backslash B_{\beta/2\varepsilon}(0)}\int_{B_1(y)}|u_{\varepsilon}(z+\frac{x_{\varepsilon}}{\varepsilon})|^2\,{\rm d}z\\
&\leq\liminf_{\varepsilon\rightarrow0}\sup_{y\in A_{\varepsilon}}\int_{B_1(y)}|\widehat{u}_{\varepsilon}|^2\,{\rm d}z=0
\end{align*}
which contradicts with \eqref{equ4-22}. Thus $|\widehat{y}_{\varepsilon}|\leq\frac{\beta}{2\varepsilon}$ for $\varepsilon>0$ small. Without loss of generality, we may assume that $\varepsilon\widehat{y}_{\varepsilon}\rightarrow z_0\in \overline{B_{\beta/2}(0)}$ and $\widetilde{u}_{\varepsilon}\rightharpoonup\widetilde{u}$ in $H^s(\mathbb{R}^3)$, where $\widetilde{u}_{\varepsilon}(z):=\widehat{u}_{\varepsilon}(z+\widehat{y}_{\varepsilon})$. Obviously, $\widetilde{u}\neq0$. It is easy to check that $\widetilde{u}$ satisfies that
\begin{equation*}
(-\Delta)^s v+V(x_0+z_0) v+\phi_{v}^tv=g(v)\quad \text{in}\,\,\mathbb{R}^3.
\end{equation*}
Similarly as in the proof of the case $v\neq0$ of the claim \eqref{equ4-3}, we can get a contradiction for $d_0$ sufficient small. Hence, the claim \eqref{equ4-14} holds and so using the Vanishing Lemma \ref{lem2-3}, we see that
\begin{equation}\label{equ4-23}
\widehat{u}_{\varepsilon}\rightarrow \widehat{u}\quad \text{in}\,\, L^p(\mathbb{R}^3), \,\, p\in(2,2_s^{\ast}).
\end{equation}

By \eqref{equ4-13}, recalling that $\widehat{u}_{\varepsilon}(z)=u_{\varepsilon,1}(z+\frac{x_{\varepsilon}}{\varepsilon})$, we have
\begin{align*}
P_{\varepsilon}(\widehat{u}_{\varepsilon})\leq c_{V_0}+o(1).
\end{align*}
Letting $\varepsilon\rightarrow0$, and using \eqref{equ4-23}, $(V_0)$, we get
\begin{align*}
\mathcal{I}_{V(x_0)}(\widehat{u})\leq c_{V_0}.
\end{align*}
On the other hand, in view of $\langle\mathcal{J}'_{\varepsilon}(u_{\varepsilon}),u_{\varepsilon,1}\rangle\rightarrow0$ and \eqref{equ4-13-0}, and $\langle Q_{\varepsilon}'(u_{\varepsilon}),u_{\varepsilon,1}\rangle=0$, we deduce that
\begin{align*}
\int_{\mathbb{R}^3}|D_s\widehat{u}_{\varepsilon}|^2\,{\rm d}z&+\int_{\mathbb{R}^3}V(\varepsilon z+x_{\varepsilon})|\widehat{u}_{\varepsilon}|^2\,{\rm d}z+\int_{\mathbb{R}^3}\phi_{\widehat{u}_{\varepsilon}}^t|\widehat{u}_{\varepsilon}|^2\,{\rm d}z=\int_{\mathbb{R}^3}f(\varepsilon z,\widehat{u}_{\varepsilon})\widehat{u}_{\varepsilon}\,{\rm d}z+o(1),
\end{align*}
then by Fatou's Lemma, \eqref{equ4-23} and \eqref{equ4-13-1}, we have that
\begin{align*}
&\int_{\mathbb{R}^3}|D_s\widehat{u}|^2\,{\rm d}z+\int_{\mathbb{R}^3}V(x_0)|\widehat{u}|^2\,{\rm d}z+\int_{\mathbb{R}^3}\phi_{\widehat{u}}^t|\widehat{u}|^2\,{\rm d}z\\
&\leq\liminf_{\varepsilon\rightarrow0}\Big(\int_{\mathbb{R}^3}|D_s\widehat{u}_{\varepsilon}|^2\,{\rm d}z+\int_{\mathbb{R}^3}V(\varepsilon z+x_{\varepsilon})|\widehat{u}_{\varepsilon}|^2\,{\rm d}z+\int_{\mathbb{R}^3}\phi_{\widehat{u}_{\varepsilon}}^t|\widehat{u}_{\varepsilon}|^2\,{\rm d}z\Big)\\
&=\liminf_{\varepsilon\rightarrow0}\int_{\mathbb{R}^3}f(\varepsilon z,\widehat{u}_{\varepsilon})\widehat{u}_{\varepsilon}\,{\rm d}z=\int_{\mathbb{R}^3}g(\widehat{u})\widehat{u}\,{\rm d}z\\
&=\int_{\mathbb{R}^3}|D_s\widehat{u}|^2\,{\rm d}z+\int_{\mathbb{R}^3}V(x_0)|\widehat{u}|^2\,{\rm d}z+\int_{\mathbb{R}^3}\phi_{\widehat{u}}^t|\widehat{u}|^2\,{\rm d}z,
\end{align*}
which implies that
\begin{equation*}
\int_{\mathbb{R}^3}|D_s\widehat{u}_{\varepsilon}|^2\,{\rm d}z\rightarrow\int_{\mathbb{R}^3}|D_s\widehat{u}|^2\,{\rm d}z,
\end{equation*}
and
\begin{equation*}
\int_{\mathbb{R}^3}V(\varepsilon z+x_{\varepsilon})|\widehat{u}_{\varepsilon}|^2\,{\rm d}z\rightarrow\int_{\mathbb{R}^3}V(x_0)|\widehat{u}|^2\,{\rm d}z.
\end{equation*}
Hence, by $(V_0)$, we can deduce that
\begin{equation}\label{equ4-24}
\widehat{u}_{\varepsilon}\rightarrow\widehat{u}\quad  \text{in}\,\, H^s(\mathbb{R}^3).
\end{equation}
By \eqref{equ4-2}, \eqref{equ4-23}, it is easy to check that $\widehat{u}\neq0$. By \eqref{equ4-13-1}, we have $\mathcal{I}_{V(x_0)}(\widehat{u})\geq c_{V(x_0)}$. Hence, $\mathcal{I}_{V(x_0)}(\widehat{u})= c_{V(x_0)}$ is proved. In view of $x_0\in\mathcal{M}^{\beta}\subset\Lambda$, we have that $V(x_0)=V_0$ and $x_0\in\mathcal{M}$. As a consequence, $\widehat{u}$ is, up to a translation in the $x-$variable, an element of $\mathcal{L}_{V_0}$, namely there exists $W\in\mathcal{L}_{V_0}$ and $z_0\in\mathbb{R}^3$ such that $\widehat{u}(z)=W(z-z_0)$. Consequently, from \eqref{equ4-2}, \eqref{equ4-13-0} and \eqref{equ4-24}, we have that
\begin{equation*}
\|u_{\varepsilon}-\varphi_{\varepsilon}(\cdot-\frac{x_{\varepsilon}}{\varepsilon}-z_0)W(\cdot-\frac{x_{\varepsilon}}{\varepsilon}-z_0)\|_{H_{\varepsilon}}\rightarrow0\quad \text{as}\,\, \varepsilon\rightarrow0.
\end{equation*}
Observing that $\varepsilon(\frac{x_{\varepsilon}}{\varepsilon}+z_0)\rightarrow x_0\in\mathcal{M}$ as $\varepsilon\rightarrow0$, so the proof is completed.
\end{proof}

For $a\in\mathbb{R}$ we define the sublevel set of $\mathcal{J}_{\varepsilon}$ as follows
\begin{equation*}
\mathcal{J}_{\varepsilon}^a=\{u\in H_{\varepsilon}\,\,\Big|\,\, \mathcal{J}_{\varepsilon}(u)\leq a\}.
\end{equation*}

We observe that the result of Lemma \ref{lem4-1} holds for $d_0>0$ sufficiently small independently of the sequences satisfying the assumptions.
\begin{lemma}\label{lem4-2}
Let $d_0$ be the number given in Lemma \ref{lem4-1}. Then for any $d\in(0,d_0)$, there exist positive constants $\varepsilon_d>0$, $\rho_d>0$ and $\alpha_d>0$ such that
\begin{equation*}
\|\mathcal{J}'_{\varepsilon}(u)\|_{(H_{\varepsilon})'}\geq\alpha_d>0\quad \text{for every}\,\, u\in\mathcal{J}_{\varepsilon}^{c_{V_0}+\rho_d}\cap(\mathcal{N}_{\varepsilon}^{d_0}\backslash\mathcal{N}_{\varepsilon}^d)\,\,\text{and}\,\,\varepsilon\in(0,\varepsilon_d).
\end{equation*}
\end{lemma}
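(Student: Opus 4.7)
\smallskip
\noindent\textbf{Proof proposal.} The plan is to argue by contradiction and reduce the statement to a direct application of Lemma \ref{lem4-1}. Suppose the conclusion fails for some fixed $d\in(0,d_0)$. Negating the statement produces sequences $\varepsilon_n\to0^+$, $\rho_n\to0^+$, $\alpha_n\to0^+$, together with functions
\[
u_n\in \mathcal{J}_{\varepsilon_n}^{\,c_{V_0}+\rho_n}\cap\bigl(\mathcal{N}_{\varepsilon_n}^{d_0}\setminus\mathcal{N}_{\varepsilon_n}^{d}\bigr)
\]
such that $\|\mathcal{J}'_{\varepsilon_n}(u_n)\|_{(H_{\varepsilon_n})'}<\alpha_n$. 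My first step is to verify that $\{u_n\}$ fits the hypotheses of Lemma \ref{lem4-1}. From the energy bound $\mathcal{J}_{\varepsilon_n}(u_n)\leq c_{V_0}+\rho_n$ with $\rho_n\to0$, one has $\limsup_{n\to\infty}\mathcal{J}_{\varepsilon_n}(u_n)\leq c_{V_0}$, while $\|\mathcal{J}'_{\varepsilon_n}(u_n)\|_{(H_{\varepsilon_n})'}<\alpha_n\to0$ gives $\mathcal{J}'_{\varepsilon_n}(u_n)\to0$; finally, $u_n\in\mathcal{N}_{\varepsilon_n}^{d_0}$ by construction.

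Next, Lemma \ref{lem4-1} applies and yields (up to a subsequence) points $\{x_n\}\subset\mathbb{R}^3$, $x_0\in\mathcal{M}$, and $W\in\mathcal{L}_{V_0}$ with
\[
\varepsilon_n x_n\longrightarrow x_0\in\mathcal{M},\qquad
\bigl\|u_n-\varphi_{\varepsilon_n}(\,\cdot-x_n)\,W(\,\cdot-x_n)\bigr\|_{H_{\varepsilon_n}}\longrightarrow 0.
\]
Setting $y_n:=\varepsilon_n x_n$, the convergence $y_n\to x_0\in\mathcal{M}$ together with $\beta>0$ ensures that $y_n\in\mathcal{M}^{\beta}$ for all $n$ sufficiently large; consequently,
\[
v_n(z):=\varphi_{\varepsilon_n}\!\Bigl(z-\tfrac{y_n}{\varepsilon_n}\Bigr)W\!\Bigl(z-\tfrac{y_n}{\varepsilon_n}\Bigr)=W_{\varepsilon_n}^{\,y_n}(z)\in\mathcal{N}_{\varepsilon_n}.
\]
Since $\|u_n-v_n\|_{H_{\varepsilon_n}}\to0$, we obtain $\mathrm{dist}_{H_{\varepsilon_n}}(u_n,\mathcal{N}_{\varepsilon_n})<d$ for all large $n$, i.e.\ $u_n\in\mathcal{N}_{\varepsilon_n}^{d}$, contradicting the choice $u_n\notin\mathcal{N}_{\varepsilon_n}^{d}$. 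This contradiction proves the lemma.

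The bulk of the work has already been carried out in Lemma \ref{lem4-1}; what I expect to be the only delicate step is the verification that the approximation vector produced by Lemma \ref{lem4-1} actually belongs to the reference set $\mathcal{N}_{\varepsilon_n}$ — a point where one must carefully match the parametrisation $W_\varepsilon^y(z)=\varphi_\varepsilon(z-y/\varepsilon)W(z-y/\varepsilon)$ with the translate $\varphi_{\varepsilon_n}(\cdot-x_n)W(\cdot-x_n)$ supplied by Lemma \ref{lem4-1}, and rely on $y_n=\varepsilon_n x_n\in\mathcal{M}^{\beta}$ eventually. The freedom in choosing $d<d_0$ is exactly the room needed to conclude $u_n\in\mathcal{N}_{\varepsilon_n}^{d}$ from arbitrarily small approximation, which is why the threshold $d_0$ from Lemma \ref{lem4-1} is the natural ceiling for the present statement.
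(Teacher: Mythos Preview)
Your argument is correct and follows essentially the same route as the paper: assume the conclusion fails for some $d\in(0,d_0)$, extract sequences with $\varepsilon_n\to0$, $\mathcal{J}_{\varepsilon_n}(u_n)\le c_{V_0}+o(1)$, $\mathcal{J}'_{\varepsilon_n}(u_n)\to0$ and $u_n\in\mathcal{N}_{\varepsilon_n}^{d_0}\setminus\mathcal{N}_{\varepsilon_n}^{d}$, then invoke Lemma~\ref{lem4-1} to produce an approximant in $\mathcal{N}_{\varepsilon_n}$ and derive the contradiction $u_n\in\mathcal{N}_{\varepsilon_n}^{d}$. Your additional remark checking that $y_n=\varepsilon_n x_n\in\mathcal{M}^{\beta}$ eventually is exactly the point the paper also isolates.
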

\begin{proof}
By contradiction we suppose that for some $d\in(0,d_0)$, there exists $\{\varepsilon_i\}$, $\{\rho_i\}$ and $u_i\in \mathcal{J}_{\varepsilon_i}^{c_{V_0}+\rho_i}\cap (\mathcal{N}_{\varepsilon_i}^{d_0}\backslash\mathcal{N}_{\varepsilon_i}^d)$ such that
\begin{equation*}
\|\mathcal{J}_{\varepsilon_i}'(u_i)\|_{(H_{\varepsilon_i})'}\rightarrow0\quad \text{as}\,\, i\rightarrow\infty.
\end{equation*}
By Lemma \ref{lem4-1}, we can find $\{y_i\}\subset\mathbb{R}^3$, $x_0\in\mathcal{M}$, $W\in \mathcal{L}_{V_0}$ such that
\begin{equation*}
\lim_{i\rightarrow\infty}|\varepsilon_iy_i-x_0|\rightarrow0\quad \lim_{i\rightarrow\infty}\|u_i-\varphi_{\varepsilon_i}(\cdot-y_i)W(\cdot-y_i)\|_{H_{\varepsilon_i}}=0.
\end{equation*}
Thus, $\varepsilon_iy_i\in\mathcal{M}^{\beta}$ for sufficiently large $i$ and then by the definition of $\mathcal{N}_{\varepsilon_i}$ and $\mathcal{N}_{\varepsilon_i}^d$, we obtain that $\varphi_{\varepsilon_i}(\cdot-y_i)W(\cdot-y_i)\in\mathcal{N}_{\varepsilon_i}$ and $u_i\in \mathcal{N}_{\varepsilon_i}^d$ for sufficiently large $i$. This contradicts with $u_i\not\in\mathcal{N}_{\varepsilon_i}^d$ and completes the proof.
\end{proof}
We recall the definition \eqref{equ4-1} of $\gamma_{\varepsilon}(\tau)$. The following Lemma holds.

\begin{lemma}\label{lem4-3}
There exists $M_0>0$ such that for any $\delta>0$ small, there exists $\alpha_{\delta}>0$ and $\varepsilon_{\delta}>0$ such that if $\mathcal{J}_{\varepsilon}(\gamma_{\varepsilon}(\tau))\geq c_{V_0}-\alpha_{\delta}$ and $\varepsilon\in(0,\varepsilon_{\delta})$, then $\gamma_{\varepsilon}(\tau)\in\mathcal{N}_{\varepsilon}^{M_0\delta}$.
\end{lemma}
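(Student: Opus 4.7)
The plan is to reduce the statement to a perturbation of the behavior of the reference curve $\sigma \mapsto \mathcal{I}_{V_0}(W^*_\sigma)$ near its unique maximum at $\sigma = 1$. First, Lemma \ref{lem4-0-2} lets me transfer the energy lower bound: for every $\eta_0 > 0$ there exists $\varepsilon_{\eta_0} > 0$ such that $|\mathcal{J}_\varepsilon(\gamma_\varepsilon(\tau)) - \mathcal{I}_{V_0}(W^*_{\tau\tau_0})| < \eta_0$ for all $\tau \in [0,1]$ and $\varepsilon \in (0,\varepsilon_{\eta_0})$, so the hypothesis $\mathcal{J}_\varepsilon(\gamma_\varepsilon(\tau)) \geq c_{V_0} - \alpha_\delta$ yields $\mathcal{I}_{V_0}(W^*_{\tau\tau_0}) \geq c_{V_0} - (\alpha_\delta + \eta_0)$. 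Second, Proposition \ref{pro3-2}(ii) applied to $W^* \in \mathcal{M}_{V_0}$, together with $\mathcal{I}_{V_0}(W^*_0) = 0$ and $\mathcal{I}_{V_0}(W^*_\sigma) \to -\infty$ as $\sigma \to +\infty$ (Lemma \ref{lem3-1}), ensures that $\sigma = 1$ is the unique global maximizer of the continuous function $\sigma \mapsto \mathcal{I}_{V_0}(W^*_\sigma)$ on $[0,+\infty)$ with value $c_{V_0}$. Hence for every small $\delta > 0$ there is $\kappa(\delta) > 0$ such that $\mathcal{I}_{V_0}(W^*_\sigma) \geq c_{V_0} - \kappa(\delta)$ implies $|\sigma - 1| < \delta$. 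Choosing $\alpha_\delta = \eta_0 = \kappa(\delta)/3$ and $\varepsilon_\delta = \varepsilon_{\eta_0}$ then forces $|\tau\tau_0 - 1| < \delta$ under the assumptions.

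Next I would establish a uniform Lipschitz estimate
\begin{equation*}
\|\varphi_\varepsilon[(\tau\tau_0)^{s+t}W^*(\tau\tau_0\cdot) - W^*]\|_{H_\varepsilon} \leq M_0 |\tau\tau_0 - 1|
\end{equation*}
valid for $|\tau\tau_0 - 1| \leq 1/2$ and all small $\varepsilon$, with $M_0$ depending only on $W^*$, $\tau_0$, and $\varphi$. The weighted $L^2$ contribution is immediate since $V$ is bounded. For the $D_s$ contribution I would write the difference as $\int_1^{\tau\tau_0}\frac{d}{d\sigma}[\sigma^{s+t}W^*(\sigma \cdot)]\,d\sigma$ and use $W^* \in C^{1,\alpha}(\mathbb{R}^3)$ with the decay $0 < W^*(x) \leq C(1+|x|^{3+2s})^{-1}$ from Proposition \ref{pro3-4}(ii) to bound the integrand uniformly in $H^s(\mathbb{R}^3)$; the control of $\varphi_\varepsilon \psi$ in $H^s$ uniformly in $\varepsilon$ follows from the same fractional seminorm computation performed in the proof of Lemma \ref{lem4-0-1}. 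Since $0 \in \mathcal{M} \subset \mathcal{M}^\beta$ and $W^* \in \mathcal{L}_{V_0}$, the comparison function $\varphi_\varepsilon W^*$ lies in $\mathcal{N}_\varepsilon$, so the displayed estimate together with $|\tau\tau_0 - 1| < \delta$ places $\gamma_\varepsilon(\tau)$ in $\mathcal{N}_\varepsilon^{M_0\delta}$.

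The main obstacle I foresee is the $D_s$ piece of the Lipschitz estimate: getting an $\varepsilon$-independent constant requires handling the cross term produced by the fractional seminorm of the product $\varphi_\varepsilon \psi$ via the $D_s\varphi_\varepsilon$ bookkeeping of Lemma \ref{lem4-0-1}, but applied now to the one-parameter family $\sigma \mapsto \sigma^{s+t}W^*(\sigma\cdot)$ rather than to a fixed profile. Once this uniform Lipschitz bound is in hand, the three steps combine cleanly to give the lemma.
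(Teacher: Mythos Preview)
Your approach is correct and follows the same overall strategy as the paper: transfer the energy bound to $\mathcal{I}_{V_0}(W^*_{\tau\tau_0})$ via Lemma~\ref{lem4-0-2}, exploit that $\sigma=1$ is the unique maximizer of $\sigma\mapsto\mathcal{I}_{V_0}(W^*_\sigma)$, and then estimate the $H_\varepsilon$-distance from $\gamma_\varepsilon(\tau)$ to $\varphi_\varepsilon W^*\in\mathcal{N}_\varepsilon$.

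The paper handles the last step more economically. Rather than the Lipschitz bound you propose, it first isolates the role of $M_0$ through the multiplier inequality
\[
\|\varphi_\varepsilon u\|_{H_\varepsilon}\le M_0\|u\|\qquad\text{for all }u\in H^s(\mathbb{R}^3),
\]
valid uniformly in $\varepsilon$ (this is \eqref{equ4-25}, proved by exactly the fractional seminorm computation you cite from Lemma~\ref{lem4-0-1}). With this in hand one only needs \emph{continuity} of $\sigma\mapsto W^*_\sigma$ into $H^s(\mathbb{R}^3)$, not Lipschitz continuity: since $\sigma=1$ is the unique maximizer and the map is continuous, for each $\delta>0$ there is $\alpha_\delta>0$ such that $\mathcal{I}_{V_0}(W^*_\sigma)\ge c_{V_0}-2\alpha_\delta$ forces $\|W^*_\sigma-W^*\|<\delta$; then \eqref{equ4-25} gives $\|\gamma_\varepsilon(\tau)-\varphi_\varepsilon W^*\|_{H_\varepsilon}=\|\varphi_\varepsilon(W^*_{\tau\tau_0}-W^*)\|_{H_\varepsilon}<M_0\delta$ directly. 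This bypasses the intermediate parameter estimate $|\tau\tau_0-1|<\delta$ altogether.

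The gain is that the ``main obstacle'' you identify disappears. Your Lipschitz route would require bounding the $D_s$-seminorm of $\frac{d}{d\sigma}W^*_\sigma$, which involves the term $z\cdot\nabla W^*(\sigma z)$; the pointwise decay of $W^*$ from Proposition~\ref{pro3-4}(ii) does not by itself control this in $H^s$ without additional decay information on $\nabla W^*$. Separating the $\varepsilon$-dependent multiplier from the $\varepsilon$-independent scaling, and asking only for continuity of the latter, avoids this difficulty entirely.
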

\begin{proof}
First, for any $u\in H^s(\mathbb{R}^3)$, we have that
\begin{align*}
\int_{\mathbb{R}^3}|D_s(\varphi_{\varepsilon}u)|^2\,{\rm d}z&\leq2\int_{\mathbb{R}^3}\varphi_{\varepsilon}^2|D_su|^2\,{\rm d}z+2\int_{\mathbb{R}^3}u^2|D_s\varphi_{\varepsilon}|^2\,{\rm d}z\\
&\leq2\int_{\mathbb{R}^3}|D_su|^2\,{\rm d}z+\Big(\int_{\mathbb{R}^3}|u|^{2_s^{\ast}}\,{\rm d}z\Big)^{\frac{2}{2_s^{\ast}}}\Big(\int_{\mathbb{R}^3}|D_s\varphi_{\varepsilon}|^{\frac{3}{s}}\Big)^{\frac{2s}{3}}\\
&\leq 2\int_{\mathbb{R}^3}|D_su|^2\,{\rm d}z+C\Big(\int_{\mathbb{R}^3}|u|^{2_s^{\ast}}\,{\rm d}z\Big)^{\frac{2}{2_s^{\ast}}}.
\end{align*}
Thus, there exists $M_0>0$ such that
\begin{equation}\label{equ4-25}
\|\varphi_{\varepsilon}u\|_{H_{\varepsilon}}\leq M_0\|u\|.
\end{equation}
The remain proof is similar to the proof of Lemma 4.5 in \cite{HL1}, we omit its proof.
\end{proof}
We are now ready to show that the penalized functional $\mathcal{J}_{\varepsilon}$ possesses a critical point for every $\varepsilon>0$ sufficiently small. Choose $\delta_1>0$ such that $M_0\delta_1<\frac{d_0}{4}$ in Lemma \ref{lem4-3}, and fixing $d=\frac{d_0}{4}:=d_1$ in Lemma \ref{lem4-2}. Similar to the proof of Lemma 4.6 in \cite{HL1}, we can prove the following result.
\begin{lemma}\label{lem4-4}
There exists $\overline{\varepsilon}>0$ such that for each $\varepsilon\in(0,\overline{\varepsilon})$, there exists a sequence $\{u_{\varepsilon,n}\}\subset \mathcal{J}_{\varepsilon}^{\widetilde{\mathcal{C}}_{\varepsilon}+\varepsilon}\cap\mathcal{N}_{\varepsilon}^{d_0}$ such that $\mathcal{J}'_{\varepsilon}(u_{\varepsilon,n})\rightarrow0$ in $(H_{\varepsilon})'$ as $n\rightarrow\infty$.
\end{lemma}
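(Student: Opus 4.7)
\medskip

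\noindent\textbf{Proof proposal for Lemma \ref{lem4-4}.}

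The plan is to argue by contradiction via a pseudo-gradient deformation, exactly in the spirit of Byeon--Jeanjean type local minimax arguments. Here $\widetilde{\mathcal{C}}_{\varepsilon}$ plays the role of the Mountain--Pass value restricted to paths lying in $\mathcal{N}_{\varepsilon}^{d_{0}}$, i.e.
\begin{equation*}
\widetilde{\mathcal{C}}_{\varepsilon}=\inf_{\gamma\in\widetilde{\mathcal{A}}_{\varepsilon}}\max_{\tau\in[0,1]}\mathcal{J}_{\varepsilon}(\gamma(\tau)),\quad \widetilde{\mathcal{A}}_{\varepsilon}=\{\gamma\in\mathcal{A}_{\varepsilon}:\gamma([0,1])\subset \mathcal{N}_{\varepsilon}^{d_{0}}\},
\end{equation*}
so that by construction $\gamma_{\varepsilon}\in\widetilde{\mathcal{A}}_{\varepsilon}$ (for $\varepsilon$ small, thanks to Lemma~\ref{lem4-3}) and $\mathcal{C}_{\varepsilon}\le\widetilde{\mathcal{C}}_{\varepsilon}\le\mathcal{D}_{\varepsilon}$, hence by Lemma~\ref{lem4-0-3} one has $\widetilde{\mathcal{C}}_{\varepsilon}\to c_{V_{0}}$ as $\varepsilon\to0$.

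Assume the statement fails. Then there exist $\varepsilon>0$ small and $a>0$ such that $\|\mathcal{J}_{\varepsilon}'(u)\|_{(H_{\varepsilon})'}\ge a$ for every $u\in\mathcal{J}_{\varepsilon}^{\widetilde{\mathcal{C}}_{\varepsilon}+\varepsilon}\cap\mathcal{N}_{\varepsilon}^{d_{0}}$. First I fix $d_{1}=d_{0}/4$ and apply Lemma~\ref{lem4-2}: there exist $\varepsilon_{d_{1}},\rho_{d_{1}},\alpha_{d_{1}}>0$ with
\begin{equation*}
\|\mathcal{J}_{\varepsilon}'(u)\|_{(H_{\varepsilon})'}\ge\alpha_{d_{1}}\quad \text{on}\quad \mathcal{J}_{\varepsilon}^{c_{V_{0}}+\rho_{d_{1}}}\cap(\mathcal{N}_{\varepsilon}^{d_{0}}\setminus\mathcal{N}_{\varepsilon}^{d_{1}}).
\end{equation*}
Since $\widetilde{\mathcal{C}}_{\varepsilon}+\varepsilon\le c_{V_{0}}+\rho_{d_{1}}$ for $\varepsilon$ small, we can combine the two bounds to get a uniform lower bound $\mu_{0}:=\min\{a,\alpha_{d_{1}}\}>0$ for $\|\mathcal{J}_{\varepsilon}'\|$ on the full set $\mathcal{J}_{\varepsilon}^{\widetilde{\mathcal{C}}_{\varepsilon}+\varepsilon}\cap\mathcal{N}_{\varepsilon}^{d_{0}}$. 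Next I invoke Lemma~\ref{lem4-3} with the parameter $\delta_{1}$ chosen so that $M_{0}\delta_{1}<d_{0}/4=d_{1}$: there exists $\alpha_{\delta_{1}}>0$ such that whenever $\mathcal{J}_{\varepsilon}(\gamma_{\varepsilon}(\tau))\ge c_{V_{0}}-\alpha_{\delta_{1}}$, one has $\gamma_{\varepsilon}(\tau)\in\mathcal{N}_{\varepsilon}^{d_{1}}$. Shrinking $\varepsilon$ further I may assume $\widetilde{\mathcal{C}}_{\varepsilon}+\varepsilon\le c_{V_{0}}+\alpha_{\delta_{1}}/8$ and similarly that $\mathcal{D}_{\varepsilon}<\widetilde{\mathcal{C}}_{\varepsilon}+\varepsilon$.

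Using the bounded Lipschitz pseudo-gradient construction of Willem for $\mathcal{J}_{\varepsilon}$, I build a locally Lipschitz vector field $\mathcal{V}_{\varepsilon}:\mathcal{N}_{\varepsilon}^{d_{0}}\cap\mathcal{J}_{\varepsilon}^{\widetilde{\mathcal{C}}_{\varepsilon}+\varepsilon}\to H_{\varepsilon}$ with $\|\mathcal{V}_{\varepsilon}(u)\|\le1$ and $\langle\mathcal{J}_{\varepsilon}'(u),\mathcal{V}_{\varepsilon}(u)\rangle\ge\mu_{0}/2$. Cutting off in two variables, I set $\mathcal{W}_{\varepsilon}(u)=-\chi_{1}(\mathcal{J}_{\varepsilon}(u))\chi_{2}(\operatorname{dist}(u,\mathcal{N}_{\varepsilon}))\mathcal{V}_{\varepsilon}(u)$, where $\chi_{1}:\mathbb{R}\to[0,1]$ equals $1$ on $[c_{V_{0}}-\alpha_{\delta_{1}}/2,\widetilde{\mathcal{C}}_{\varepsilon}+\varepsilon/2]$ and $0$ off $[c_{V_{0}}-\alpha_{\delta_{1}},\widetilde{\mathcal{C}}_{\varepsilon}+\varepsilon]$, while $\chi_{2}:[0,\infty)\to[0,1]$ equals $1$ on $[0,d_{0}/2]$ and $0$ on $[3d_{0}/4,\infty)$. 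Extending $\mathcal{W}_{\varepsilon}$ by $0$ elsewhere yields a global bounded Lipschitz field on $H_{\varepsilon}$ whose flow $\eta_{s}$ is globally defined. Two properties are immediate: (i) $\eta_{s}$ leaves $H_{\varepsilon}\setminus\mathcal{N}_{\varepsilon}^{3d_{0}/4}$ pointwise fixed, so in particular $\|\eta_{s}(u)-u\|_{H_{\varepsilon}}\le s$ keeps any $u\in\mathcal{N}_{\varepsilon}^{d_{0}/2}$ inside $\mathcal{N}_{\varepsilon}^{d_{0}/2+s}\subset\mathcal{N}_{\varepsilon}^{d_{0}}$ as long as $s\le d_{0}/4$; (ii) $\frac{d}{ds}\mathcal{J}_{\varepsilon}(\eta_{s}(u))\le-(\mu_{0}/2)\chi_{1}\chi_{2}\le0$.

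The final step is to apply $\eta_{s_{0}}$ to $\gamma_{\varepsilon}$ for some small $s_{0}\in(0,d_{0}/4)$ so as to violate the minimax definition of $\widetilde{\mathcal{C}}_{\varepsilon}$. Split the interval $[0,1]$ into $T_{\mathrm{low}}=\{\tau:\mathcal{J}_{\varepsilon}(\gamma_{\varepsilon}(\tau))\le c_{V_{0}}-\alpha_{\delta_{1}}/2\}$ and $T_{\mathrm{high}}$ its complement. For $\tau\in T_{\mathrm{low}}$, continuity of the flow gives $\mathcal{J}_{\varepsilon}(\eta_{s_{0}}(\gamma_{\varepsilon}(\tau)))\le c_{V_{0}}-\alpha_{\delta_{1}}/4$ for $s_{0}$ small, which is well below $\widetilde{\mathcal{C}}_{\varepsilon}$. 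For $\tau\in T_{\mathrm{high}}$, Lemma~\ref{lem4-3} ensures $\gamma_{\varepsilon}(\tau)\in\mathcal{N}_{\varepsilon}^{M_{0}\delta_{1}}\subset\mathcal{N}_{\varepsilon}^{d_{0}/4}$, so throughout the flow $\chi_{2}\equiv1$ and property~(ii) integrates to $\mathcal{J}_{\varepsilon}(\eta_{s_{0}}(\gamma_{\varepsilon}(\tau)))\le\mathcal{J}_{\varepsilon}(\gamma_{\varepsilon}(\tau))-(\mu_{0}/2)s_{0}$; choosing $s_{0}\in(0,d_{0}/4)$ with $(\mu_{0}/2)s_{0}>\mathcal{D}_{\varepsilon}-\widetilde{\mathcal{C}}_{\varepsilon}+\varepsilon/2$ (possible because the right-hand side can be made arbitrarily small) yields $\mathcal{J}_{\varepsilon}(\eta_{s_{0}}(\gamma_{\varepsilon}(\tau)))<\widetilde{\mathcal{C}}_{\varepsilon}$. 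Since $\eta_{s_{0}}$ fixes the endpoints (both have $\mathcal{J}_{\varepsilon}<c_{V_{0}}-\alpha_{\delta_{1}}$) and keeps the path inside $\mathcal{N}_{\varepsilon}^{d_{0}}$, the new path $\eta_{s_{0}}\circ\gamma_{\varepsilon}$ lies in $\widetilde{\mathcal{A}}_{\varepsilon}$ and strictly lowers the max, contradicting the definition of $\widetilde{\mathcal{C}}_{\varepsilon}$.

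The main obstacle is precisely the bookkeeping for the double cutoff: one has to guarantee simultaneously that the deformed path never leaves $\mathcal{N}_{\varepsilon}^{d_{0}}$ (delicate, since $\mathcal{N}_{\varepsilon}$ is only sequentially compact, not convex or smooth) and that the lower bound on $\|\mathcal{J}_{\varepsilon}'\|$ is uniform in $\varepsilon$ on the region where $\chi_{1}\chi_{2}\neq0$. Lemmas~\ref{lem4-0-1}--\ref{lem4-3} are tailored to make both controls available, so once the two-variable cutoff is set up the contradiction follows by a standard minimax deformation.
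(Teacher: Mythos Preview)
Your argument is correct and follows essentially the same pseudo-gradient deformation scheme that the paper invokes by reference to Lemma~4.6 of \cite{HL1}: contradict the absence of a Palais--Smale sequence by building a cut-off descent flow (with one cut-off in energy, one in distance to $\mathcal{N}_{\varepsilon}$), then push $\gamma_{\varepsilon}$ below the minimax level while keeping it inside $\mathcal{N}_{\varepsilon}^{d_0}$, using Lemma~\ref{lem4-2} for the uniform gradient bound on the annulus and Lemma~\ref{lem4-3} to ensure the high-energy part of the path starts deep inside $\mathcal{N}_{\varepsilon}^{d_1}$. One small point worth tightening: when you pick $s_0$ via $(\mu_0/2)s_0>\mathcal{D}_{\varepsilon}-\widetilde{\mathcal{C}}_{\varepsilon}+\varepsilon/2$, note that $\mu_0$ contains the $\varepsilon$-dependent constant $a$, so the compatibility with $s_0<d_0/4$ is not automatic; the clean fix is to observe that any trajectory leaving $\mathcal{N}_{\varepsilon}^{d_1}$ must spend time at least $d_0/4$ in the annulus where the \emph{uniform} bound $\alpha_{d_1}$ applies, yielding an energy drop $\alpha_{d_1}d_0/8$ independent of $\varepsilon$, which beats $\mathcal{D}_{\varepsilon}-\widetilde{\mathcal{C}}_{\varepsilon}\to0$.
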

\begin{lemma}\label{lem4-5}
$\mathcal{J}_{\varepsilon}$ possesses a nontrivial critical point $u_{\varepsilon}\in\mathcal{N}_{\varepsilon}^{d_0}\cap\mathcal{J}_{\varepsilon}^{\mathcal{D}_{\varepsilon}+\varepsilon}$ for $\varepsilon\in(0,\bar{\varepsilon}]$.
\end{lemma}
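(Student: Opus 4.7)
The plan is to start from the Palais--Smale-type sequence $\{u_{\varepsilon,n}\}\subset\mathcal{N}_\varepsilon^{d_0}\cap\mathcal{J}_\varepsilon^{\widetilde{\mathcal{C}}_\varepsilon+\varepsilon}$ provided by Lemma \ref{lem4-4} and show that, after passing to a subsequence, it converges strongly in $H_\varepsilon$ to a nontrivial critical point of $\mathcal{J}_\varepsilon$ inside $\mathcal{N}_\varepsilon^{d_0}\cap\mathcal{J}_\varepsilon^{\mathcal{D}_\varepsilon+\varepsilon}$. The first step is to observe that Lemma \ref{lem4-0-1} makes $\mathcal{N}_\varepsilon$, hence $\mathcal{N}_\varepsilon^{d_0}$, uniformly bounded in $H_\varepsilon$, so $\{u_{\varepsilon,n}\}$ is bounded; extracting a subsequence, there exists $u_\varepsilon\in H_\varepsilon$ with $u_{\varepsilon,n}\rightharpoonup u_\varepsilon$ in $H_\varepsilon$, $u_{\varepsilon,n}\to u_\varepsilon$ in $L^p_{\mathrm{loc}}(\mathbb{R}^3)$ for $p\in[1,2_s^\ast)$, and almost everywhere.

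Next I would verify that $\mathcal{J}_\varepsilon'(u_\varepsilon)=0$. Fixing $\varphi\in C_0^\infty(\mathbb{R}^3)$, the quadratic part of $\langle\mathcal{J}_\varepsilon'(u_{\varepsilon,n}),\varphi\rangle$ passes to the limit by weak convergence in $H_\varepsilon$; the Hartree-type term by Lemma \ref{lem2-1}(iv); the nonlinearity $\int f(\varepsilon z,u_{\varepsilon,n})\varphi\,\mathrm{d}z$ by the growth bound \eqref{equ3-4}, local strong convergence, and dominated convergence on $\mathrm{supp}(\varphi)$; and the penalization derivative $\langle Q_\varepsilon'(u_{\varepsilon,n}),\varphi\rangle$ because the outer mass $\int_{\mathbb{R}^3\setminus\Lambda_\varepsilon}u_{\varepsilon,n}^2\,\mathrm{d}z$ converges and $\varphi$ is compactly supported.

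The main obstacle is upgrading weak to strong convergence in $H_\varepsilon$. I would exploit the proximity of $u_{\varepsilon,n}$ to $\mathcal{N}_\varepsilon$: by definition, each $u_{\varepsilon,n}$ lies within $d_0$ of some $\varphi_\varepsilon(\cdot-y_n/\varepsilon)W_n(\cdot-y_n/\varepsilon)$ with $y_n\in\mathcal{M}^\beta$ and $W_n\in\mathcal{L}_{V_0}$. Since $\mathcal{M}^\beta$ is compact and $\mathcal{L}_{V_0}$ is compact in $H^s(\mathbb{R}^3)$ with the uniform pointwise decay $W_n(x)\leq C(1+|x|^{3+2s})^{-1}$ (Proposition \ref{pro3-4}), the approximating family has uniform tail smallness, which at fixed $\varepsilon$ transfers to $\{u_{\varepsilon,n}\}$ up to an error controlled by $d_0$. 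Combining this uniform tail control with local strong convergence produces $u_{\varepsilon,n}\to u_\varepsilon$ in $L^r(\mathbb{R}^3)$ for every $r\in[2,2_s^\ast)$. Then testing $\langle\mathcal{J}_\varepsilon'(u_{\varepsilon,n})-\mathcal{J}_\varepsilon'(u_\varepsilon),u_{\varepsilon,n}-u_\varepsilon\rangle\to 0$, using Lemma \ref{lem2-1}(iv) on the Hartree piece, $(g_0)$--$(g_1)$ on $f(\varepsilon z,\cdot)$, and the monotonicity of the map $u\mapsto\int_{\mathbb{R}^3\setminus\Lambda_\varepsilon}u^2\,\mathrm{d}z$ on the penalization, isolates $\|u_{\varepsilon,n}-u_\varepsilon\|_{H_\varepsilon}^2=o(1)$.

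With strong convergence in hand, the remaining conclusions are routine: $\mathcal{N}_\varepsilon^{d_0}$ is closed in $H_\varepsilon$, so $u_\varepsilon\in\mathcal{N}_\varepsilon^{d_0}$; continuity of $\mathcal{J}_\varepsilon$ combined with $\widetilde{\mathcal{C}}_\varepsilon\leq\mathcal{D}_\varepsilon$ (the localized minimax value does not exceed the level achieved on the fixed path $\gamma_\varepsilon$) gives $\mathcal{J}_\varepsilon(u_\varepsilon)\leq\mathcal{D}_\varepsilon+\varepsilon$; and $u_\varepsilon$ is nontrivial since, by Proposition \ref{pro3-2}(i), $\mathcal{N}_\varepsilon$ is bounded away from $0$ in $H_\varepsilon$, so taking $d_0$ small keeps the entire $d_0$-tube away from the origin. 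The hardest part is indeed the uniform tail control, where the nonlocality of $(-\Delta)^s$ rules out a naive localization argument and one must lean on the algebraic decay of the profiles $W\in\mathcal{L}_{V_0}$ coming from Proposition \ref{pro3-4}(ii).
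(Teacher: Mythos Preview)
Your overall plan is sound, but the strong-convergence step contains a genuine gap. Proximity to the compact set $\mathcal{N}_\varepsilon$ only gives
\[
\limsup_{n\to\infty}\int_{|z|\geq R}\bigl(|D_su_{\varepsilon,n}|^2+V(\varepsilon z)u_{\varepsilon,n}^2\bigr)\,\mathrm{d}z\leq C d_0^2
\]
for $R$ large (the profiles $W_\varepsilon^{y_n}$ are compactly supported, so their tails vanish, but the distance $d_0$ remains). This fixed positive error does not disappear, so you cannot conclude $u_{\varepsilon,n}\to u_\varepsilon$ in $L^r(\mathbb{R}^3)$; at best you obtain $\limsup_n\|u_{\varepsilon,n}-u_\varepsilon\|_{L^r}\leq C d_0$. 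Feeding this into $\langle\mathcal{J}_\varepsilon'(u_{\varepsilon,n})-\mathcal{J}_\varepsilon'(u_\varepsilon),u_{\varepsilon,n}-u_\varepsilon\rangle\to 0$ then yields only $\limsup_n\|u_{\varepsilon,n}-u_\varepsilon\|_{H_\varepsilon}^2\leq C d_0^\alpha$ for some $\alpha>0$, which is still not zero. Since $d_0$ is fixed once and for all by Lemma~\ref{lem4-1}, you cannot send it to zero afterwards.

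The paper closes this gap by a different route that exploits the del Pino--Felmer structure of the penalized nonlinearity rather than the geometry of $\mathcal{N}_\varepsilon^{d_0}$. One tests the almost-critical-point relation against $\psi_\rho u_{\varepsilon,n}$, where $\psi_\rho$ is a smooth cutoff equal to $1$ outside $B_{2\rho}$ and $0$ on $B_\rho$, with $\rho$ large enough that $\Lambda_\varepsilon\subset B_\rho$. On the support of $\psi_\rho$ one has $f(\varepsilon z,\tau)\tau=\tilde g(\tau)\tau\leq\frac{V_0}{k}\tau^2\leq\frac{1}{k}V(\varepsilon z)\tau^2$ by $(f_3)$, the Hartree term and $\langle Q_\varepsilon'(u_{\varepsilon,n}),\psi_\rho u_{\varepsilon,n}\rangle$ are nonnegative, and the nonlocal cross term is controlled by $\|D_s\psi_\rho\|_{L^\infty}\leq C\rho^{-s}$. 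This yields the genuine uniform tail estimate
\[
\int_{|z|\geq 2\rho}\bigl(|D_su_{\varepsilon,n}|^2+V(\varepsilon z)u_{\varepsilon,n}^2\bigr)\,\mathrm{d}z\leq \frac{C}{\rho^s},
\]
which \emph{does} vanish as $\rho\to\infty$ uniformly in $n$, and strong convergence in $L^2$, then $L^r$, then $H_\varepsilon$ follows. Contrary to your closing remark, the nonlocality does not rule out this localization; it only introduces the commutator term estimated above. A minor side point: the nontriviality of $u_\varepsilon$ comes from $0\notin\mathcal{N}_\varepsilon^{d_0}$ (the profiles $W\in\mathcal{L}_{V_0}$ are uniformly bounded away from zero by compactness), not from Proposition~\ref{pro3-2}(i), which concerns the unrelated Nehari--Pohozaev manifold $\mathcal{M}_\mu$.
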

\begin{proof}
By Lemma \ref{lem4-4}, there exists $\bar{\varepsilon}>0$ such that for each $\varepsilon\in(0,\bar{\varepsilon}]$, there exists a sequence $\{u_{\varepsilon,n}\}\subset \mathcal{J}_{\varepsilon}^{\mathcal{D}_{\varepsilon}+\varepsilon}\cap\mathcal{N}_{\varepsilon}^{d_0}$ such that $\mathcal{J}_{\varepsilon_n}'(u_{\varepsilon,n})\rightarrow0$ as $n\rightarrow\infty$ in $(H_{\varepsilon})'$. Since $\mathcal{N}_{\varepsilon}^{d_0}$ is bounded, then $\{u_{\varepsilon,n}\}$ is bounded in $H_{\varepsilon}$ and up to a subsequence, we may assume that there exists $u_{\varepsilon}\in H_{\varepsilon}$ such that $u_{\varepsilon,n}\rightharpoonup u_{\varepsilon}$ in $H_{\varepsilon}$, $u_{\varepsilon,n}\rightarrow u_{\varepsilon}$ in $L_{loc}^p(\mathbb{R}^3)$ for $1\leq p<2_s^{\ast}$ and $u_{\varepsilon,n}\rightarrow u_{\varepsilon}$ a.e. in $\mathbb{R}^3$.

We claim that
\begin{equation}\label{equ4-26}
\lim_{R\rightarrow\infty}\sup_{n\geq1}\int_{|x|\geq R}(|D_su_{\varepsilon,n}|^2+V(\varepsilon z)|u_{\varepsilon,n}|^2)\,{\rm d}z=0.
\end{equation}

Indeed, Choosing a cutoff function $\psi_{\rho}\in C^{\infty}(\mathbb{R}^3)$ such that $\psi_{\rho}(z)=1$ on $\mathbb{R}^3\backslash B_{2\rho}(0)$, $\psi_{\rho}(z)=0$ on $B_{\rho}(0)$, $0\leq\psi_{\rho}\leq1$ and $|\nabla \psi_{\rho}|\leq\frac{C}{\rho}$. Since $\psi_{\rho}u_{\varepsilon,n}\in H_{\varepsilon}$, then $\langle\mathcal{J}_{\varepsilon_n}'(u_{\varepsilon,n}),\psi_{\rho}u_{\varepsilon,n}\rangle\rightarrow0$ as $n\rightarrow\infty$. Thus, for sufficiently large $\rho$ such that $\Lambda_{\varepsilon}\subset B_{\rho}(0)$, we have
\begin{align*}
&\int_{\mathbb{R}^3}(|D_su_{\varepsilon,n}|^2+V(\varepsilon z)|u_{\varepsilon,n}|^2)\psi_{\rho}\,{\rm d}z+\int_{\mathbb{R}^3}\int_{\mathbb{R}^3}\frac{(u_{\varepsilon,n}(z)-u_{\varepsilon,n}(y))(\psi_{\rho}(z)-\psi_{\rho}(y))u_{\varepsilon, n}(y)}{|z-y|^{3+2s}}\,{\rm d}y\,{\rm d}z\\
&=\int_{\mathbb{R}^3}f(\varepsilon z,u_{\varepsilon,n})u_{\varepsilon,n}\psi_{\rho}\,{\rm d}z-\int_{\mathbb{R}^3}\phi_{u_{\varepsilon,n}}^t|u_{\varepsilon,n}|^2\psi_{\rho}\,{\rm d}z-4\Big(\int_{\mathbb{R}^3\backslash\Lambda_{\varepsilon}}|u_{\varepsilon,n}|^2\,{\rm d}z-\varepsilon\Big)_{+}\int_{\mathbb{R}^3\backslash\Lambda_{\varepsilon}}|u_{\varepsilon,n}|^2\psi_{\rho}\,{\rm d}z\\
&\leq\int_{\mathbb{R}^3}f(\varepsilon z,u_{\varepsilon,n})u_{\varepsilon,n}\psi_{\rho}\,{\rm d}z\leq\frac{V_0}{k}\int_{\mathbb{R}^3}|u_{\varepsilon,n}|^2\psi_{\rho}\,{\rm d}z.
\end{align*}
In view of the fact that $|D_s\psi_{\rho}|^2\leq\frac{C}{\rho^{2s}}$ for any $z\in\mathbb{R}^3$ and H\"{o}lder's inequality, we deduce that
\begin{align*}
&\int_{\mathbb{R}^3}\int_{\mathbb{R}^3}\frac{(u_{\varepsilon,n}(z)-u_{\varepsilon,n}(y))(\psi_{\rho}(z)-\psi_{\rho}(y))u_{\varepsilon, n}(y)}{|z-y|^{3+2s}}\,{\rm d}y\,{\rm d}z\\
&\leq\Big(\int_{\mathbb{R}^3}|D_su_{\varepsilon,n}|^2\,{\rm d}z\Big)^{\frac{1}{2}}\Big(\int_{\mathbb{R}^3}|D_s\psi_R|^2|u_{\varepsilon,n}|^2\,{\rm d}z\Big)^{\frac{1}{2}}\leq \frac{C}{\rho^s}\|u_{\varepsilon,n}\|_2\leq\frac{C}{\rho^s}.
\end{align*}
Therefore, from the estimates above, we obtain
\begin{equation*}
\int_{\mathbb{R}^3\backslash B_{2\rho}(0)}(|D_su_{\varepsilon,n}|^2+V(\varepsilon z)|u_{\varepsilon,n}|^2)\,{\rm d}z\leq\frac{C}{\rho^s}.
\end{equation*}
Thus, the claim follows. From \eqref{equ4-26}, we see that $u_{\varepsilon,n}\rightarrow u_{\varepsilon}$ in $L^2(\mathbb{R}^3)$. By use of interpolation inequality, we conclude that $u_{\varepsilon,n}\rightarrow u_{\varepsilon}$ in $L^p(\mathbb{R}^3)$ for $2\leq p<2_s^{\ast}$. It follows from standard arguments that $u_{\varepsilon,n}\rightarrow u_{\varepsilon}$ in $H_{\varepsilon}$. Since $0\not\in\mathcal{N}_{\varepsilon}^{d_0}$, $u_{\varepsilon}\neq0$ and $u_{\varepsilon}\in\mathcal{N}_{\varepsilon}^{d_0}\cap\mathcal{J}_{\varepsilon}^{\mathcal{D}_{\varepsilon}+\varepsilon}$. The proof is completed.
\end{proof}

\section{Proof of Theorem \ref{thm1-1}.}

From Lemma \ref{lem4-5}, we see that there exists $\bar{\varepsilon}>0$ and $d_0>0$ such that for each $\varepsilon\in(0,\bar{\varepsilon}]$, $u_{\varepsilon}\in\mathcal{N}_{\varepsilon}^{d_0}\cap\mathcal{J}_{\varepsilon}^{\mathcal{D}_{\varepsilon}+\varepsilon}$ is a nontrivial solution of problem
\begin{equation}\label{equ5-1}
(-\Delta)^{s}u+V(\varepsilon z)u+\phi_u^tu+4\Big(\int_{\mathbb{R}^3\backslash\Lambda_{\varepsilon}}u^2\,{\rm d}z-\varepsilon\Big)_{+}\chi_{\mathbb{R}^3\backslash\Lambda_{\varepsilon}}u=f(\varepsilon z,u)\quad \text{in}\,\, \mathbb{R}^3,
\end{equation}
where $\chi_{\mathbb{R}^3\backslash\Lambda_{\varepsilon}}$ is the characterization function of the set $\mathbb{R}^3\backslash\Lambda_{\varepsilon}$. Taking $-u_{\varepsilon}^{-}$ as a test function in \eqref{equ5-1}, we can deduce that $u_{\varepsilon}\geq0$.
Since $u_{\varepsilon}\in\mathcal{N}_{\varepsilon}^{d_0}\cap\mathcal{J}_{\varepsilon}^{\mathcal{D}_{\varepsilon}+\varepsilon}$, by Lemma \ref{lem4-0-1}, we get that $\{u_{\varepsilon}\}$ is uniformly bounded in $\varepsilon\in(0,\bar{\varepsilon}]$ and $\{\mathcal{J}_{\varepsilon}(u_{\varepsilon})\}$ is uniformly bounded from above for all $\varepsilon>0$ small. Thus, it is easy to check that $\{Q_{\varepsilon}(u_{\varepsilon})\}$ uniformly bounded for all $\varepsilon>0$ small.

{\bf Step 1.} We claim that there exists $C>0$ such that for any $\varepsilon\in(0,\bar{\varepsilon}]$
\begin{equation}\label{equ5-2}
\|u_{\varepsilon}\|_{L^{\infty}(\mathbb{R}^3)}\leq C.
\end{equation}
It suffices to prove that for any $\{\varepsilon_i\}$ satisfying $\varepsilon_i\rightarrow0$, there holds $\|u_{\varepsilon_i}\|_{L^{\infty}(\mathbb{R}^3)}\leq C$, where $C>0$ is a positive constant. For a sequence $\varepsilon_i\rightarrow0$, there is a corresponding sequence $\{u_{\varepsilon_i}\}$ satisfying $u_{\varepsilon_i}\in\mathcal{N}_{\varepsilon_i}^{d_0}\cap\mathcal{J}_{\varepsilon_i}^{\mathcal{D}_{\varepsilon_i}+\varepsilon_i}$ and $\mathcal{J}_{\varepsilon_i}'(u_{\varepsilon_i})=0$. In view of Lemma \ref{lem4-0-3}, we see that $\{u_{\varepsilon_i}\}$ satisfies the condition of Lemma \ref{lem4-1}. Hence
there exist $x_0\in\mathcal{M}$ and $W_0\in\mathcal{L}_{V_0}$ satisfying
\begin{equation*}
\varepsilon_ix_{\varepsilon_i}\rightarrow x_0\quad \text{and}\quad \|u_{\varepsilon_i}-\varphi_{\varepsilon_i}(\cdot-x_{\varepsilon_i})W_0(\cdot-x_{\varepsilon_i})\|_{H_{\varepsilon_i}}\rightarrow0
\end{equation*}
as $i\rightarrow\infty$. Thus
\begin{equation*}
\lim_{i\rightarrow\infty}\|u_{\varepsilon_i}(\cdot+x_{\varepsilon_i})-W_0\|\leq\lim_{i\rightarrow\infty}\|u_{\varepsilon_i}-\varphi_{\varepsilon_i}(\cdot-x_{\varepsilon_i})W_0(\cdot-x_{\varepsilon_i})\|_{H_{\varepsilon_i}}+\lim_{i\rightarrow\infty}\|(1-\varphi_{\varepsilon_i})W_0\|=0.
\end{equation*}
By Lemma \ref{lem2-1-0}, we conclude that $\|u_{\varepsilon_i}(\cdot+x_{\varepsilon_i})\|_{L^{\infty}(\mathbb{R}^3)}\leq C$ and the claim holds true.

{\bf Step 2.} For any sequence $\{\varepsilon_i\}$ with $\varepsilon_i\rightarrow0$, by Lemma \ref{lem4-1}, there exist, up to a subsequence, $\{x_{\varepsilon_i}\}\subset\mathbb{R}^3$, $x_0\in\mathcal{M}$, $W_0\in\mathcal{L}_{V_0}$ such that
\begin{equation*}
\varepsilon_ix_{\varepsilon_i}\rightarrow x_0\quad \text{and}\quad \|u_{\varepsilon_i}-\varphi_{\varepsilon_i}(\cdot-x_{\varepsilon_i})W_0(\cdot-x_{\varepsilon_i})\|_{H_{\varepsilon_i}}\rightarrow0
\end{equation*}
which implies that
\begin{equation}\label{equ5-3}
w_{\varepsilon_i}(z):=u_{\varepsilon_i}(z+x_{\varepsilon_i})\rightarrow W_0\quad \text{in}\,\, H^s(\mathbb{R}^3).
\end{equation}
By \eqref{equ5-2}, we see that $w_{\varepsilon_i}\rightarrow W_0$ in $L^p(\mathbb{R}^3)$ for $1\leq p<\infty$.

Now, setting
\begin{align*}
h_{\varepsilon_i}(z)=w_{\varepsilon_i}(z)&+f(\varepsilon_i z+\varepsilon_i x_{\varepsilon_i},w_{\varepsilon_i}(z))-\Big[V(\varepsilon_i z+\varepsilon_i x_{\varepsilon_i})w_{\varepsilon_i}(z)+\phi_{w_{\varepsilon_i}}^t(z)w_{\varepsilon_i}(z)\\
&+4\Big(\int_{\mathbb{R}^3\backslash\Lambda_{\varepsilon_i}-x_{\varepsilon_i}}w_{\varepsilon_i}^2\,{\rm d}z-\varepsilon_i\Big)_{+}\chi_{\mathbb{R}^3\backslash\Lambda_{\varepsilon_i}-x_{\varepsilon_i}}(z)w_{\varepsilon_i}(z)\Big].
\end{align*}
Clearly, in view of the uniformly boundedness of $Q_{\varepsilon_i}(w_{\varepsilon_i})$ and \eqref{equ5-2}, thus there exists $C>0$ such that $|h_{\varepsilon_i}(z)|\leq C$ for any $z\in\mathbb{R}^3$ and $i\in\mathbb{N}$. By \eqref{equ5-2} and \eqref{equ5-3}, we have that
\begin{align*}
\int_{\mathbb{R}^3}\chi_{\mathbb{R}^3\backslash\Lambda_{\varepsilon_i}-x_{\varepsilon_i}}(z)w_{\varepsilon_i}(z)\,{\rm d}z&\leq\int_{\mathbb{R}^3}|w_{\varepsilon_i}-W_0|\,{\rm d}z+\int_{\mathbb{R}^3}\chi_{\mathbb{R}^3\backslash\Lambda_{\varepsilon_i}-x_{\varepsilon_i}}(z)W_0(z)\,{\rm d}z\\
&=\int_{\mathbb{R}^3\backslash(\Lambda_{\varepsilon_i}-x_{\varepsilon_i})}W_0(z)\,{\rm d}z+o(1)\\
&\leq\int_{\mathbb{R}^3\backslash B_{\beta/\varepsilon_i}(0)}W_0(z)\,{\rm d}z+o(1)\\
&\rightarrow0\quad \text{as}\,\, i\rightarrow\infty.
\end{align*}
Therefore, $h_{\varepsilon_i}\rightarrow h$ in $L^q(\mathbb{R}^3)$ for $1\leq q<\infty$, where $h(z)=W_0(z)+g(W_0)-V(x_0)W_0-\phi_{W_0}^tW_0$.
We rewrite the equation \eqref{equ5-1} as
\begin{equation*}
(-\Delta)^sw_{\varepsilon_i}+w_{\varepsilon_i}=h_{\varepsilon_i}\quad z\in\mathbb{R}^3.
\end{equation*}
According to the arguments in \cite{FQT}, we see that
\begin{equation*}
w_{\varepsilon_i}(z)=\int_{\mathbb{R}^3}\mathcal{K}(z-y)h_{\varepsilon_i}(y)\,{\rm d}y\quad z\in\mathbb{R}^3,
\end{equation*}
where $\mathcal{K}$ is a Bessel potential, which possesses the following properties:\\
$(\mathcal{K}_1)$ $\mathcal{K}$ is positive, radially symmetric and smooth in $\mathbb{R}^3\backslash\{0\}$;\\
$(\mathcal{K}_2)$ there exists a constant $C>0$ such that $\mathcal{K}(x)\leq \frac{C}{|x|^{3+2s}}$ for all $x\in\mathbb{R}^3\backslash\{0\}$;\\
$(\mathcal{K}_3)$ $\mathcal{K}\in L^\tau(\mathbb{R}^3)$ for $\tau\in[1,\frac{3}{3-2s})$.

We define two sets $A_{\delta}=\{y\in\mathbb{R}^3\,\,|\,\, |z-y|\geq\frac{1}{\delta}\}$ and $B_{\delta}=\{y\in\mathbb{R}^3\,\,|\,\, |z-y|<\frac{1}{\delta}\}$.
Hence,
\begin{equation*}
0\leq w_{\varepsilon_i}(z)\leq \int_{\mathbb{R}^3}\mathcal{K}(z-y)|h_{\varepsilon_i}(y)|\,{\rm d}y=\int_{A_{\delta}}\mathcal{K}(z-y)|h_{\varepsilon_i}(y)|\,{\rm d}y+\int_{B_{\delta}}\mathcal{K}(z-y)|h_{\varepsilon_i}(y)|\,{\rm d}y.
\end{equation*}
From the definition of $A_{\delta}$ and $(\mathcal{K}_2)$, we have that for all $n\in\mathbb{N}$,
\begin{equation*}
\int_{A_{\delta}}\mathcal{K}(z-y)|h_{\varepsilon_i}(y)|\,{\rm d}y\leq C\delta^s\|h_{\varepsilon_i}\|_{\infty}\int_{A_{\delta}}\frac{1}{|z-y|^{3+s}}\,{\rm d}y\leq C\delta^s\int_{A_{\delta}}\frac{1}{|z-y|^{3+s}}\,{\rm d}y:=C\delta^{2s}.
\end{equation*}

On the other hand, by H\"{o}lder's inequality and $(\mathcal{K}_3)$, we deduce that
\begin{align*}
&\int_{B_{\delta}}\mathcal{K}(z-y)|h_{\varepsilon_i}(y)|\,{\rm d}y\leq\int_{B_{\delta}}\mathcal{K}(z-y)|h_{\varepsilon_i}-h|\,{\rm d}y+\int_{B_{\delta}}\mathcal{K}(z-y)|h|\,{\rm d}y\\
&\leq\Big(\int_{B_{\delta}}\mathcal{K}^{\frac{6}{3+2s}}\,{\rm d}y\Big)^{\frac{3+2s}{6}}\Big(\int_{B_{\delta}}|h_{\varepsilon_i}-h|^{\frac{6}{3-2s}}\,{\rm d}y\Big)^{\frac{3-2s}{6}}+
\Big(\int_{B_{\delta}}\mathcal{K}^{\frac{6}{3+2s}}\,{\rm d}y\Big)^{\frac{3+2s}{6}}\Big(\int_{B_{\delta}}|h|^{\frac{6}{3-2s}}\,{\rm d}y\Big)^{\frac{3-2s}{6}}\\
&\leq\Big(\int_{\mathbb{R}^3}\mathcal{K}^{\frac{6}{3+2s}}\,{\rm d}y\Big)^{\frac{3+2s}{6}}\Big(\int_{\mathbb{R}^3}|h_{\varepsilon_i}-h|^{\frac{6}{3-2s}}\,{\rm d}y\Big)^{\frac{3-2s}{6}}+
\Big(\int_{\mathbb{R}^3}\mathcal{K}^{\frac{6}{3+2s}}\,{\rm d}y\Big)^{\frac{3+2s}{6}}\Big(\int_{B_{\delta}}|h|^{\frac{6}{3-2s}}\,{\rm d}y\Big)^{\frac{3-2s}{6}},
\end{align*}
where we have used the fact that $\frac{6}{3+2s}<\frac{3}{3-2s}$.

Since $\Big(\int_{B_{\delta}}|h|^{\frac{6}{3-2s}}\,{\rm d}y\Big)^{\frac{3-2s}{6}}\rightarrow0$ as $|z|\rightarrow+\infty$, thus, we deduce that there exist $i_0\in\mathbb{N}$ and $R_0>0$ independence of $\delta>0$ such that
\begin{equation*}
\int_{B_{\delta}}\mathcal{K}(z-y)|h_{\varepsilon_i}(y)|\,{\rm d}y\leq \delta,\quad \forall i\geq i_0\quad \text{and}\quad |z|\geq R_0.
\end{equation*}
Hence,
\begin{equation*}
\int_{\mathbb{R}^3}\mathcal{K}(z-y)|h_{\varepsilon_i}(y)|\,{\rm d}y\leq C\delta^{2s}+\delta,\quad \forall i\geq i_0\quad \text{and}\quad |z|\geq R_0.
\end{equation*}
For each $i\in\{1,2,\cdots,i_0-1\}$, there exists $R_i>0$ such that $\Big(\int_{B_{\delta}}|h_{\varepsilon_i}|^{\frac{6}{3-2s}}\,{\rm d}y\Big)^{\frac{3-2s}{6}}< \delta$ as $|z|\geq R_i$. Thus, for $|z|\geq R_i$, we have that
\begin{align*}
\int_{\mathbb{R}^3}\mathcal{K}(z-y)|h_{\varepsilon_i}(y)|\,{\rm d}y&\leq C\delta^{2s}+\int_{B_{\delta}}\mathcal{K}(z-y)|h_{\varepsilon_i}(y)|\,{\rm d}y\\
&\leq C\delta^{2s}+\|\mathcal{K}\|_{\frac{6}{3+2s}}\Big(\int_{B_{\delta}}|h_{\varepsilon_i}|^{\frac{6}{3-2s}}\,{\rm d}y\Big)^{\frac{3-2s}{6}}\leq C(\delta^{2s}+\delta)
\end{align*}
for each $i\in\{1,2,\cdots,i_0-1\}$.
Therefore, taking $R=\max\{R_0,R_1,\cdots, R_{i_0-1}\}$, we infer that for any $i\in\mathbb{N}$, there holds
\begin{equation*}
0\leq w_{\varepsilon_i}(z)\leq\int_{\mathbb{R}^3}\mathcal{K}(z-y)|h_{\varepsilon_i}(y)|\,{\rm d}y\leq C\delta^{2s}+\delta,\quad \text{for all}\quad |z|\geq R
\end{equation*}
which implies that $\lim\limits_{|z|\rightarrow\infty}w_{\varepsilon_i}(z)=0$ uniformly in $i\in\mathbb{N}$.

Similar arguments to the proof of $(ii)$ of Proposition \ref{pro3-4}, we see that for any $i\in\mathbb{N}$ but fixed, there exists $C>0$ independent of $\varepsilon_i>0$ such that
\begin{equation*}
0\leq w_{\varepsilon_i}(z)\leq\frac{C}{1+|z|^{3+2s}} \quad \text{for any}\,\, z\in\mathbb{R}^3.
\end{equation*}
Therefore,
\begin{align*}
\int_{\mathbb{R}^3\backslash\Lambda_{\varepsilon_i}}w_{\varepsilon_i}^2\,{\rm d}z\leq C\int_{\mathbb{R}^3\backslash B_{R/\varepsilon_i}(0)}\frac{1}{(1+|z|^{3+2s})^2}\,{\rm d}z\leq C\varepsilon_i^{4s+3}
\end{align*}
which implies that $Q_{\varepsilon_i}(z)=0$ for $\varepsilon_i>0$ small. Hence $w_{\varepsilon_i}$ is a solution of the following problem
\begin{equation*}
(-\Delta)^{s}u+V(\varepsilon z)u+\phi_u^tu=f(\varepsilon z,u)\quad \text{in}\,\, \mathbb{R}^3.
\end{equation*}

{\bf Step 3.} For the $w_{\varepsilon}$ above in Step 2, $u_{\varepsilon}(z)=w_{\varepsilon}(z-x_{\varepsilon})<a$, for all $z\in\mathbb{R}^3\backslash\Lambda_{\varepsilon}$.

Noting that $\varepsilon x_{\varepsilon}\rightarrow x_0$ and $x_0\in\Lambda$. Thus, there exists $R'>0$ such that $B_{R'}(\varepsilon x_{\varepsilon})\subset\Lambda$ for $\varepsilon>0$ small. Hence, $B_{R'/\varepsilon}(x_{\varepsilon})\subset\Lambda_{\varepsilon}$ for $\varepsilon>0$ small. Moreover, by Step 2, there is $R_1>0$ such that $w_{\varepsilon}(z)<a$ for $|z|\geq R_1$. Thus,
\begin{equation*}
u_{\varepsilon}(z)=w_{\varepsilon}(z-x_{\varepsilon})<a,\quad \text{for all}\,\,z\in \mathbb{R}^3\backslash B_{R_1}(x_{\varepsilon})\quad \text{and}\,\, \varepsilon>0\,\,\text{small}.
\end{equation*}
Since
\begin{equation*}
\mathbb{R}^3\backslash\Lambda_{\varepsilon}\subset\mathbb{R}^3\backslash B_{R'/\varepsilon}(x_{\varepsilon})\subset\mathbb{R}^3\backslash B_{R_1}(x_{\varepsilon})\quad \text{and}\,\, \varepsilon>0\,\,\text{small}
\end{equation*}
and then
\begin{equation*}
u_{\varepsilon}(z)=w_{\varepsilon}(z-x_{\varepsilon})<a\quad \forall z\in\mathbb{R}^3\backslash\Lambda_{\varepsilon}\quad \text{and}\,\, \varepsilon>0\,\,\text{small}.
\end{equation*}

{\bf Step 4.} By Lemma \ref{lem4-5}, we see that problem \eqref{equ5-1} has a nonnegative solution $v_{\varepsilon}$ for all $\varepsilon\in(0,\bar{\varepsilon}]$. From Step 3, there exists $\varepsilon_0>0$ such that
\begin{equation*}
v_{\varepsilon }(z)<a\quad \forall z\in\mathbb{R}^3\backslash\Lambda_{\varepsilon}\,\,\text{and}\,\, \varepsilon\in(0,\varepsilon_0)
\end{equation*}
which implies that $f(\varepsilon z,v_{\varepsilon})=g(v_{\varepsilon})$. Thus, $v_{\varepsilon}$ is a solution of problem
\begin{equation}\label{equ5-4}
(-\Delta)^sv+V(\varepsilon z) v+ \phi_{v}^t v=g(v)\quad z\in\mathbb{R}^3.
\end{equation}
for all $\varepsilon\in(0,\varepsilon_0)$. Let $u_{\varepsilon}(x)=v_{\varepsilon}(x/\varepsilon)$ for every $\varepsilon\in(0,\varepsilon_0)$, it follows that $u_{\varepsilon}$ must be a solution to original problem \eqref{main} for $\varepsilon\in(0,\varepsilon_0)$.

If $y_{\varepsilon}$ denotes a global maximum point of $v_{\varepsilon}$, then
\begin{equation}\label{equ5-5}
v_{\varepsilon}(y_{\varepsilon})\geq a\quad \forall\, \varepsilon\in(0,\varepsilon_0).
\end{equation}
Suppose that $v_{\varepsilon}(y_{\varepsilon})<a$, taking $v_{\varepsilon}$ as a text function for \eqref{equ5-4}, we get
\begin{align*}
V_0\int_{\mathbb{R}^3}v_{\varepsilon}^2\,{\rm d}z&\leq\int_{\mathbb{R}^3}V(\varepsilon z)v_{\varepsilon}^2\,{\rm d}z\leq\int_{\mathbb{R}^3}g(v_{\varepsilon})v_{\varepsilon}\,{\rm d}z\\
&=\int_{\mathbb{R}^3}v_{\varepsilon}^2\frac{g(v_{\varepsilon})}{v_{\varepsilon}^{q-1}}v_{\varepsilon}^{q-2}\,{\rm d}z\\
&\leq\int_{\mathbb{R}^3}v_{\varepsilon}^2\frac{g(a)}{a}\,{\rm d}z=\frac{V_0}{k}\int_{\mathbb{R}^3}v_{\varepsilon}^2\,{\rm d}z
\end{align*}
which we have used the hypothesis $(g_3)$ and $q-2>0$. Hence we get a contradiction owing to the choosing $k>2$. In view of Step 3, we see that $\{y_{\varepsilon}\}$ is bounded for $\varepsilon\in(0,\varepsilon_0)$.

In what follows, setting $z_{\varepsilon}=\varepsilon y_{\varepsilon}+\varepsilon x_{\varepsilon}$, where $\{x_{\varepsilon}\}$ is given in Step 2. Since $u_{\varepsilon}(x)=v_{\varepsilon}(\frac{x}{\varepsilon}-x_{\varepsilon})$, then $z_{\varepsilon}$ is a global maximum point of $u_{\varepsilon}$ and $u_{\varepsilon}(z_{\varepsilon})\geq a$ for all $\varepsilon\in(0,\varepsilon_0)$.

Now, we claim that $\lim\limits_{\varepsilon\rightarrow0^{+}}V(z_{\varepsilon})=V_0$. Indeed, if the above limit does not hold, there is $\varepsilon_n\rightarrow0^{+}$ and $\gamma_0>0$ such that
\begin{equation}\label{equ5-6}
V(z_{\varepsilon_n})\geq V_0+\gamma_0\quad \forall n\in\mathbb{N}.
\end{equation}

By Step 2, we know that $\lim\limits_{|z|\rightarrow\infty}v_{\varepsilon_n}(z)=0$ uniformly in $n\in\mathbb{N}$. From \eqref{equ5-5}, thus $\{z_{\varepsilon_n}\}$ is a bounded sequence. Using Lemma \ref{lem4-1}, we know that there is $x_0\in\mathcal{M}$ such that $V(x_0)=V_0$ and $\varepsilon_nx_{\varepsilon_n}\rightarrow x_0$. Hence, $z_{\varepsilon_n}=\varepsilon_n x_{\varepsilon_n}+\varepsilon_n y_{\varepsilon_n}\rightarrow x_0$ which implies that $V(z_{\varepsilon_n})\rightarrow V(x_0)=V_0$ contradicting with \eqref{equ5-6}.

To complete the proof, we only need to prove the decay properties of $u_{\varepsilon}$. Similar argument to the proof of Proposition \ref{pro3-4}, we can obtain that
\begin{equation*}
0<v_{\varepsilon}(z)\leq \frac{C}{1+|z|^{3+2s}}.
\end{equation*}
Thus, by the boundedness of $\{y_{\varepsilon}\}$, i.e., there exists $C_0>0$ such that $|y_{\varepsilon}|\leq C_0$, we have
\begin{align*}
u_{\varepsilon}(x)=v_{\varepsilon}(\frac{x}{\varepsilon}-x_{\varepsilon})\leq \frac{C}{1+|\frac{x-z_{\varepsilon}+\varepsilon y_{\varepsilon}}{\varepsilon}|^{3+2s}}&\leq\frac{ C\varepsilon^{3+2s}}{\varepsilon^{3+2s}(1-C_0^{3+2s})+|x-z_{\varepsilon}|^{3+2s}}\\
&:=\frac{ C\varepsilon^{3+2s}}{\varepsilon^{3+2s}C_1+|x-z_{\varepsilon}|^{3+2s}}.
\end{align*}

{\bf Acknowledgements.}
The work is supported by NSFC grant 11501403 and fund program for the Scientific Activities of Selected Returned Overseas Professionals in Shanxi Province (2018).

\end{document}